\documentclass[reqno,11pt,a4paper]{amsart}
\usepackage{amssymb,amsmath,amsfonts,amssymb}%numbysec}
\usepackage{hyperref}
\usepackage{enumerate}
\usepackage{dsfont}
\usepackage{verbatim}
-\marginparwidth \oddsidemargin -9mm \evensidemargin \oddsidemargin
\usepackage{latexsym}
\advance\hoffset by 5mm

\def\@abssec#1{\vspace{.05in}\footnotesize \parindent .2in
{\bf #1. }\ignorespaces}

\newtheorem{theorem}{Theorem}[section]

\newtheorem{lemma}[theorem]{Lemma}
\newtheorem{proposition}[theorem]{Proposition}

\newtheorem{definition}[theorem]{Definition}
\newtheorem{remark}[theorem]{Remark}

\DeclareMathOperator{\divg}{div}

\newcommand{\R}{\ensuremath{\mathbb{R}}}

\newcommand{\WW}{\ensuremath{\mathcal{W}}}
\newcommand{\BB}{\ensuremath{\widetilde{\mathcal{B}}}}
\newcommand{\bb}{\ensuremath{\mathcal{B}}}
\newcommand{\cc}{\ensuremath{\mathcal{C}}}

\newcommand{\Z}{\ensuremath{\mathbb{Z}}}
\newcommand{\N}{\ensuremath{\mathbb{N}}}

\newcommand{\dd}{\mathrm{d}}
\newcommand{\DD}{\ensuremath{\mathbb{R}}^3}
\newcommand{\PPr}{\mathrm{Pr}}
\allowdisplaybreaks \numberwithin{equation}{section}

\begin{document}
\title[Temperature patches for 3D Boussinesq system with large Prandtl number]
{Global regularity of temperature patches for the 3D non-diffusive Boussinesq system with large Prandtl number}
\author{Qianyun Miao}
\address{School of Mathematics and Statistics, Beijing Institute of Technology, Beijing 100081, P. R. China}
\email{qianyunm@bit.edu.cn}
\author{Jiakun Yang}
\address{School of Mathematical Sciences, Laboratory of Mathematics and Complex Systems (MOE), 
Beijing Normal University, Beijing 100875, P.R. China}
\email{yangjk@mail.bnu.edu.cn}

\date{\today}
\thanks{Q. Miao is supported by National Key R$\&$D Program of China (No. 2024YFA1015300) 
and National Natural Science Foundation of China (Nos. 12371199, 12171031). 
J. Yang was supported by a scholarship (No. 202506040134) of China Scholarship Council. 
Part of the work was done when J. Yang was a visiting Ph.D. student in University of Warsaw, 
and she would like to express her deep gratitude for the hospitality by University of Warsaw.}

\subjclass[2010]{35Q35, 76D03, 35Q86}
\keywords{temperature patches, Boussinesq system, Stokes-transport system, boundary regularity, Prandtl number.}

\begin{abstract}
So far the global well-posedness of strong solutions for the 3D non-diffusive Boussinesq system with large initial data remains 
a remarkable open problem. In this paper, we solve this problem in the regime of large Prandtl number.
%We establish global regularity and study the infinite Prandtl number limit of temperature patches for the 3D non-diffusive Boussinesq system. 
More precisely, we prove the global existence and uniqueness of strong solution for this 3D Boussinesq system associated with initial data 
$(u_0,\theta_0)\in H^{\frac{1}{2}}(\mathbb{R}^3) \times (L^1\cap L^s(\mathbb{R}^3))$ with $s>3$, 
provided that the Prandtl number is sufficiently large (the threshold depends only on a scale-invariant norm of $(u_0,\theta_0)$); 
moreover, for the non-constant temperature patch initial data, 
we establish the global persistence of $C^{1,\gamma}$, $W^{2,\infty}$, 
and $C^{2,\gamma}$ ($0<\gamma<1$) boundary regularity of the evolved temperature patch, 
with corresponding estimates uniform in the large Prandtl number regime. 
Furthermore, we rigorously justify the limit as the Prandtl number tends to infinity and show that the patch solution of the 3D Boussinesq system 
converges to the unique patch solution of the 3D Stokes-transport system, 
and that the patch boundary regularity in $C^{1,\gamma}$, $W^{2,\infty}$, and $C^{2,\gamma}$ is preserved globally in time. 
In particular, our result for the 3D Stokes-transport system can be viewed as the 3D analogue of the main result in Grayer II [ARMA 2023] 
concerning 2D Stokes-transport system.
\end{abstract}

\maketitle
\tableofcontents

\section{Introduction}
In this paper, we study the Cauchy problem of the $d$-dimensional ($d$D) non-diffusive Boussinesq system
\begin{equation}\tag{B}\label{eq:BousEq}
\left\{\begin{aligned}
   \partial_t \theta + u\cdot \nabla \theta & = 0, \\
  \tfrac{1}{\mathrm{Pr}} \big(\partial_t u + u\cdot\nabla u \big) - \nu\Delta u 
  + \nabla p & = \theta e_d, \\
   \mathrm{div}\,u & = 0, \\
  (\theta, u)|_{t=0}(x) & = (\theta_0,u_0)(x),
\end{aligned}\right.
\end{equation}
where $d=2,3$, $(t,x)\in \R_+\times\mathbb{R}^d$, $e_d=(0,\cdots,0,1)^t$ 
and $\nu > 0$ is the kinematic viscosity coefficient, 
$\mathrm{Pr}>0$ is the non-dimensional Prandtl number (which is the quotient of $\nu$ over the thermal diffusivity $\kappa$).
The unknowns are the scalar temperature $\theta$, the velocity field $u=(u^1,u^2,u^3)^t$ and the scalar pressure $p$. 
The Boussinesq system \eqref{eq:BousEq} plays an important role in modeling natural convection phenomena in oceanic and atmospheric dynamics~\cite{Maj03,JP1987}, 
and serves as a key model for understanding Rayleigh-B\'enard convection \cite{Cha61,CD1999}. 

From the mathematical perspective, the Boussinesq system \eqref{eq:BousEq} 
is closely related to several fundamental models of incompressible fluid mechanics. 
When $\theta\equiv 0$, it reduces to the classical incompressible Navier-Stokes/Euler equations.
%so that one cannot expect to have a better theory for the Boussinesq system than for the Navier-Stokes equations.  
The coupling of velocity $u$ and temperature $\theta$ via the forcing term $\theta e_d$ 
in the momentum equation of \eqref{eq:BousEq} 
introduces an additional vortex-stretching mechanism due to temperature variation,
which may cause significant analytical difficulty even in the 2D case.
%which shares many of the analytical difficulties of the 3D Navier-Stokes and Euler equations.
As noted in \cite{MB02}, the inviscid (i.e., $\nu=0$) 2D Boussinesq system \eqref{eq:BousEq} 
shares deep analogy with the 3D axi-symmetric Euler equations with swirl (away from the symmetry axis), 
and to date, the global regularity of smooth solutions in this 2D inviscid whole-space case remains an outstanding open problem 
(see \cite{ChH21,ChH22} for recent advances in the spatial domain with boundary). 

For the viscous (i.e., $\nu>0$) Boussinesq system \eqref{eq:BousEq} with $\PPr=1$, 
the global existence and uniqueness of regular solutions associated with large initial data was well established in the 2D case. 
This was proved independently by Chae~\cite{DC06} and by Hou and Li~\cite{HL05} 
for the global well-posedness of smooth solution with 
$(u_0,\theta_0)\in H^s\times H^s(\mathbb{R}^2)$, $s>2$. 
%Later, Abidi and Hmidi~\cite{AH07} improved this result by showing that global well-posedness still holds for significantly less regular data, 
%allowing $\theta_0\in B^{0}_{2,1}$ and $u_0\in L^{2}\cap B^{-1}_{\infty,1}$. 
Subsequently, Hmidi and Keraani~\cite{HK2007} constructed the global weak solution
for initial data $\theta_0\in L^2(\mathbb{R}^2)$ and $u_0\in H^s(\mathbb{R}^2)$ with $s\in[0,2)$; later, 
Danchin and Paicu~\cite{DanP08b} obtained the uniqueness of this weak solution using the paradifferential calculus.  
See also~\cite{AH07,HKZ15} for related global regularity results.
%proved that these weak solutions possess global-in-time persistence of regularity in a wide range of Sobolev spaces.

For the 3D viscous Boussinesq system \eqref{eq:BousEq} with $\PPr=1$, 
the global well-posedness of the smooth solutions remains a remarkable open problem, 
akin to the millennium prize problem of 3D Navier-Stokes equations, 
and only a few partial results on global solutions are available. %on this model.  
Danchin and Pa\"icu~\cite{DanP08b} constructed global Leray-Hopf type weak solution of finite energy for the 3D Boussinesq system \eqref{eq:BousEq} 
with initial data 
$(u_0,\theta_0)\in L^2 \times L^2(\mathbb{R}^3)$; %And then addressing the uniqueness issue for such solutions only in 2D case. 
and they also established a Fujita-Kato type well-posedness result: 
for arbitrary data in the scale-invariant Lorentz and Besov spaces, 
a unique local-in-time strong solution exists, 
and under the additional assumption that $\|u_0\|_{L^{3,\infty}}+ \|\theta_0\|_{L^1}\leq c\, \nu$
with sufficiently small $c>0$, a unique global strong solution is obtained. 
%This result may be viewed as an extension of the classical Fujita-Kato theory for the incompressible three-dimensional Navier-Stokes equations~\cite{FujK64,Kato84}. 
In a subsequent paper~\cite{DanP08}, 
Danchin and Pa\"icu further refined the analysis to show the global well-poseness 
under the same smallness assumption, but with the initial data only in Lorentz spaces. 
%thus weakening the Besov regularity requirement. 
%and the smallness condition on the initial data.
Dong and Sun \cite{DS23} proved global asymptotic stability of a stratified stationary solution for the 3D Boussinesq system \eqref{eq:BousEq}
in the domain $\mathbb{R}^2\times (0,1)$ under slip boundary conditions.
See also Hmidi et al. \cite{AHK11,HR10} for global well-posedness results of the 3D Boussinesq system 
\eqref{eq:BousEq} with axisymmetric initial data without swirl.

\begin{comment}
In the modeling of large-scale atmospheric and oceanic flows, 
it is natural to allow the viscosity and diffusion coefficients in the Boussinesq approximation to be anisotropic, 
reflecting different horizontal and vertical scales and possibly vanishing dissipation in some coordinate directions. 
One can see \cite{CaoW13,LLT13,LiT16} 
and the references therein for various global well-posedness results of the Boussinesq system in the anisotropic or partially dissipative setting.
\end{comment}
\vskip1mm

Recently, there has been considerable interest on the so-called \textit{Boussinesq temperature patch problem} 
for the non-diffusive Boussinesq system \eqref{eq:BousEq}. 
%This problem involves a free boundary problem with 
Consider temperature initial data of the form, e.g. $\theta_0 = \mathbf{1}_{D_0}$,
which is a characteristic function of a bounded simply connected domain $D_0 \subset \mathbb{R}^d$.
Since $\theta$ satisfies a transport equation and the velocity $u$ is assumed to be regular enough,
it follows (at least formally) that the temperature patch structure is preserved, i.e., $\theta(x, t)=\mathbf{1}_{D(t)}(x)$,
where $D(t)=X_t\left(D_0\right)$ and $X_t:\mathbb{R}^d\mapsto\mathbb{R}^d$ 
is the flow map generated by the velocity $u$ satisfying
\begin{align}\label{eq:flowmap}
  \frac{\dd X_t(y)}{\dd t}=u\left(X_t(y), t\right),\left.\quad X_t(y)\right|_{t=0}=y.
\end{align}
This naturally raises the question of whether the initial regularity of the patch boundary is globally preserved along the evolution. 
More precisely, the typical question is as follows:
\begin{align*}
  \textit{suppose $\partial D_0 \in C^{k,\gamma} \,(\text{or}\,\,W^{k,\infty})$, $k \in \mathbb{Z}^{+}, \gamma \in(0,1)$,
  does $\partial D(t) \in C^{k,\gamma}\,(\text{or}\,\,W^{k,\infty})$ hold for all time?}
  \end{align*} 
Here, for a bounded simply connected domain $D_0\subset \R^3$, $\partial D_0\in C^{k,\gamma}$ 
means that there exists a finite number of charts $\{V_\beta, \varphi_\beta\}_{1\leq \beta\leq m}$ with $\varphi_\beta\in C^{k,\gamma}$ 
(or $W^{k,\infty}$) covering the two-dimensional compact submanifold $\partial D_0$ such that
\begin{equation}\label{def:varphi-beta}
\begin{split}
  \varphi_\beta:\quad  U_\beta&\longmapsto V_\beta \subset \R^3, \\
  (s_1,s_2)&\longmapsto \varphi_\beta(s_1,s_2)= (\varphi_\beta^1,\varphi_\beta^2,\varphi_\beta^3)(s_1,s_2)\in V_\beta,
\end{split}
\end{equation}
where $U_\beta$ is an open set of $\R^2$, $V_\beta$ is an open set of $\R^3$ 
near the neighbourhood of $\partial D_0$.
We say that $\partial D(t)\in C^{k,\gamma}$ (or $W^{k,\infty}$) if $X_t(\varphi_\beta) \in C^{k,\gamma}$ (or $W^{k,\infty}$)
for every $\beta=1,\cdots,m$.

The study of patch solutions dates back to the classical \textit{vorticity patch problem} 
for the 2D incompressible Euler equations raised in 1980s. 
%with initial vorticity $\omega_0 = \mathbf{1}_{D_0}$. 
Although some numerical experiments of the contour dynamics equation describing vortex patches were controversial, 
Chemin \cite{Chemin93} first gave an affirmative solution to this problem in the Eulerian framework.
By originally developing estimates of striated regularity, 
he established the global persistence of the patch boundary in the class $C^{k,\gamma}$ 
with $(k,\gamma)\in\mathbb{Z}_+\times(0,1)$. 
Later, Bertozzi and Constantin \cite{BerC} 
provided an alternative proof of the same persistence result based on a geometric cancellation lemma combined with harmonic analysis estimates; 
see also Serfati \cite{Serf94} and Radu~\cite{Ra22} for other proofs. 
%More recently, Radu~\cite{Ra22}
%revisited the vortex patch problem in the level-set formulation and obtained a self–contained proof of the global persistence of
%$C^{k,\gamma}$-regularity for the patch boundary, for all $(k,\gamma)\in\mathbb Z_+\times(0,1)$.
Very recently, Kiselev and Luo~\cite{KL23} proved the strong ill-posedness of boundary regularity for vortex patches in $C^2$ (or $W^{2,\infty}$).
%by constructing initial patch data in this class whose boundary curvature becomes instantaneously unbounded.
The analogue of the vortex patch problem for the 3D Euler equations was addressed by Gamblin and Saint-Raymond 
\cite{GSR95}, who proved the local well-posedness of vorticity patches with 
$C^{1,\gamma}$-boundaries as well as the global well-posedness result in the axisymmetric (without swirl) case; 
see also Danchin \cite{Dan99} for the related regularity persistence result in higher dimensions. 

The temperature patch problem of the viscous Boussinesq system \eqref{eq:BousEq} with $\PPr=1$ was initiated by
Danchin and Zhang \cite{DanZ17}. Using paradifferential calculus and striated estimates,
they proved that the $C^{1,\gamma}$-regularity of the patch boundary is globally preserved in the 2D case, 
and also in the higher–dimensional case under an additional smallness condition: 
$\|u_0\|_{L^{d,\infty}} + \nu^{-1} \|\theta_0\|_{L^{d/3}} \leq c\,\nu$. 
Subsequently, Gancedo and Garc\'ia-Ju\'arez \cite{GGJ17} provided an alternative proof of this $C^{1,\gamma}$ persistence result and, 
moreover, showed the global propagation of $W^{2,\infty}$ and $C^{2,\gamma}$ regularity of the temperature patch boundary. 
Their approach is based on new cancellations in time-dependent Calder\'on–Zygmund operators and on the tangential derivative along the patch boundary.
The same result was extended to the 3D non-constant temperature patches (called \textit{temperature fronts} in the literature \cite{Maj03}) 
by the same authors \cite{GGJ20} under a smallness condition of initial data.
Furthermore, by developing higher-order striated estimates, 
the first author together with Chae and Xue \cite{CMX22} established the global 
$C^{k,\gamma}$-regularity persistence of the patch boundary in the 2D case for all
$k\in\mathbb Z_+$ and $\gamma\in (0,1)$; 
see also \cite{LLX24} for the analogous persistence result for the 3D Boussinesq system \eqref{eq:BousEq}. 
We also mention that Park \cite{Park24} 
constructed an explicit patch initial data for the 2D Boussinesq system \eqref{eq:BousEq} 
showing the infinite-in-time (algebraic) growth of the curvature and perimeter of the evolved temperature patch boundary.

There are physical situations, such as in some highly-pressurized gases, where the Prandtl number $\PPr$ of the fluid is very large. 
When $\PPr$ tends to infinity,
the material derivative of $u$ in the momentum equation in the Boussinesq system \eqref{eq:BousEq} formally vanishes and
the system \eqref{eq:BousEq} reduces to the $d$-dimensional Stokes-transport system %(by assuming $\nu=1$):
\begin{equation}\tag{ST}\label{eq:ST}
\left\{\begin{aligned}
  \partial_t \theta + u \cdot \nabla \theta & =0,\\
  -\nu \Delta u + \nabla p & = \theta e_d, \\
  \nabla \cdot u & =0, \\
  \left.\theta \right|_{t=0} & = \theta_0.
%  \left.\theta\right|_{t=0} & =\theta_0 .
  \end{aligned}\right.
\end{equation}
Physically, the Stokes-transport system \eqref{eq:ST} 
arises as a macroscopic limit model for the sedimentation of spherical rigid particles in a viscous incompressible fluid 
in the regime where both fluid and particle inertia are negligible or very small \cite{Hofer18,Mec19,HS21,HS25}. 
H\"ofer~\cite{Hofer18} proved the global existence and uniqueness result for the 3D Stokes-transport system \eqref{eq:ST} 
with regular initial data. 
This result was later extended by Mecherbet~\cite{Mec21} to initial data in $L^1\cap L^\infty(\R^3)$ with the finite first moment, 
and independently, a similar global well-posedness result was obtained in~\cite{HS21} without the moment assumption. 
More recently, Mecherbet and Sueur~\cite{MS22} showed the global existence and uniqueness of weak solutions 
associated with the initial data $\theta_0\in L^1\cap L^p(\R^3)$, $p\geq 3$, 
and also proved analyticity of trajectories and exact controllability.
Inversi \cite{Inv23} obtained global existence (without uniqueness) of Lagrangian solutions for $L^1$-initial data.
In addition, Leblond~\cite{Leblond} established global existence and uniqueness for the Stokes-transport system \eqref{eq:ST} 
with bounded initial data in bounded domains of $\R^d$ ($d=2,3$) 
as well as in the infinite strip $\Omega=(0,1)\times\R$ with a flux condition.

The formal convergence from the Boussinesq system \eqref{eq:BousEq} to the Stokes-transport system \eqref{eq:ST} 
was first noted by \cite{Gray23} in the 2D case, and Grayer II \cite{Gray23} established global well-posedness of the 2D system \eqref{eq:ST} 
for Yudovich-type initial data in $L^1\cap L^\infty$, 
together with the global persistence of $C^{1,\gamma}$, $W^{2,\infty}$ and $C^{2,\gamma}$ ($0<\gamma<1$) boundary regularity of the patch solution.
The infinite Prandtl number limit of the 2D Boussinesq system \eqref{eq:BousEq} %to the 2D system \eqref{eq:ST} 
was later rigorously justified for regular solutions (including patch solution) on the torus $\mathbb{T}^2$ 
by the second author and Lazar, Xue ~\cite{LXY25}; they also proved the global persistence of 
$C^{k,\gamma}$ boundary regularity with any $k\ge1$ for the patch solution of the 2D system \eqref{eq:ST}. 
%In particular, they rigorously justified the infinite Prandtl number limit in $\mathbb{T}^2$, i.e.  as $\PPr\to\infty$, 
%patch solutions of the Boussinesq system converge to the unique patch solutions of the  Stokes–transport system, 
%while preserving the $C^{k,\gamma}$-regularity of the patch boundary and extending the result by Grayer II \cite{Gray23} 
%for the Stokes-transport system to all $k\ge1$.  
For the 3D Stokes-transport system \eqref{eq:ST}, 
Mecherbet \cite{Mec21} discovered an interesting traveling-wave patch solution: 
for $\theta_0 = \mathbf{1}_{B_0}$ (with $B_0\subset \mathbb{R}^3$ the unit ball), 
the system admits a unique solution $\theta = \mathbf{1}_{B(t)}$ 
that preserves the spherical patch structure for all the time. More precisely, 
there exists a constant $v^*\in\mathbb{R}^3$ such that
\begin{align*}
  (u,\theta)(t,x) = \big(u_0(x-v^* t), \theta_0(x-v^* t)\big),
  \quad\; \big(u - v^*\big)\cdot \mathbf{n} =0\;\;\; \textrm{on}\;\; \partial B(t),
\end{align*}
where $\mathbf{n}$ is the unit outer normal vector (see \cite{BM25} for related instability analysis).
For other type of patch solutions,
Gancedo et al. \cite{GGS25a,GGS25b,GGHSY} (see \cite{AMY00} for previous work) 
studied the dynamics of the sharp interface for the 2D Stokes-transport system \eqref{eq:ST} 
in the unbounded strip $\mathbf{D}= \mathbb{T}\times \mathbb{R}$. Using the contour dynamics formulation, 
they proved global well-posedness and global $C^{k,\gamma}$ ($k\geq 1$, $0<\gamma<1$) regularity persistence of the free interface,
and also investigated the long-time behavior around stable/unstable stratified states.
We refer to~\cite{DGL23,Park25,SZZ26a,SZZ26b} for recent studies on the long-time dynamics of the Stokes-transport system \eqref{eq:ST}
around stratified states or shear flows. 
%More recently, Cobb~\cite{Cobb23} obtained several well-posedness results in critical functional spaces for the fractional Stokes-transport system in all spatial dimensions $d\ge 2$.

We also remark that replacing the transport equation in \eqref{eq:ST} by the following transport-diffusion equation
$\partial_t \theta + u\cdot \nabla \theta - \kappa \Delta \theta = 0$ ($\kappa >0$)
yields a related Rayleigh-B\'enard model derived from the Boussinesq system with diffusion in the regime of infinite Prandtl number \cite{Cha61}. 
In recent decades, much research has been devoted to obtaining the (optimal) upper bound of the Nusselt number in terms of the Rayleigh number 
for this three-dimensional limit system in spatial domains with boundary condition; see \cite{CD1999,OS11,ChM25,Seis25} and references therein. 
Here, the Nusselt number is a physical quantity that measures the average vertical heat flux.
See also \cite{Wang04,Wang08,Wang08b,CNO16} for analogous work on the 3D Boussinesq system at finite (large) Prandtl number.
\vskip1mm

It is worth noting that, so far the global well-posedness result for the 3D Boussinesq system \eqref{eq:BousEq} 
typically requires smallness assumptions on the initial data, whereas its formal infinite Prandtl number limit, 
the Stokes-transport system \eqref{eq:ST}, is globally well-posed without any smallness condition. 
This contrast suggests that a large Prandtl number regime can be viewed as an alternative to the usual smallness assumption. 

In this paper, motivated by the above observation, 
we first establish global well-posedness for the 3D Boussinesq system \eqref{eq:BousEq} 
with a large Prandtl number and initial data $u_0\in H^{\frac{1}{2}}(\mathbb{R}^3)$, $\theta_0\in L^1\cap L^s(\mathbb{R}^3)$, $s>3$, 
and show the global uniform-in-$\PPr$ persistence of $C^{1,\gamma}$, $W^{2,\infty}$, 
and $C^{2,\gamma}$ boundary regularity of the non-constant temperature patch solution in this regime.
We then rigorously justify the convergence of the 3D Boussinesq system \eqref{eq:BousEq} 
in the limit of infinite Prandtl number and obtain the global regularity propagation of the patch solution for the 3D Stokes-transport system 
\eqref{eq:ST}. Note that our result for the Stokes-transport system is a 3D analogue of the main result in Grayer II \cite{Gray23}.
For simplicity, we set $\nu=1$ throughout the sequel. 

Our first result concerns the 3D Boussinesq system \eqref{eq:BousEq} with large Prandtl number.
\begin{theorem}\label{thm:exi-reg}
%Let $\mathrm{Pr}>0$. 
Let $u_0\in H^{\frac{1}{2}}(\R^3)$ be a divergence-free vector field and assume that  
$\theta_0 \in L^1\cap L^s(\R^3)$, $s>3$.
Then, there exists a universal small constant $c_*>0$ 
(chosen from \eqref{eq:c*} and \eqref{es.v.Hfr12} below) such that if
\begin{equation}\label{eq:data-cond}
  \mathrm{Pr}\geq \mathrm{Pr}_* \triangleq \frac{ \|u_0\|_{L^{3,\infty}(\R^3)} + \|\theta_0\|_{L^{1}(\R^3)}}{c_*},
\end{equation}
then the 3D Boussinesq system \eqref{eq:BousEq}
has a unique global solution $(u,\theta)$ which satisfies that, for any $T>0$ and $q\in(3,6)$,
\begin{equation}\label{eq:v-the-es}
  u \in C([0,T]; H^{\frac{1}{2}}(\R^3)) \cap L^2([0,T]; H^{\frac{3}{2}}\cap L^\infty)
  \cap \widetilde{L}^1([0,T];\dot{B}^{1+\frac{3}{q}}_{q,\infty}),
  \quad \theta \in L^\infty([0,T];L^1\cap L^s(\R^3)).
\end{equation}

Moreover, consider the temperature patch initial data 
\begin{align}\label{eq:tem-patch-data}
  \theta_0(x) = \overline{\theta}_0(x) \,\mathbf{1}_{D_0}(x), \quad \textrm{with}\;\;
  D_0\subset\DD\;\; \textrm{a bounded simply connected domain},
\end{align}
then under the condition \eqref{eq:data-cond}, the following regularity persistence results hold. 
\begin{enumerate}[(1)]
\item 
If $\partial D_0 \in C^{1,\gamma}$, $\gamma\in(0,1)$, $\overline{\theta}_0\in L^\infty(\overline{D_0})$, and $u_0$ satisfies
\begin{align*}
   u_0\in 
   \begin{cases}
   H^1(\mathbb{R}^3), 
      \quad & \textrm{if}\;\gamma\in(0,\frac{1}{2}), \\
     H^1\cap W^{1,p}(\mathbb{R}^3), \, \textrm{with some}\,\,\frac{3}{2-\gamma}<p<\infty,
     \quad & \textrm{if}\,\,\gamma\in[\frac{1}{2},1),
   \end{cases}
 \end{align*}
 then we have
\begin{align}\label{eq:the-patch-exp}
  \theta(x,t) = \overline{\theta}_0(X^{-1}_t(x)) \mathbf{1}_{D(t)}(x),\quad D(t)=X_t(D_0),
\end{align}
and
\begin{equation*}
  \partial D(t) \in L^\infty([0,T]; C^{1,\gamma}),\quad 
  \textrm{uniformly in \;$\PPr\in [\PPr_*,\infty)$},
\end{equation*}
where $X_t$ is the flow map solving \eqref{eq:flowmap} and $X_t^{-1}$ is its inverse.

\item 
If additionally $\partial D_0 \in W^{2,\infty}$, $\overline{\theta}_0\in C^{\mu}(\overline{D_0})$,
$\mu\in (0,1)$, and $u_0\in H^1\cap W^{1,p}(\R^3)$ for some $3<p<\infty$,
then we have
\begin{equation*}
  \partial D(t) \in L^\infty([0,T]; W^{2,\infty})\quad 
  \textrm{uniformly in \;$\PPr\in [\PPr_*,\infty)$}.
\end{equation*}

\item 
If additionally $ \partial D_0 \in C^{2,\gamma}$, $\gamma\in(0, 1)$, 
$\overline{\theta}_0\in C^\gamma(\overline{D_0})$, $u_0 \in H^1\cap W^{2,p}(\R^3)$ for some $\frac{3}{2-\gamma}<p<\infty$,
then we have
\begin{equation}
  \partial D(t) \in L^\infty([0,T]; C^{2,\gamma})\quad 
  \textrm{uniformly in \;$\PPr\in [\PPr_*,\infty)$}.
\end{equation}
\end{enumerate}
\end{theorem}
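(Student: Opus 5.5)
The plan is to rescale time so that the large Prandtl number appears as a small parameter in front of the material derivative, and then treat the system as a perturbation of Stokes--transport. Set $\varepsilon=1/\PPr$. The momentum equation reads
\begin{equation*}
\varepsilon(\partial_t u+u\cdot\nabla u)-\Delta u+\nabla p=\theta e_3 .
\end{equation*}
Split $u=v+w$, where $w=(-\Delta)^{-1}\mathbb{P}(\theta e_3)$ is the instantaneous Stokes velocity and $v$ is the remainder. For $w$, Hardy--Littlewood--Sobolev gives $\|w\|_{L^{3,\infty}}\lesssim\|\theta\|_{L^1}=\|\theta_0\|_{L^1}$. In addition, $\nabla w\in L^\infty$ when $\theta\in L^1\cap L^s$ with $s>3$, uniformly in $\varepsilon$, up to a log via the Yudovich/Calder\'on--Zygmund structure. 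The remainder $v$ solves a damped heat-type equation
\begin{equation*}
\varepsilon\partial_t v-\Delta v=-\varepsilon\,\mathbb{P}\big(u\cdot\nabla u+\partial_t w\big),
\end{equation*}
with $\partial_t w=-(-\Delta)^{-1}\mathbb{P}\,\mathrm{div}(u\theta)e_3$. After rescaling time $t=\varepsilon\tau$, this is a Navier--Stokes-type problem. The forcing is $O(\varepsilon)$ relative to dissipation, and the smallness of $\varepsilon(\|u_0\|_{L^{3,\infty}}+\|\theta_0\|_{L^1})$ plays exactly the role of the Danchin--Pa\"icu smallness condition.

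The first step is global existence and uniqueness. I would run a Fujita--Kato argument in $\dot H^{1/2}$ (plus $L^{3,\infty}$) for $v$, with the critical quantity $\varepsilon\|u\|_{L^{3,\infty}}\lesssim\varepsilon(\|u_0\|_{L^{3,\infty}}+\|\theta_0\|_{L^1})\le c_*$. This gives the energy bound $\varepsilon\|v\|_{L^\infty_tH^{1/2}}^2+\|v\|_{L^2_t\dot H^{3/2}}^2\lesssim\cdots$, which fixes $c_*$ in \eqref{es.v.Hfr12}. Next, maximal regularity for the heat semigroup in Chemin--Lerner spaces gives $u\in\widetilde L^1_T\dot B^{1+3/q}_{q,\infty}$ with $q\in(3,6)$, hence $\nabla u\in L^1_TL^\infty$ after interpolation. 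The constraint $q<6$ comes from $\theta\in L^1\cap L^s$ feeding $\partial_t w$. The conditions $q>3$ and $s>3$ make $\nabla u$ Lipschitz-controlled, so the flow map exists and $\theta$ is transported with conserved $L^r$ norms. Uniqueness follows from a Lagrangian or $\dot H^{-1}$-type stability estimate on the difference, using $\nabla u\in L^1L^\infty$, as in Danchin--Pa\"icu.

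For the patch persistence, I would follow the striated regularity and tangential-derivative approach of Gancedo--Garc\'ia-Ju\'arez \cite{GGJ17,GGJ20}, making every estimate uniform in $\varepsilon$. Let $W_0$ be a tangential divergence-free vector field to $\partial D_0$ built from the charts, for instance $\nabla\phi_0\times\nabla\psi_0$, and let $W_t=(\partial_{W_0}X_t)\circ X_t^{-1}$. This field satisfies $\partial_tW+u\cdot\nabla W=\partial_W u$, and $\|W\|_{C^\gamma}$ controls $\partial D(t)\in C^{1,\gamma}$. The key estimate is
\begin{equation*}
\|\nabla u\|_{L^\infty}\lesssim \|\nabla u\|_{L^2}+\|\nabla u\|_{C^\gamma_W}\log\big(e+\cdots\big),
\end{equation*}
together with $\|\partial_W u\|_{C^\gamma}\lesssim\|\nabla u\|_{L^\infty}\|W\|_{C^\gamma}+\cdots$. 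For $w$ this is the classical cancellation for Calder\'on--Zygmund operators of order $-1$ applied to $\theta=\overline\theta_0\circ X^{-1}\mathbf 1_{D(t)}$, whose tangential derivative vanishes distributionally. For $v$, I would use parabolic regularity of the $\varepsilon$-heat equation; this is where the thresholds on $u_0$ enter ($H^1$, and $W^{1,p}$ with $p>3/(2-\gamma)$ so that $v\in L^1_TC^{1,\gamma}$ uniformly). A Gronwall/Osgood argument then closes $\|W\|_{L^\infty_TC^\gamma}$.

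Parts (2) and (3) are handled at the next level: differentiate once more along $W$ and estimate $\partial_W^2X$ in $L^\infty$ or $C^\gamma$. This requires $\nabla^2 w$ restricted to the patch, which is smooth up to the boundary on each side when $\overline\theta_0\in C^\mu$ or $C^\gamma$; the jump is handled with the GGJ cancellation for second-order CZ kernels. It also requires $v\in L^1_TW^{2,\infty}$ or $C^{2,\gamma}$ from $u_0\in W^{1,p}$ or $W^{2,p}$. The main obstacle I expect is uniformity in $\varepsilon$ of the estimates on $v$. The forcing $\varepsilon\partial_tw$ involves $\mathrm{div}(u\theta)$, which has only patch (jump) regularity. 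Heat smoothing on the fast time scale $\varepsilon$ must convert it into $L^1_t$ Lipschitz or $C^{1,\gamma}$ bounds without an $\varepsilon^{-1}$ loss, and the initial layer from $u_0\neq w(0)$ must be integrable in time uniformly. My first attempt would be to write $v$ by Duhamel with the kernel $\varepsilon^{-1}e^{(t-s)\Delta/\varepsilon}$. Its $L^1_t$ norm is $\varepsilon$-independent, so I would estimate in $\widetilde L^1_T$ Besov norms, using $\varepsilon\|u\|_{L^{3,\infty}}\le c_*$ to absorb the nonlinear term.
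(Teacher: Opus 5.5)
Your decomposition is the paper's in disguise. Since $\nabla\wedge w=(-\Delta)^{-1}(\partial_2\theta,-\partial_1\theta,0)^t=\mathcal{R}_{-1}\theta$, you have $\nabla\wedge v=\Gamma$, and the curl of your $v$-equation is exactly \eqref{eq.Gamm}. Your rescaling is also exact: $(\varepsilon u,\varepsilon\theta)(\varepsilon\tau,x)$ solves the $\PPr=1$ system. So existence and uniqueness reduce to the known $\PPr=1$ small-data theory (Danchin--Pa\"icu) applied to $(\varepsilon u_0,\varepsilon\theta_0)$. The paper re-derives this through \eqref{eq:vL3inf-es}--\eqref{es.v.Hfr12b}.

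The real difference is how you obtain the uniform-in-$\PPr$ bounds. The paper works with $\Gamma$: $L^2$ and $L^p$ energy estimates closed by Gr\"onwall, then a dyadic Duhamel argument giving $\widetilde L^r_TB^{2/r}_{p,\infty}$. Your velocity-level maximal regularity, which you leave as a first attempt, does close, and without Gr\"onwall:
\begin{itemize}
\item For $\gamma>0$, Bony's decomposition gives $\|u\otimes u\|_{\widetilde L^1_T\dot B^{\gamma}_{\infty,\infty}}\lesssim\|u\|_{L^\infty_TL^{3,\infty}}\|u\|_{\widetilde L^1_T\dot B^{1+\gamma}_{\infty,\infty}}$.
\item The bound $\|u\|_{L^\infty_TL^{3,\infty}}\lesssim\|u_0\|_{L^{3,\infty}}+\|\theta_0\|_{L^1}$ is independent of $\PPr$, so the $v$-part is absorbed once $\PPr\geq\PPr_*$.
\item The $w$-part costs only $T\|\theta_0\|_{L^1\cap L^\infty}$.
\item The requirement $v_0\in\dot B^{\gamma-1}_{\infty,\infty}$ is exactly where $u_0\in H^1$ ($\gamma<\frac12$) or $u_0\in W^{1,p}$ with $p>\frac{3}{2-\gamma}$ enters.
\end{itemize}

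With that in hand, part (1) needs none of your logarithmic/Osgood striated machinery. $\nabla w$ is of order $-1$ in $\theta\in L^1\cap L^\infty$, hence lies in $C^\gamma$ for every $\gamma<1$, with no log. So $\nabla u\in L^1_TC^\gamma$, and Lemma \ref{lem:flow} finishes the argument, exactly as in the paper.

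For (2)--(3) your route is the geometric one: piecewise H\"older bounds on $\nabla^2 w$ across a $C^{1,\gamma}$ interface, a Gancedo--Garc\'ia-Ju\'arez-type treatment of the jump, and $v\in L^1_TW^{2,\infty}$ or $L^1_TC^{2,\gamma}$ from parabolic smoothing. This is close to the $u\in L^r_TW^{2,\infty}$ argument the paper mentions and deliberately bypasses. The paper instead propagates $\nabla\WW$ in $L^\infty$ or $C^\gamma$, using $\partial_\WW\theta\in L^\infty_TC^{\mu-1}$ (Lemma \ref{lem:str-reg}) and commutator/striated Besov estimates. That avoids 3D layer-potential analysis and iterates to higher regularity (Remark \ref{rem:high-reg}).

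Two statements in your sketch are wrong as written and need repair.

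First, the $\dot H^{1/2}$ Fujita--Kato estimate cannot be run on $v$. When $\int\theta_0\neq0$, which is the generic patch case, $w$ decays only like $|x|^{-1}$. Then $\|w\|_{\dot H^{1/2}}^2=\int|\xi|^{-3}|\widehat{\mathbb{P}(\theta e_3)}(\xi)|^2\,\dd\xi$ diverges logarithmically at $\xi=0$, so $v_0=u_0-w_0\notin\dot H^{1/2}$. Run the estimate on $u$ instead, bounding the buoyancy term by $\PPr\|\theta\|_{\dot H^{-1/2}}\|u\|_{\dot H^{3/2}}\lesssim\PPr\|\theta_0\|_{L^{3/2}}\|u\|_{\dot H^{3/2}}$, as in \eqref{es.v.Hfr12b}.

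Second, for the non-constant patches in the statement, the tangential derivative of $\theta$ does not vanish. One has $\partial_W\theta=(\partial_{W_0}\theta_0)\circ X_t^{-1}=\mathbf{1}_{D(t)}\,\partial_W(\overline\theta_0\circ X_t^{-1})$. For $\overline\theta_0\in L^\infty$ this is only the $C^{-1}$-type distribution $\mathrm{div}(W\theta)$. Moreover, the classical patch cancellation concerns order-$0$ kernels; order-$(-1)$ kernels need none. This is harmless in (1), for the reason above. In (2)--(3), however, you must rely either on your piecewise-regularity argument, which genuinely uses $\overline\theta_0\in C^\mu$ or $C^\gamma$, or on the paper's substitute $\partial_\WW\theta\in C^{\mu-1}$, not on any vanishing.
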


\begin{comment}
In the above, the patch boundary $\partial D(t)\in L^\infty_T(X)$ 
($X$ is some Banach space, e.g., $C^{k,\gamma}$)
means that $X_t(\varphi_\beta)\in L^\infty_T(X)$ for every $\beta\in\{1,\cdots,m\}$,
where $\{V_\beta,\varphi_\beta\}_{1\leq \beta\leq m}\in X$ 
is the charts of the initial boundary $\partial D_0$ given by \eqref{def:varphi-beta}.
One can see Lemma \ref{lem:sr-cond} for the characterization of 
$\partial D(t)\in L^\infty_T(X)$ in terms of the regularity of the admissible system 
$\mathcal{W}(t,x)=\{W^i(t,x)\}_{1\leq i\leq 5}$ 
composed of divergence-free tangential vectors.
\end{comment}

\begin{remark}[Persistence of higher $C^{k,\gamma}$ and $W^{k,\infty}$ regularity]\label{rem:high-reg}
  Under the assumptions of Theorem \ref{thm:exi-reg} and by adding suitable hypotheses,
one can prove the uniform-in-$\PPr$ global $C^{k,\gamma}$ and 
$W^{k,\infty}$ boundary regularity of temperature patches with any $k\geq 3$, $0<\gamma<1$ 
and $\mathrm{Pr}\in[\mathrm{Pr}_*,\infty)$. 

Indeed, with necessary modification, 
the proof is analogous to that for the 2D case presented in \cite{LXY26}. 
%where the uniform-in-$\PPr$ global $C^{k,\gamma}$- and 
%$W^{k,\infty}$-boundary regularity of temperature patches is obtained for any 
%$k\in\mathbb{Z}_+$, $0<\gamma<1$ and $\PPr\geq 1$. 
We sketch here only the persistence result of the $W^{3,\infty}$ boundary regularity. 
If additionally $\partial D_0 \in W^{3,\infty}$, $\overline{\theta}_0\in C^{1,\mu}(\overline{D_0})$ with
$\mu\in (0,1)$, and $u_0\in H^1\cap W^{2,p}(\R^3)$ for some $p\in(3,\infty)$,
then arguing as in Section~\ref{sec:W2} (considering the equation of $\partial_{\mathcal{W}}(\nabla \mathcal{W})$ 
instead of equation \eqref{eq:nabW}), and taking advantage of \eqref{es:prod-es6}-\eqref{es:prod-es7}
in Lemma~\ref{lem:str-es1}, Lemma \ref{lem:str-reg} and \eqref{eq:W-C2+gam-2}, 
we can show that 
$\partial_\WW\WW\in L^\infty(0,T; W^{1,\infty})$ uniformly in $\PPr\in [\PPr_*,\infty)$,
which yields $\partial_\WW^2\WW\in L^\infty(0,T; L^{\infty})$ uniformly in $\PPr$ 
and the global uniform persistence of the $W^{3,\infty}$ patch boundary regularity.
\end{remark}

\begin{remark}[Two-phase temperature patches]
The temperature patch initial data can also be treated in the form
$\theta_0=\bar\theta_1\,\mathbf{1}_{D_0}+\bar\theta_2\,\mathbf{1}_{D_0^c},$ where $\bar\theta_1$ and 
$\bar\theta_2$ are functions defined in $\overline{D_0}$ and $\overline{D_0^c}$, respectively; see, e.g., the 3D patch setting in~\cite{LLX24}.
\end{remark}

Next, our second main result rigorously investigates the temperature patch solution in the regime of infinite Prandtl number,
which connects both systems \eqref{eq:BousEq} and \eqref{eq:ST} in the 3D whole space $\mathbb{R}^3$.
%we rigorously investigate the limit when $\PPr\to\infty$ and show the persistence of  
%$C^{k,\gamma}$- and $W^{k,\infty}$-boundary regularity of the 3D Stokes–transport system \eqref{eq:ST}. 
%Our second main result can be formulated as follows.
\begin{theorem}\label{thm:limit}
Let $T>0$ be any given. %and $\PPr_*>0$ be the constant given by \eqref{eq:data-cond}.
For every $\mathrm{Pr} \in [\mathrm{Pr}_*,\infty)$ with $\PPr_*>0$ large enough ($\PPr_*$ can be chosen universally due to 
\eqref{eq:data-cond} and the strong convergence of $u_0^{\PPr}$), suppose that $(u^{\mathrm{Pr}}, \theta^{\mathrm{Pr}})$ 
is the unique global regular solution to the 3D Boussinesq system \eqref{eq:BousEq} associated with $(u_0^{\PPr},\theta_0)$, 
constructed in Theorem \ref{thm:exi-reg},
where $\theta_0(x) = \overline{\theta}_0(x) \,\mathbf{1}_{D_0}(x)$ 
is the temperature patch initial data \eqref{eq:tem-patch-data}, $\partial D_0 \in C^{1,\gamma}(\R^3)$ with $\gamma\in(0,1)$, $\overline{\theta}_0\in L^\infty(\overline{D_0})$, and $u_0^{\PPr}$ satisfies $\nabla\cdot u_0^{\PPr} = 0$ and
\begin{align*}
   u_0^{\PPr}\in 
   \begin{cases}
   H^1(\mathbb{R}^3), 
      \quad & \textrm{if}\;\; \gamma \in (0,\frac{1}{2}), \\
     H^1\cap W^{1,p}(\mathbb{R}^3), \, \textrm{with some}\,\,\frac{3}{2-\gamma}<p<\infty,
     \quad & \textrm{if}\;\;\gamma\in[\frac{1}{2},1).
   \end{cases}
\end{align*}
In addition, by sending $\PPr\rightarrow \infty$, assume that $u^{\mathrm{Pr}}_0$ strongly converges to $u_0\in H^1(\mathbb{R}^3)$, 
which satisfies the compatibility condition $-\Delta u_0 + \nabla p_0 = \theta_0 e_3$ with some function $p_0$.

Then, as $\mathrm{Pr}\rightarrow \infty$, up to extraction of a subsequence, 
\begin{align*}
  \textrm{$(u^{\mathrm{Pr}},\theta^{\mathrm{Pr}})$\;\; converges (weakly) to\;
  $(u, \theta)$}, 
\end{align*}
which is the unique global solution of the 3D Stokes-transport system
\eqref{eq:ST} with initial data $\theta_0$, and $\theta$ satisfies
\begin{equation}\label{eq:limit-targ-1}
  \theta(x, t) = \overline{\theta}_0(X_t^{-1}(x)) \mathbf{1}_{D(t)}(x), \quad
  \textrm{with}\;\; \partial D(t) \in L^{\infty}\big(0, T ; C^{1,\gamma}\big),
\end{equation}
where $D(t)=X_t(D_0)$,  $X_t:\mathbb{R}^3\rightarrow \mathbb{R}^3$ solving \eqref{eq:flowmap} 
is the flow map generated by the velocity $u$ and $X^{-1}_t$ is its inverse. 
Moreover, the following statements hold.
\begin{enumerate}[(1)]
\item 
If additionally $\partial D_0 \in W^{2,\infty}$, $\overline{\theta}_0\in C^{\mu}(\overline{D_0})$ with
$\mu\in (0,1)$, and $u_0^{\PPr} \in H^1\cap W^{1,p}(\R^3)$ for some $3<p<\infty$.
Then the temperature patch solution \eqref{eq:limit-targ-1} satisfies
\begin{equation}\label{eq:limit-targ-2}
  \partial D(t) \in L^\infty([0,T]; W^{2,\infty}).
\end{equation}

\item 
If additionally $ \partial D_0 \in C^{2,\gamma}$ with $\gamma\in(0, 1)$, 
$\overline{\theta}_0\in C^\gamma(\overline{D_0})$, $u_0^{\PPr}\in H^1\cap W^{2,p}(\DD)$ for some $\frac{3}{2-\gamma}<p<\infty$. 
Then the temperature patch solution \eqref{eq:limit-targ-1} satisfies
\begin{equation}\label{eq:limit-targ-3}
  \partial D(t) \in L^\infty([0,T]; C^{2,\gamma}).
\end{equation}
\end{enumerate}
\end{theorem}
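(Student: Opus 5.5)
The plan is a compactness argument built on the uniform-in-$\PPr$ bounds of Theorem~\ref{thm:exi-reg}, followed by passage to the limit in both the equations and the striated (patch) quantities.

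\textbf{Step 1: uniform bounds.} Since $u_0^{\PPr}\to u_0$ strongly in $H^1$, the quantity $\|u_0^{\PPr}\|_{L^{3,\infty}}+\|\theta_0\|_{L^1}$ is bounded uniformly in $\PPr$. Hence a single $\PPr_*$ satisfying \eqref{eq:data-cond} works for all large $\PPr$, and Theorem~\ref{thm:exi-reg} applies on $[0,T]$ with the following bounds uniform in $\PPr\in[\PPr_*,\infty)$:
\begin{itemize}
\item $u^{\PPr}$ is bounded in $L^2_T(H^{3/2}\cap L^\infty)\cap \widetilde L^1_T(\dot B^{1+3/q}_{q,\infty})$;
\item $\theta^{\PPr}$ is bounded in $L^\infty_T(L^1\cap L^\infty)$;
\item the patch boundary $\partial D^{\PPr}(t)$ is bounded in $L^\infty_T(C^{1,\gamma})$.
\end{itemize}
The last bound is equivalent, via the striated/admissible-system characterization used in the paper, to bounds on $\WW^{\PPr}$ in $L^\infty_T(C^\gamma)$ together with $\|\nabla u^{\PPr}\|_{L^1_TL^\infty}$.

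I will also need a uniform bound making $\nabla u^{\PPr}$ Lipschitz-controlled. Writing the momentum equation as a Stokes problem with small inertia, $-\Delta u+\nabla p=\theta e_3-\PPr^{-1}(\partial_t u+u\cdot\nabla u)$, the maximal regularity estimates behind Theorem~\ref{thm:exi-reg} should give $\|\nabla u^{\PPr}\|_{L^1_TL^\infty}\le C$ independently of $\PPr$. Since $\theta^{\PPr}$ solves a transport equation with a divergence-free field, we also have $\partial_t\theta^{\PPr}$ bounded in $L^\infty_T W^{-1,r}$. Finally, from the momentum equation, $\PPr^{-1}\partial_t u^{\PPr}$ is bounded in a negative Sobolev space.

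\textbf{Step 2: extraction and identification of the limit.} By Aubin--Lions and Banach--Alaoglu, up to a subsequence:
\begin{itemize}
\item $\theta^{\PPr}\rightharpoonup\theta$ weakly-$*$ in $L^\infty_T(L^1\cap L^\infty)$, and strongly in $C_T(H^{-1}_{loc})$;
\item $u^{\PPr}\rightharpoonup u$ weakly in $L^2_TH^{3/2}$.
\end{itemize}

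To get strong compactness of $u^{\PPr}$ despite the weak time-derivative control, decompose $u^{\PPr}=\mathcal S(\theta^{\PPr})+r^{\PPr}$, where $\mathcal S=(-\Delta)^{-1}\mathbb P(\cdot\,e_3)$ is the Stokes solution operator. The remainder satisfies $\PPr^{-1}\partial_t r-\Delta r=-\PPr^{-1}(\mathbb P(u\cdot\nabla u)+\partial_t\mathcal S\theta)$. Energy estimates for $r$ in $L^2_TH^1$ should give a bound $O(\PPr^{-1/2})$ plus an initial-layer term. That initial-layer term decays exponentially in $\PPr t$, and it vanishes in the limit because of the compatibility condition $u_0=\mathcal S\theta_0$ together with $u_0^{\PPr}\to u_0$. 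Therefore $u^{\PPr}\to\mathcal S\theta$ strongly in $L^2_TH^1_{loc}$, which gives $-\Delta u+\nabla p=\theta e_3$ and lets us pass to the limit in the nonlinear term $u^{\PPr}\theta^{\PPr}$.

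Uniqueness for \eqref{eq:ST} in the class $\theta\in L^\infty_T(L^1\cap L^\infty)$ follows from the known results \cite{Mec21,HS21,MS22}. This is what identifies the limit.

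\textbf{Step 3: passing the patch structure to the limit.} The uniform $\|\nabla u^{\PPr}\|_{L^1_TL^\infty}$ bound makes the flows $X^{\PPr}_t$ and their inverses equi-Lipschitz, so by Arzel\`a--Ascoli $X^{\PPr}\to X$ locally uniformly. The limit $X$ is the flow of $u$, because $u^{\PPr}\to u$ strongly and the limit field is log-Lipschitz/Lipschitz. Then:
\begin{itemize}
\item the charts $X^{\PPr}_t(\varphi_\beta)$ are bounded in $C^{1,\gamma}$ (respectively $W^{2,\infty}$, $C^{2,\gamma}$ under the hypotheses of items (1)--(2)), and converge uniformly to $X_t(\varphi_\beta)$;
\item lower semicontinuity of these norms under uniform convergence gives \eqref{eq:limit-targ-1}--\eqref{eq:limit-targ-3};
\item the representation $\theta=\overline\theta_0(X_t^{-1})\mathbf 1_{D(t)}$ follows from the uniform convergence of $X^{\PPr}$ and of $(X^{\PPr})^{-1}$.
\end{itemize}

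The main obstacle is Step 2, specifically the initial layer. The time derivative of $u^{\PPr}$ is controlled only at the scale $\PPr$, so obtaining strong convergence of the velocity requires quantifying the relaxation of $u^{\PPr}$ to $\mathcal S\theta^{\PPr}$. This is where the compatibility condition and the strong convergence of $u_0^{\PPr}$ are essential. If only weak convergence of the velocity were available, I would instead use the linear structure: $\theta^{\PPr}u^{\PPr}$ with $\theta^{\PPr}$ strongly compact in $C_TH^{-1}_{loc}$ and $u^{\PPr}$ bounded in $L^2_TH^1$.
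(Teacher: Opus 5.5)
Your overall plan is sound: uniform bounds, compactness, identification of the Stokes--transport limit, then transfer of patch regularity. But Step~1 rests on bounds the paper does not have.

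\textbf{Step 1: the bounds you list are not uniform in $\PPr$.} The class \eqref{eq:v-the-es} in Theorem~\ref{thm:exi-reg} comes from \eqref{es.v.L2}, \eqref{es.v.Hfr12b} and \eqref{eq:u-es1}, whose right-hand sides grow with $T\PPr$. Only the homogeneous $L^2_T\dot H^{3/2}$ part of \eqref{es.v.Hfr12b} stays bounded after dividing by $\PPr$. So the following are not justified:
\begin{enumerate}
\item the weak limit in $L^2_TH^{3/2}$;
\item the $L^2_TH^1$ bound in your fallback;
\item the uniform Lipschitz bound you defer to ``maximal regularity'', which the existence theory does not supply.
\end{enumerate}
The uniform input is Proposition~\ref{prop:ap-es1} (and Proposition~\ref{prop:ap-es2} under the extra hypotheses). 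It gives $\nabla u^{\PPr}$ bounded in $L^\infty_TL^2\cap L^r_TL^\infty$ for $r<\frac43$, hence $u^{\PPr}$ bounded in $L^\infty_TL^6$. The proof goes through $\Gamma=\Omega-\mathcal R_{-1}\theta$, which absorbs the buoyancy term $\PPr(\partial_2\theta,-\partial_1\theta,0)^t$ of the vorticity equation. Then $\partial_t\Gamma+u\cdot\nabla\Gamma-\PPr\Delta\Gamma$ has only $O(1)$ sources, which is what handles the initial layer, at the price of time integrability $r<\frac43$. With these bounds the paper takes weak limits in $L^{3/2}_TW^{1,6}$. It then passes to the limit in $u^{\PPr}\theta^{\PPr}$ exactly as in your last sentence, pairing weak $u^{\PPr}$ with \eqref{eq:the-str-conv}. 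At that stage neither strong convergence of the velocity nor the compatibility condition is needed.

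\textbf{Step 2: the strong-convergence detour does not close as written.} Without uniform $L^2$ control of $u^{\PPr}$ you have none for $r^{\PPr}=u^{\PPr}-\mathcal S\theta^{\PPr}$, so an $L^2_TH^1$ energy estimate is unavailable. Also, on $\R^3$ the layer $e^{\PPr t\Delta}r_0^{\PPr}$ does not decay exponentially. The estimate does close one derivative higher. Testing the $r$-equation against $-\Delta r$ gives
\[
\|\nabla r^{\PPr}\|_{L^\infty_TL^2}^2\le\|\nabla(u_0^{\PPr}-u_0)\|_{L^2}^2+\frac{1}{\PPr}\big\|\mathbb{P}(u^{\PPr}\cdot\nabla u^{\PPr})+\partial_t\mathcal S\theta^{\PPr}\big\|_{L^2_TL^2}^2 .
\]
The compatibility condition is what identifies $\nabla r_0^{\PPr}$ with $\nabla(u_0^{\PPr}-u_0)$. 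The last norm is bounded uniformly via Proposition~\ref{prop:ap-es1}, using $\|u\cdot\nabla u\|_{L^2}\le\|u\|_{L^6}\|\nabla u\|_{L^2}^{2/3}\|\nabla u\|_{L^\infty}^{1/3}$ and $\|\partial_t\mathcal S\theta\|_{L^2}\lesssim\|u\theta\|_{L^{6/5}}$. Combined with \eqref{eq:the-str-conv}, since the supports of $\theta^{\PPr}$ stay in a fixed ball, this gives $\nabla u^{\PPr}\to\nabla u$ strongly in $L^\infty_TL^2$. That is more than the paper proves.

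\textbf{Step 3: a different but valid route.} The paper argues as follows:
\begin{enumerate}
\item $C^{1,\gamma}$ from weak-$*$ lower semicontinuity of $u^{\PPr}$ in $L^r_TC^{1,\gamma}$, plus Lemma~\ref{lem:flow} for the flow of the limit $u$;
\item $\theta=\theta_0\circ X_t^{-1}$ from uniqueness for transport by a Lipschitz field;
\item $W^{2,\infty}$ and $C^{2,\gamma}$ by passing to the limit in $\WW^{\PPr}$ (Aubin--Lions, with $\partial_t\WW^{\PPr}$ bounded in $L^2_TL^\infty$), checking that the limit solves \eqref{eq:WW}, and applying Lemma~\ref{lem:sr-cond}.
\end{enumerate}
You instead use lower semicontinuity of the $C^{k,\gamma}$ or $W^{k,\infty}$ norms of the charts $X^{\PPr}_t\circ\varphi_\beta$ under locally uniform convergence. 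This is more elementary and needs no limit equation for $\WW$. It does require two things:
\begin{enumerate}
\item reading Theorem~\ref{thm:exi-reg} as quantitative, $\PPr$-uniform bounds on these charts;
\item identifying $\lim X^{\PPr}$ with the flow of $u$.
\end{enumerate}
With the repaired strong convergence the identification is immediate, since $\|u^{\PPr}-u\|_{L^\infty}\lesssim\|u^{\PPr}-u\|_{L^6}^{2/3}\|\nabla(u^{\PPr}-u)\|_{L^\infty}^{1/3}\to0$ in $L^1_T$. The same inequality bounds $u^{\PPr}$ in $L^1_TL^\infty$, which gives the time equicontinuity Arzel\`a--Ascoli needs. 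With only weak convergence, integrate the flow equation against $\phi(x)$ and use that the limit map is measure preserving to move the test function onto $u^{\PPr}-u$. In exchange, the paper's route produces the limiting conormal system $\WW$ and its equation, which is what carries over to the higher-order regularity of Remark~\ref{rem:high-reg}.
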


\begin{remark}
It should be noted that, in contrast to Theorem \ref{thm:limit} and Proposition \ref{prop:weak-limit} concerning the 3D Boussinesq system \eqref{eq:BousEq} 
in $\mathbb{R}^3$ (which also naturally extend to $\mathbb{T}^3$), 
it remains an open problem to show the analogous convergence result as $\PPr\rightarrow \infty$ for the 2D Boussinesq system \eqref{eq:BousEq}
in whole space $\mathbb{R}^2$ (recalling that \cite{LXY25} only treats the torus case $\mathbb{T}^2$).
The main technical reason is that we have the classical embedding $\dot H^1(\mathbb{R}^3)\hookrightarrow L^6(\mathbb{R}^3)$, 
but $\dot H^1(\mathbb{R}^2)$ does not embed continuously in any Lebesgue space $L^p(\mathbb{R}^2)$ with $p\in[1,\infty]$.
\end{remark}

\begin{remark}
In view of Remark~\ref{rem:high-reg}, the conclusion of Theorem~\ref{thm:limit} can be further extended.  
In particular, the convergence result and the persistence of boundary regularity remain valid for general patch initial data whose boundary 
$\partial D_0$ possesses $W^{k,\infty}$ or $C^{k,\gamma}$ regularity with any $k \geq 3$ and $\gamma \in (0,1)$.  
\end{remark}

In the proof of the existence and uniqueness part of Theorem~\ref{thm:exi-reg}, 
the crucial new ingredient is the \textit{a priori} global-in-time estimate 
$u\in L^\infty(0,T;H^{\frac{1}{2}})\cap L^2(0,T;H^{\frac{3}{2}})$ 
for large Prandtl number $\PPr$ without imposing the smallness assumption on the initial data.
This estimate is based on a classical energy-type argument and the global $L^\infty_T(L^{3,\infty})$-estimate of $u$ 
(analogous to the corresponding estimate in \cite{DanP08b} with small initial data). Once this estimate is established, 
the desired existence and uniqueness result follows essentially by repeating the argument in \cite{LLX24}. 

The remaining part of Theorem \ref{thm:exi-reg} deals with the regularity persistence results for non-constant temperature patches. 
Motivated by the recent work \cite{LLX24} (which originated from the ideas of \cite{HKR10,HKR11}),
we introduce the following auxiliary quantity 
\begin{align}\label{def:Gamma}
  \Gamma: = \Omega - (\mathcal{R}_{-1,2}\theta, - \mathcal{R}_{-1,1}\theta,0)^t,
\end{align} 
where $\Omega = \nabla \wedge u$ denotes the vorticity and $\mathcal{R}_{-1,j} : =\partial_j (-\Delta)^{-1}$ for $j=1,2$.
The good unknown $\Gamma$ satisfies the equation \eqref{eq.Gamm} (compare with the vorticity equation \eqref{eq.Omeg}) 
and plays an important role in the analysis, especially in deriving uniform estimates with respect to the Prandtl number $\PPr$ 
(note that the constitutive relation of the 3D Stokes-transport system \eqref{eq:ST} can be written as 
$\Omega = (\mathcal{R}_{-1,2}\theta, - \mathcal{R}_{-1,1}\theta,0)^t$, i.e., $\Gamma\equiv 0$ in \eqref{def:Gamma}).

In order to prove the propagation of $C^{1,\gamma}$ regularity, since $\partial D(t)=X_t(\partial D_0)$, 
and $\partial D_0$ is determined by the charts $\{V_\beta,\varphi_\beta\}_{1\leq \beta \leq m}$ satisfying \eqref{def:varphi-beta},
it suffices to prove that the velocity field $\nabla u$ belongs to $L^1_T(C^{\gamma})$;
according to Lemma \ref{lem:flow}, this directly implies $\nabla X_t\in L^\infty(0,T;C^{\gamma})$, 
where $X_t:\mathbb{R}^3\mapsto \mathbb{R}^3$ is the flow map satisfying the ODE \eqref{eq:flowmap}. 
In view of the Biot-Savart law, we decompose $\nabla u$ as follows: 
\begin{align}\label{eq.v.GamThe0}
  \nabla u = (-\Delta)^{-1}\nabla\nabla \wedge \Omega 
  = \Lambda^{-2}\nabla\nabla\wedge \Gamma 
  + \nabla^2\partial_3  \Lambda^{-4} \theta  + \Lambda^{-2}\nabla \theta\otimes e_3,
\end{align}
with $\Lambda: = (-\Delta)^{1/2}$.
Using the equation \eqref{eq.Gamm} of good unknown $\Gamma$, commutator estimates, and the regularity of $u$ in the existence result, 
we can obtain the desired estimate for $\Gamma$ that is uniform in $\PPr$. 
Since $\theta$ belongs to $L^1\cap L^\infty$ uniformly in time,
it is easy to see that $\nabla^2\partial_3  \Lambda^{-4} \theta$ and $\Lambda^{-2} \nabla\theta $
belong to $L^1_T(C^\gamma)$ for all $\gamma\in (0, 1)$; therefore, the desired bound $\nabla u \in L^1_T(C^\gamma)$ follows immediately;
see Propositions \ref{prop:ap-es1} and \ref{prop:ap-es2} for more details.
 
For the propagation of $W^{2,\infty}$ or $C^{2,\gamma}$ boundary regularity, thanks to Lemma \ref{lem:sr-cond}, 
it suffices to prove that 
\begin{align*}
  \textrm{$\WW\in L^\infty(0,T;W^{1,\infty})$\; or \;$\WW\in L^\infty(0,T;C^{1,\gamma})$\; uniformly in \;$\PPr$}, 
\end{align*}
where $\mathcal{W}(t) =\{W^i(t)\}_{1\leq i\leq 5}$ satisfying equation \eqref{eq:Wi} 
is the system of tangential divergence-free vector fields related to the patch boundary $\partial D(t)$.
Note that compared to \cite{LLX24}, we adopt a different approach to show the global persistence of $W^{2,\infty}$-boundary regularity,
which can be similarly extended to the proof of higher $W^{k,\infty}$-regularity persistence with $k\geq 3$ (see Remark \ref{rem:high-reg}).
%Next our target is to prove the regularity of $\WW$. 
%The maximum principle of \eqref{eq:Wi} and the fact that $\nabla u \in L^1_T(C^\gamma)$ 
%yields the $L^\infty(0,T;L^\infty)$ norm of $\WW$. 
To estimate the $L^\infty(0,T;L^\infty)$-norm of $\nabla \WW$, 
we apply $\nabla$ to equation \eqref{eq:Wi} and use the maximum principle; the main issue is then to bound 
$\|\partial_\WW \nabla u\|_{L^1(0,T;L^\infty)}$. 
In view of the decomposition \eqref{eq.v.GamThe0} and the regularity of $\Gamma$, it only needs to prove that
$\partial_\WW \nabla^3 \Lambda^{-4}\theta,\,\partial_\WW \nabla\Lambda^{-2} \theta \in L^1(0,T;L^\infty)$.
Take the term $\partial_\WW \nabla^3 \Lambda^{-4}\theta$ as an example. We split it as
\begin{align}\label{eq:fact-commutator}
  \partial_\WW\nabla^3 \Lambda^{-4}\theta
  = \nabla^3 \Lambda^{-4}\partial_\WW\theta
  - [\nabla^3 \Lambda^{-4},\WW\cdot\nabla]\theta,
\end{align}
where $[A,B]=AB-BA$ denotes the usual commutator.
To handle the commutator term, we establish a key commutator estimate \eqref{es:prod-es6}. 
As for the term involving $\partial_\WW\theta$, we use Lemma \ref{lem:str-reg} together with the fact that 
$\partial_\WW\theta$ satisfies a transport equation to obtain $\partial_\WW\theta \in L^\infty(0,T;C^{\mu-1})$, 
which by a direct computation yields $\nabla^3 \Lambda^{-4}\partial_\WW\theta \in L^\infty(0,T;C^\mu)$. 
Combining the above bounds, we derive an estimate showing that $\|\nabla \WW(t)\|_{L^\infty}$ is controlled 
by the time integral of $\|\nabla \WW\|_{L^\infty}$ weighted by $(\|\nabla u\|_{L^\infty}+1)$. 
The desired global bound then follows from Gr\"onwall's inequality.

To estimate the $L^\infty(0,T; C^{1,\gamma})$-norm of $\WW$, 
in light of the equation of $\nabla \mathcal{W}$ and the product estimate in $C^\gamma$, 
it suffices to control $\|\partial_\WW \nabla u\|_{L^1(0,T;C^\gamma)}$. 
Owing to the decomposition \eqref{eq.v.GamThe0}, we treat $\partial_\WW\Gamma$ and $\partial_\WW\theta$ separately. 
For the term $\partial_\WW\Gamma$, in order to impose an optimal assumption on $u_0$ 
(note that we have relaxed the scope of $p$ from $3<p<\infty$ in \cite{LLX24} to $\frac{3}{2-\gamma}<p<\infty$), 
we decompose $\partial_\WW\Gamma$ into two parts: one solves equation \eqref{eq:F1} with zero initial data, 
and the other solves equation \eqref{eq:F2} with zero forcing.
We then apply the smoothing estimates \eqref{TD-sm-es} and \eqref{eq:TD-sm5} to these two equations. 
By exploiting the crucial product estimate \eqref{eq:prodBes-endp} and the commutator estimate \eqref{eq:R-1cm-es1}, 
we obtain a uniform-in-$\PPr$ bound for $\partial_\WW\Gamma$ in $L^1_T(B^\gamma_{\infty,1})$, 
which, together with the striated estimate \eqref{eq:prod-es4}, 
yields a satisfactory control of $\partial_\WW (\nabla^2\Lambda^{-2}\Gamma)$ in $L^1_T(C^\gamma)$. 
As for the term $\partial_\WW\theta$, we combine Lemma \ref{lem:str-reg} with the striated estimate \eqref{eq:prod-es4}, 
to obtain a uniform-in-$\PPr$ upper bound for both $\partial_\WW\nabla^3 \Lambda^{-4}\theta$ and 
$\partial_\WW\nabla\Lambda^{-2}\theta$ in $L^1_T(C^\gamma)$. 
Collecting all these estimates and using Gr\"onwall's inequality, we obtain the desired bound of $\|\WW\|_{L^\infty(0,T:C^{1+\gamma})}$ 
that is uniform with respect to $\PPr$,
thereby completing the proof of Theorem \ref{thm:exi-reg}.

The proof of Theorem \ref{thm:limit} relies primarily on the uniform-in-$\PPr$ estimates established in Theorem \ref{thm:exi-reg}, 
together with standard compactness arguments. Since $\nabla u^{\PPr}$ is uniformly bounded in 
$L^\infty(0,T;L^2(\R^3)) \cap L^1(0,T;L^\infty(\R^3))$, 
the Sobolev embedding $\dot{H}^1(\R^3)\hookrightarrow L^6(\R^3)$ and interpolation imply that 
$u^{\PPr}$ is uniformly bounded with respect to $\PPr$ in the space 
$L^{\frac{3}{2}}(0,T;W^{1,6}(\R^3))$, which leads to the weak convergence \eqref{eq:u-weak-conv}. 
From the equation governing $\theta^{\PPr}$ and its uniform boundedness, 
we apply the Aubin-Lions lemma to show the strong convergence of $\theta^{\PPr}$, 
as stated in \eqref{eq:the-str-conv}. 
Consequently, passing to the limit $\PPr \to +\infty$, we deduce that the sequence $(u^{\PPr},\theta^{\PPr})$ 
converges to a limit pair $(u,\theta)$ that solves the 3D Stokes--transport system \eqref{eq:ST} in the distributional sense. 
A more detailed formulation of this convergence result for initial data $(u_0^{\PPr},\theta_0^{\PPr})\in H^1\cap (L^1\cap L^6)$ 
is provided in Proposition \ref{prop:weak-limit}. 
Furthermore, by analyzing the convergence properties of the flow maps $X^{\PPr,\pm1}_t$ and the associated vector fields 
$\WW^{\PPr}$ in suitable functional frameworks, we are able to propagate the geometric structure of patch solutions in the limit $\PPr\rightarrow \infty$. 
As a result, the limit function $\theta$ globally preserves the patch structure, as well as the 
$C^{1,\gamma}$, $W^{2,\infty}$ and $C^{2,\gamma}$ regularity of the patch boundary $\partial D(t)$, 
as stated in Theorem \ref{thm:limit}.
\vskip1mm

The paper is organized as follows. In the next section,
we introduce the notion of an admissible conormal vector system adapted to the temperature patch, together with its useful properties.
We also compile various estimates---including product estimates, commutator estimates, striated estimates, 
and smoothing estimates---in the framework of Besov spaces, and record several auxiliary lemmas.
In Section \ref{sec:exi-uni}, we present the proof for the global existence and uniqueness part of Theorem \ref{thm:exi-reg} 
in the regime of large Prandtl number.
Section \ref{sec:C1gam} is dedicated to establishing the regularity persistence results for patch solutions stated in Theorem \ref{thm:exi-reg}; 
namely, we prove that the boundary regularity of the non-constant temperature patch (also called temperature front) 
in the spaces $C^{1,\gamma}$, $W^{2,\infty}$, $C^{2,\gamma}$ is globally persisted along the evolution of 3D Boussinesq system \eqref{eq:BousEq}. 
Finally, in Section~\ref{sec:limit} we prove the convergence result when the Prandtl number tends to infinity: 
the constructed global strong solution of the 3D Boussinesq system \eqref{eq:BousEq} 
converges to the unique global solution of the 3D Stokes-Transport system \eqref{eq:ST} (Proposition \ref{prop:weak-limit}), 
and moreover, the limit patch solution of \eqref{eq:ST} globally propagates the $C^{1,\gamma}$, $W^{2,\infty}$, 
$C^{2,\gamma}$ regularity of the patch boundary (Theorem \ref{thm:limit}).

\section{Preliminaries}\label{sec:pril}
\subsection{The admissible conormal vector system}
The conormal (i.e. striated) vector fields play an important role in obtaining the boundary regularity persistence of patches. 
Let us first present the definition of an admissible system of conormal vectors (see \cite{GSR95}).
\begin{definition}\label{definition-W}
A system $\mathcal W = (W^1 , W^2 ,\cdots, W^N )$ of $N$ continuous
vector fields is said to be admissible if the function
\begin{align*}
  [\mathcal W]^{-1}\stackrel{\mathrm{def}}{=}\Big(\frac 2{N(N-1)}
  \sum_{\mu<\nu}|W^\mu\wedge W^\nu|^2\Big)^{-\frac {1}{4}} < \infty,
\end{align*}
where the wedge product $X\wedge Y$ is defined as
$X \wedge Y= (X_2Y_3-X_3Y_2,X_3Y_1-X_1Y_3,X_1Y_2-X_2Y_1)^t $ for any two vector fields $X = (X_1 ,X_2 ,X_3 )^t$ and
$Y = (Y_1 ,Y_2 ,Y_3)^t$.
\end{definition}

Let $D_0 \subset \mathbb{R}^3$ be a bounded simple-connected smooth domain with boundary 
$\partial D_0 \in C^{k,\gamma}$, $k\in \mathbb{Z}^+$, $\gamma\in (0,1)$.
Referring to \cite{GSR95}, we have the following result about the existence of an admissible system 
of divergence-free tangential vector fields for a two-dimensional submanifold $\partial D_0$ of $\mathbb{R}^3$.
\begin{proposition}\label{prop:5vec}
  For any compact two-dimensional $C^{k,\gamma}$ submanifold $\partial D_0$ of $\R^3$,
we can find an admissible system $\mathcal{W}_0$ consisting of five divergence-free $C^{k-1,\gamma}$-vector fields tangent to $\partial D_0$.
\end{proposition}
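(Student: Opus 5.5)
The plan is to realize $\partial D_0$ as a regular level set of a defining function and to generate tangential divergence-free fields by cross products of gradients. Since $\partial D_0$ is a compact $C^{k,\gamma}$ surface, we fix a defining function $\rho\in C^{k,\gamma}(\R^3)$ with $D_0=\{\rho>0\}$, $\partial D_0=\{\rho=0\}$, and $\nabla\rho\neq0$ in a tubular neighbourhood $V$ of $\partial D_0$. The key observation is that for any $C^{k,\gamma}$ function $f$, the field
\begin{align*}
  W_f := \nabla\rho\wedge\nabla f
\end{align*}
belongs to $C^{k-1,\gamma}$. It is divergence-free, because $\divg(\nabla a\wedge\nabla b)=\nabla b\cdot(\nabla\wedge\nabla a)-\nabla a\cdot(\nabla\wedge\nabla b)=0$; this holds in the distributional sense when $k=1$, by approximating $\rho$ and $f$ with smooth functions and using $C^{0,\gamma}$ convergence of the gradients. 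It is also tangent to $\partial D_0$, since $W_f\cdot\nabla\rho=0$ and $\nabla\rho$ is normal to the level sets of $\rho$.

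We take $W^i=\nabla\rho\wedge\nabla x_i = \nabla\rho\wedge e_i$ for $i=1,2,3$. At each point of $V$, the vectors $\nabla\rho\wedge e_i$ span the plane orthogonal to $\nabla\rho$. Consequently some pair among them has a wedge product of modulus at least $c|\nabla\rho|^2>0$. This follows from the identity $(\nabla\rho\wedge e_i)\wedge(\nabla\rho\wedge e_j)=(\nabla\rho\cdot(e_i\wedge e_j))\nabla\rho$, which gives
\begin{align*}
  \sum_{i<j}\big|(\nabla\rho\wedge e_i)\wedge(\nabla\rho\wedge e_j)\big|^2=|\nabla\rho|^6 .
\end{align*}
Hence $[\mathcal W]^{-1}$ is bounded on $V$, by compactness and $\inf_V|\nabla\rho|>0$.

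The remaining difficulty, which we expect to be the main obstacle, is to keep the system admissible on all of $\R^3$. Away from $\partial D_0$, $\nabla\rho$ may vanish, and the fields must remain divergence-free and globally defined; this is exactly why five fields are used instead of three. We therefore add two more fields of the same form that are nondegenerate away from the surface. Let $\chi\in C^\infty_c(V)$ equal $1$ near $\partial D_0$, and let $\tilde\rho$ be a smooth function with $\tilde\rho=\rho$ outside a small neighbourhood of $\partial D_0$, chosen so that $\nabla\tilde\rho\wedge e_i$ does not degenerate where $\chi\neq1$. The natural divergence-free replacement of a cutoff field is $\nabla(\chi\rho)\wedge e_i$, which coincides with $\nabla\rho\wedge e_i$ near $\partial D_0$.

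For the additional fields, we take $W^4=\nabla x_3\wedge\nabla\psi_1$ and $W^5=\nabla x_1\wedge\nabla\psi_2$. Here $\psi_1,\psi_2$ are $C^{k,\gamma}$ functions vanishing to suitable order near $\partial D_0$ (for instance functions of $\rho^2$ composed with cutoffs, so that $W^4,W^5$ vanish on $\partial D_0$ and stay tangent there) and equal to $x_1$ and $x_2$ respectively in the complement of a neighbourhood of the surface. Then $W^4=e_3\wedge e_1$-type constant fields far away, which together with $W^5$ and the remaining ones give a uniformly positive wedge sum.

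In the intermediate region, we check positivity of $\sum_{\mu<\nu}|W^\mu\wedge W^\nu|^2$ by a compactness and continuity argument. We adjust the cutoff scales so that on the overlap where both groups are active the two groups do not cancel; a generic choice works after a small perturbation of $\psi_j$ by linear functions. Divergence-freeness and tangency hold by construction, since every field is a wedge of two gradients and those not of the form $\nabla\rho\wedge\cdot$ vanish on $\partial D_0$. Regularity $C^{k-1,\gamma}$ is immediate, which yields the admissible system $\mathcal W_0=\{W^i\}_{1\le i\le5}$.
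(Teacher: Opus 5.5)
Your strategy is the standard one behind this statement. The paper gives no proof of its own and takes the result from Gamblin--Saint-Raymond. The construction is three fields $\nabla\rho\wedge e_i=\nabla\wedge(\rho e_i)$, which are automatically divergence-free and tangent to every level set of $\rho$. To these one adds two divergence-free fields that equal independent constant vectors away from $\partial D_0$ and vanish near it. However, you never actually carry out the step you yourself flag as the main obstacle, and the mechanism you propose for it is wrong.

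\textbf{Problems with the final step.}

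First, there is no possible ``cancellation'' between the two groups. The admissibility quantity is a sum of nonnegative squares, so $\sum_{\mu<\nu}|W^\mu\wedge W^\nu|^2\ge\sum_{i<j\le 3}|W^i\wedge W^j|^2=|\nabla\rho|^4$. (Your identity gives $\sum_k(\partial_k\rho)^2|\nabla\rho|^2=|\nabla\rho|^4$, not $|\nabla\rho|^6$.)

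Second, a ``small perturbation of $\psi_j$ by linear functions'' breaks tangency. Replacing $\psi_1$ by $\psi_1+\ell$ adds the constant vector $e_3\wedge\nabla\ell$ to $W^4$, which by your design vanishes on $\partial D_0$. Either this constant is zero, and the perturbation does nothing, or it is a nonzero constant field $a$. In the latter case $a$ is normal to $\partial D_0$ at a point maximizing $x\cdot a$ over the surface. The same applies to $W^5$.

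Third, the paragraph introducing $\tilde\rho$ is unusable. Deep inside $D_0$ you have $\tilde\rho=\rho$, and $\rho$ attains an interior maximum there, outside $V$ and hence where $\chi\neq 1$. So the requirement that $\nabla\tilde\rho\wedge e_i$ not degenerate on $\{\chi\neq1\}$ cannot be met. Since you never use $\tilde\rho$ or $\nabla(\chi\rho)\wedge e_i$ afterwards, simply delete this paragraph.

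\textbf{The fix.} It takes one line and needs no genericity.
\begin{enumerate}
\item Keep $W^i=\nabla\rho\wedge e_i$ ($i\le 3$) uncut, with $\rho\in C^{k,\gamma}$ constant outside a large ball, so that all fields are globally $C^{k-1,\gamma}$.
\item Let $V$ be a neighbourhood of $\partial D_0$ on which $|\nabla\rho|\ge c>0$, and take $\chi\in C_c^\infty(V)$ with $\chi=1$ near $\partial D_0$.
\item Set $\psi_1=(1-\chi)x_1$ and $\psi_2=(1-\chi)x_2$, so that $W^4=e_3\wedge\nabla\psi_1$ and $W^5=e_1\wedge\nabla\psi_2$ are smooth, bounded with bounded derivatives, and divergence-free.
\end{enumerate}
These fields vanish near $\partial D_0$, so tangency is trivial. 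On $\R^3\setminus\mathrm{supp}\,\chi$ they equal $e_2$ and $e_3$.

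Hence the wedge sum is at least $c^4$ on $V$, and at least $|e_2\wedge e_3|^2=1$ on $\R^3\setminus\mathrm{supp}\,\chi$. These two sets cover $\R^3$, which gives a uniform bound on $[\mathcal W_0]^{-1}$. The only thing that needs checking is that the transition region of the extra fields lies inside the region where the first triple is already nondegenerate.
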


Consider a non-constant temperature patch $\theta_0(x) = \overline{\theta}_0(x) \,\mathbf{1}_{D_0}(x)$, with $D_0\subset \mathbb{R}^3$ 
as above and the patch boundary $\partial D_0$ a two-dimensional compact $C^{k,\gamma}$-submanifold, 
according to Proposition \ref{prop:5vec}, there exists an admissible system $\mathcal W_0 = \{W^1_0, \cdots, W^5_0\}$ such that
\begin{equation}\label{W-i-0}
\begin{split}
  \divg W^i_0=0 \;\;\mathrm{and}\;\; W^i_0\in C^{k-1,\gamma}(\R^3)\,\;\mathrm{are \,\,tangent\,\, to \,\,}\partial D_0,  \quad i = 1,\cdots,5.
\end{split}
\end{equation}

As in the vorticity patch problem for the incompressible Euler equations 
(see e.g., \cite{Chemin93,GSR95}),
we consider the evolution of the vectors $\mathcal{W}(t) = \{W^1(t), \cdots, W^5(t)\}$ 
where $W^i(t)$ is a solution of the following equation
\begin{equation}\label{eq:Wi}
  \partial_t W^i + u \cdot\nabla W^i = W^i\cdot\nabla u = \partial_{W^i}u, 
  \quad\quad W^i|_{t=0}(x)=W^i_0(x),
\end{equation}
for all $i=1,\cdots,5$ and all initial data $W^i_0$ satisfying \eqref{W-i-0}, where the velocity field $u$ is divergence-free and satisfies 
$\eqref{eq:BousEq}_2$.
Since $\divg W^i$ satisfies the transport equation
\begin{equation}\label{eq.divX}
  \partial_t(\divg W^i) + u\cdot\nabla (\divg W^i)=0,\qquad \divg W^i|_{t=0}=\divg W^i_0 =0,
\end{equation}
we see that $W^i(t,x)$ is still divergence-free.
According to \cite[Lemma 1.4]{MB02}, we also have
\begin{align}\label{exp:Wit}
  W^i(t,x) = (\partial_{W^i_0}X_t)\big(X_t^{-1}(x)\big),\quad i=1,\cdots,5,
\end{align}
where $X_t:\R^3\rightarrow \R^3$ is the flow map that satisfies the ODE \eqref{eq:flowmap} with its inverse $X_t^{-1}$.

Referring to \cite[Lemma 2.3]{LLX24} or \cite{GSR95}, 
we have the following key lemma showing the deep relationship between the (higher-order) boundary regularity of $\partial D(t)$
and the striated regularity of the system $\mathcal{W}(t)=\{W^i(t)\}_{1\leq i\leq 5}$. 

\begin{lemma}\label{lem:sr-cond}
 Let $T>0$,  $k\ge 2$ be an integer and $\gamma\in(0,1)$.
Let $X_t(\cdot): \R^3\rightarrow \R^3$ defined by \eqref{eq:flowmap} be the measure-preserving bi-Lipschitzian flow map on $[0,T]$.
Then, the temperature patch boundary $\partial D(t)=X_t( \partial D_0)$ preserves its 
$C^{k,\gamma} (\text{or}\,\,W^{k, \infty})$-regularity on the time interval $[0,T]$, provided that
\begin{align}\label{eq:targ-sr}
  \textrm{$\partial^\ell_{\mathcal{W}} \mathcal{W} \in L^\infty([0,T]; C^\gamma(\R^3))\; (\text{or}\,\,L^\infty([0,T]; L^\infty(\R^3)))$ 
  \quad for all \quad  $0\leq \ell\leq k-1$.}
\end{align}
\end{lemma}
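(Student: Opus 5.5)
The plan is to work chart by chart. Since $\partial D(t)=X_t(\partial D_0)$ and the charts of $\partial D(t)$ are $X_t\circ\varphi_\beta$, it suffices to show that $X_t\circ\varphi_\beta\in C^{k,\gamma}$ (resp. $W^{k,\infty}$) with bounds uniform on $[0,T]$. First-order derivatives are reduced to the vector fields $\mathcal W(t)$ through the representation \eqref{exp:Wit}, $W^i(t,X_t(y))=\partial_{W^i_0}X_t(y)$.

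\textbf{Step 1: the initial tangent frame.} On $\partial D_0$, the admissibility of $\mathcal W_0$ gives, at each point of $V_\beta\cap\partial D_0$, two fields $W_0^{\mu},W_0^{\nu}$ with $|W_0^\mu\wedge W_0^\nu|\ge c>0$. Locally we may therefore write the coordinate tangents as
\begin{equation*}
\partial_{s_j}\varphi_\beta(s)=\sum_{i}a^i_{j}(s)\,W^i_0(\varphi_\beta(s)),\qquad j=1,2,
\end{equation*}
with coefficients $a^i_j\in C^{k-1,\gamma}$ (resp. $W^{k-1,\infty}$). These coefficients are obtained by Cramer's rule from the tangent vectors and $\mathcal W_0$; they are regular because both $\varphi_\beta$ and $\mathcal W_0$ are regular, by Proposition \ref{prop:5vec}. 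A partition of unity subordinate to the choice of the nondegenerate pair handles the fact that this pair depends on the point.

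\textbf{Step 2: transport of the frame.} Differentiating along the flow gives
\begin{equation*}
\partial_{s_j}(X_t\circ\varphi_\beta)(s)=\sum_i a^i_j(s)\,W^i(t,X_t(\varphi_\beta(s))).
\end{equation*}
The coefficients $a^i_j$ are time independent. Higher $s$-derivatives are computed by the chain rule: $\partial_{s_l}[W^i(t,X_t\circ\varphi_\beta)]=\sum_m a^m_l\,(\partial_{W^m}W^i)(t,X_t\circ\varphi_\beta)$. Inductively, $\partial_s^{\alpha}(X_t\circ\varphi_\beta)$ with $|\alpha|=k$ is a finite sum of products of derivatives of the $a$'s of order at most $k-1$ and of the quantities $\partial^{\ell}_{\mathcal W}\mathcal W$, $0\le\ell\le k-1$, evaluated at $X_t\circ\varphi_\beta$. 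The structure is that each derivative either hits a coefficient or produces one more striated derivative, so exactly the quantities in \eqref{eq:targ-sr} appear.

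\textbf{Step 3: Hölder control of the composition.} In the $W^{k,\infty}$ case, $L^\infty$ bounds on $\partial^\ell_{\mathcal W}\mathcal W$ together with bounded coefficients give the claim immediately. In the $C^{k,\gamma}$ case we need the $C^\gamma$ seminorm in $s$ of the terms above. For this we use that $X_t$ is bi-Lipschitz on $[0,T]$ and $\varphi_\beta$ is Lipschitz, so composition $f\mapsto f\circ X_t\circ\varphi_\beta$ maps $C^\gamma(\R^3)$ boundedly into $C^\gamma(U_\beta)$, with a constant $\|\nabla X_t\|_{L^\infty}^\gamma\|\nabla\varphi_\beta\|^\gamma_{L^\infty}$. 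Since $C^\gamma$ is an algebra, Step 2 then yields $\partial_s^\alpha(X_t\circ\varphi_\beta)\in L^\infty_TC^\gamma$. Finally, to know that $X_t\circ\varphi_\beta$ is a genuine chart of a $C^{k,\gamma}$ surface, we check nondegeneracy: $|\partial_{s_1}\wedge\partial_{s_2}(X_t\circ\varphi_\beta)|$ is bounded below. This follows from $\det\nabla X_t=1$ and $\nabla X_t^{-1}\in L^\infty$, since $\nabla X_t$ maps the nondegenerate initial tangent pair to the evolved pair.

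\textbf{Main obstacle.} The main difficulty I expect is the bookkeeping in Step 2: showing rigorously, by induction on $k$, that no derivative of $\nabla X_t$ of order higher than what is controlled by $\partial^\ell_{\mathcal W}\mathcal W$ appears. The key is to always express $s$-derivatives through $\partial_{W^m}$ acting on functions of $(t,x)$ evaluated at $x=X_t\circ\varphi_\beta$, rather than differentiating $X_t$ directly. A secondary, milder point is the low regularity of $\mathcal W$ away from $\partial D_0$. This is harmless because only values on $\partial D(t)$ matter, and the coefficients $a^i_j$ live on the parameter domain.
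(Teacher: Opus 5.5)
The paper gives no proof of its own here; it only refers to \cite[Lemma~2.3]{LLX24} and \cite{GSR95}. Your argument is correct and is the standard proof behind that citation: the coefficients $a^i_j$ are fixed at $t=0$, the frame is transported through \eqref{exp:Wit}, each further $s$-derivative either hits a coefficient or produces one more $\partial_{\mathcal W}$, and you finish by composing with the Lipschitz map $X_t\circ\varphi_\beta$. It is the 3D analogue of the 2D identity $\partial_s^k z=(\partial_W^{k-1}W)(z)$. The point needing more care than the bookkeeping you single out is justifying these chain rules on the null set $\partial D_0$ when $X_t$ is only Lipschitz and $\mathcal W$ only H\"older. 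For instance, prove $X_t(\Psi^i_\tau y)-X_t(y)=\int_0^\tau W^i(t,X_t(\Psi^i_\sigma y))\,d\sigma$ for a.e.\ $y$, with $\Psi^i$ the flow of $W^i_0$, and extend it to all $y$ by continuity. In the $W^{k,\infty}$ case, do not evaluate the $L^\infty$ function $\partial^{k-1}_{\mathcal W}\mathcal W$ on a surface; instead bound the Lipschitz constant of $(\partial^{k-2}_{\mathcal W}\mathcal W)\circ X_t\circ\varphi_\beta$. Both issues are harmless in the paper's applications, where $X_t\in C^{1,\gamma}$ and $\mathcal W\in W^{1,\infty}$ or $C^{1,\gamma}$.
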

\noindent In the above, we have used the vector-valued notation $\partial_{\mathcal{W}}=\mathcal{W} \cdot \nabla 
= \left\{W^i \cdot \nabla\right\}_{1 \leq i \leq N}$,
and $\partial_{\mathcal{W}}^\lambda=\big\{\partial_{W^1}^{\lambda_1} \cdots \partial_{W^N}^{\lambda_N}: \lambda_1+
\cdots+\lambda_N=\lambda, \lambda_i \in \mathbb{N}\big\}$
for all $\lambda \in \mathbb{N}$.

The following lemma presents the striated estimate of the initial temperature patch, 
which can be proved as \cite[Lemma 2.4]{LLX24} or \cite[Lemma 2.6]{CMX22} 
(note that it requires that $\partial D_0\in C^{k,\gamma}$ in \cite{CMX22,LLX24}, 
but the condition $\partial D_0\in W^{k-1,\infty}$ 
suffices to get the conclusion by checking the proof).
\begin{lemma}\label{lem:str-reg}
Let $k\geq 2$ and $\gamma\in(0,1)$. 
Assume that $D_0 \subset \DD$ is a bounded simply connected domain with boundary
$\partial D_0\in W^{k-1,\infty}$ and $\theta_0(x) = \overline{\theta}_0(x) \mathbf{1}_{D_0}(x)$ with 
$\overline{\theta}_0 \in C^{k-2,\mu}(\overline{D_0})$.
%\footnote{Here for a Lipschitz domain $\Omega\subset \R^d$, the Besov space $B^{s,n}_{p,r}(\Omega)$
%consists of those distributions $f\in \mathcal{D}'(\Omega)$ which have extensions beyond $\Omega$ belonging to $B^{s,n}_{p,r}(\R^d)$,
%and it is endowed with the quotient-space quasi-norms (e.g. see \cite{Ryc99}).}
Let $\mathcal{W}_0 = \{W^i_0\}_{1\leq i\leq 5}$ be defined as in \eqref{W-i-0}. 
Then we have 
%\begin{align}\label{pWj-theta}
%  \partial_{\WW_0}^{j} \theta_0 \in L^1\cap L^{\infty}(\R^3),\quad 
%  \forall j\in \{0,1,2,\cdots,k-2\},
%\end{align}
%and 
\begin{align}\label{pWk-theta}
  \partial_{\WW_0}^{k-1} \theta_0 \in C^{-1,\mu}(\R^3) = B^{\mu-1}_{\infty,\infty}(\mathbb{R}^3).
\end{align}
\end{lemma}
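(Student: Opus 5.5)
The plan is to argue by induction on $j=0,1,\dots,k-1$, tracking $\partial_{\WW_0}^j\theta_0$ as a piecewise function that jumps across $\partial D_0$. The key structural fact is that each $W^i_0$ is divergence-free and tangent to $\partial D_0$. Hence, in the sense of distributions,
\[
\partial_{W^i_0}(\overline{\theta}_0\mathbf{1}_{D_0})=\divg(W^i_0\,\overline{\theta}_0\mathbf{1}_{D_0})=(\partial_{W^i_0}\overline{\theta}_0)\,\mathbf{1}_{D_0}.
\]
To justify this, pair against a test function $\phi$ and integrate by parts on $D_0$. The boundary term $\int_{\partial D_0}\overline{\theta}_0\phi\,W^i_0\cdot \mathbf{n}\,\dd\sigma$ vanishes by tangency. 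Since $\partial D_0$ is only Lipschitz-type regular ($W^{k-1,\infty}$ with $k\ge2$), I would carry this out by approximating $D_0$ with interior smooth domains, or by using the divergence theorem on Lipschitz domains.

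Iterating this identity for $j\le k-2$ expresses $\partial_{\WW_0}^{j}\theta_0=(\partial_{\WW_0}^j\overline{\theta}_0)\mathbf{1}_{D_0}$. This requires $\partial_{\WW_0}^j\overline{\theta}_0$ to make sense as a bounded (indeed $C^{k-2-j,\mu}$) function on $\overline{D_0}$. That in turn needs $W^i_0\in C^{k-2,\gamma'}$ or $W^{k-2,\infty}$ regularity for the fields. Proposition \ref{prop:5vec} (read with $W^{k-1,\infty}$ boundaries, where the construction gives $W^{k-2,\infty}$ fields) supplies exactly this. The vector fields are built from the defining function of $\partial D_0$, so each derivative of the boundary charts costs one derivative of $\WW_0$.

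At the top level $j=k-2$, the function $g:=\partial_{\WW_0}^{k-2}\overline{\theta}_0$ lies in $C^{\mu}(\overline{D_0})$ (possibly with $\mu$ replaced by $\min(\mu,\cdot)$, harmless after lowering the exponent). I would extend $g$ to a compactly supported $\tilde g\in C^\mu(\R^3)$ via a Whitney/Stein extension. Then $\partial_{\WW_0}^{k-1}\theta_0=\sum_i$ terms of the form $\partial_{W^i_0}(\tilde g\,\mathbf{1}_{D_0})=\divg(W^i_0\,\tilde g\,\mathbf{1}_{D_0})$. So it suffices to show that $W^i_0\tilde g\mathbf{1}_{D_0}\in B^{\mu}_{\infty,\infty}$ fails in general but its divergence lies in $B^{\mu-1}_{\infty,\infty}$.

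I expect this final step to be the main obstacle, since $\mathbf{1}_{D_0}\notin C^\mu$. I would first try the same integration by parts: for $x$ near $\partial D_0$, write $\divg(W^i_0\tilde g\mathbf{1}_{D_0})=\mathbf{1}_{D_0}\,\divg(W^i_0\tilde g)$ in distributions, again killing the boundary term by tangency. Here $\divg(W^i_0\tilde g)=\partial_{W^i_0}\tilde g\in C^{\mu-1}$ as a distribution on $\R^3$. The remaining task is the multiplier statement that $\mathbf{1}_{D_0}\cdot$ preserves $C^{\mu-1}$ for tangential derivatives of $C^\mu$ functions. I would prove it directly from the Littlewood–Paley characterization: $\Delta_q$ applied to $\divg(W^i_0\tilde g\mathbf{1}_{D_0})$ equals $-\int \nabla h_q(x-y)\cdot W^i_0(y)\tilde g(y)\mathbf{1}_{D_0}(y)\,\dd y$. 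Subtracting $\tilde g(x)$ and using $\int_{D_0}\nabla h_q(x-y)\cdot W^i_0(y)\,\dd y=0$ (tangency plus divergence-free) leaves $\int|\nabla h_q(x-y)||y-x|^\mu\,\dd y\lesssim 2^{q(1-\mu)}$, with $W^{1,\infty}$ regularity of $W^i_0$ handling the residual variation. This gives $\|\Delta_q\partial_{\WW_0}^{k-1}\theta_0\|_{L^\infty}\lesssim 2^{q(1-\mu)}$, i.e.\ membership in $B^{\mu-1}_{\infty,\infty}$. The low-frequency block is trivially bounded, which completes \eqref{pWk-theta}.
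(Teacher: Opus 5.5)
Your argument is correct, and its structure is the standard one. The paper does not prove this lemma itself: it cites [CMX22, Lemma 2.6] and [LLX24, Lemma 2.4] and remarks that $\partial D_0\in W^{k-1,\infty}$ suffices.

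The reduction is the usual one. Tangency and $\divg W^i_0=0$ give $\partial_{W^i_0}(f\mathbf{1}_{D_0})=(\partial_{W^i_0}f)\mathbf{1}_{D_0}$ for $f\in C^1(\overline{D_0})$. Everything therefore reduces to estimating $\partial_{W^i_0}(\tilde g\,\mathbf{1}_{D_0})$ with $\tilde g\in C^\mu$.

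For this last step, the paper's own toolbox would proceed as follows. Observe that $u=\mathbf{1}_{D_0}W^i_0$ is a bounded divergence-free vector field on $\R^3$; this is exactly the tangency condition. Then $\partial_{W^i_0}(\tilde g\mathbf{1}_{D_0})=u\cdot\nabla\tilde g$, and the product estimate \eqref{eq:prod-es} applies with $\epsilon=1-\mu$, $p=r=\infty$, $\phi=\tilde g$, after a harmless mollification of $u$. Your Littlewood--Paley kernel computation, with the cancellation $\int_{D_0}\nabla h_q(x-y)\cdot W^i_0(y)\,\dd y=0$, is a direct proof of precisely this special case.

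What your route buys is that it avoids paraproducts. It also makes transparent that the top-level step uses nothing about $W^i_0$ beyond $\|W^i_0\|_{L^\infty}$ and $\divg(W^i_0\mathbf{1}_{D_0})=0$. Together with your derivative count for the intermediate steps, this is exactly why $W^{k-1,\infty}$ boundary regularity suffices, as the paper asserts.

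Two of your hedges are unnecessary.
\begin{itemize}
\item You subtract only $\tilde g(x)$, and the vanishing integral already contains $W^i_0(y)$. So no residual variation of $W^i_0$ appears, and no $W^{1,\infty}$ bound is needed in the final step.
\item No loss of the exponent $\mu$ occurs in the intermediate steps, because $W^{k-2,\infty}\subset C^{k-3,1}$ and $1\geq\mu$. Lowering $\mu$ would in fact not give the stated conclusion $B^{\mu-1}_{\infty,\infty}$, so it is good that it is not needed.
\end{itemize}
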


\subsection{Besov spaces and related estimates}
One can choose two nonnegative radial functions $\chi, \varphi \in C_c^{\infty}\left(\mathbb{R}^d\right)$
be supported respectively in the ball $\left\{\xi \in \mathbb{R}^d:|\xi| \leq \frac{4}{3}\right\}$
and the annulus $\big\{\xi \in \mathbb{R}^d: \frac{3}{4}\leq|\xi| \leq \frac{8}{3}\big\}$
such that (see \cite{BCD11})
\begin{equation*}
  \chi(\xi)+\sum_{j \geq 0} \varphi\left(2^{-j} \xi\right)=1 \quad \forall\xi \in \mathbb{R}^d, 
  \quad \textrm{and} \quad  
  \sum_{j \in \mathbb{Z}} \varphi\left(2^{-j} \xi\right)=1 \quad \forall \xi \in \mathbb{R}^d\backslash\{0\}.
\end{equation*}
For all tempered distribution $f$, the dyadic block operators $\Delta_j$, $S_j$ and 
$\dot \Delta_j$ are defined by
\begin{equation}\label{eq:Del-Sj}
\begin{aligned}
  & \Delta_{-1} f=\chi(D) f, \qquad \Delta_j f=\varphi\left(2^{-j} D\right) f, 
  \quad \forall j \in \mathbb{N}, \\
  & S_j f=\chi\left(2^{-j} D\right) f = \sum_{-1 \leq l \leq j-1} \Delta_l f, 
  \quad \forall j \in \mathbb{N},\quad
  \dot{\Delta}_j f=\varphi\left(2^{-j} D\right) f, \quad \forall j \in \mathbb{Z}.
\end{aligned}
\end{equation}
%where $h_j(\cdot) := 2^{jd} h(2^j \cdot)$, $h:= \mathcal{F}^{-1} \varphi\in \mathcal{S}(\mathbb{R}^d)$,
%$\overline{h}_j(\cdot):= 2^{j d}
%\overline{h}(2^j \cdot)$, $\overline{h}:=\mathcal{F}^{-1} \chi\in \mathcal{S}(\mathbb{R}^d)$.
%and $\mathcal{F}^{-1}$ is the Fourier inverse transform.

For all $f, g \in \mathcal{S}^{\prime}\left(\mathbb{R}^d\right)$ (the space of tempered distributions), 
we have Bony's decomposition:
\begin{equation*}
  f g = T_f g+T_g f+R(f, g),
\end{equation*}
with
\begin{equation*}
  T_f g:= \sum_{q \in \mathbb{N}} S_{q-1} f \Delta_q g,
  \quad R(f, g):= \sum_{q \geq-1} \Delta_q f \widetilde{\Delta}_q g,
  \quad \widetilde{\Delta}_q:= \Delta_{q-1}+\Delta_q+\Delta_{q+1}.
\end{equation*}
%In what follows, for a vector field $W: \mathbb{R}^d \rightarrow \mathbb{R}^d$,
%we also use the notation $T_{W \cdot \nabla}$ to denote the operator 
%$\sum_{q \in \mathbb{N}} S_{q-1} W \cdot \nabla \Delta_q$. \\

Now, we introduce the nonhomogeneous/homogeneous Besov spaces $B_{p, r}^s\left(\mathbb{R}^d\right)$ and $\dot{B}_{p, r}^s\left(\mathbb{R}^d\right)$.
\begin{definition}\label{def:Besov-str}
Let $s\in \R$, $(p,r)\in [1,\infty]^2$. Denote by $B^s_{p,r}= B^s_{p,r}(\R^d)$ the space of tempered distributions 
$f\in \mathcal{S}'(\R^d)$ such that
\begin{align*}
  \|f\|_{B^s_{p,r}} := \big\| \big\{2^{qs}  \|\Delta_q f\|_{L^p}\big\}_{q\geq -1}  \big\|_{\ell^r}  < \infty.
\end{align*}
Denote by $\dot{B}^s_{p,r}= \dot{B}^s_{p,r}(\R^d)$ the space of tempered distributions 
$f\in \mathcal{S}'(\R^d)/\mathcal{P}(\R^d)$ such that
\begin{align*}
  \|f\|_{\dot{B}^s_{p,r}} := \big\| \big\{2^{qs}  \|\dot{\Delta}_q f\|_{L^p}\big\}_{q\in\mathbb{Z}}  \big\|_{\ell^r}  
  < \infty.
\end{align*}
\end{definition}

We also use the Chemin-Lerner mixed space-time Besov space
$\widetilde{L}^\rho\left([0, T] , B_{p, r}^s\right)$. which is the set of tempered distribution $g$ such that
$\|g\|_{\widetilde{L}_T^\rho\left(B_{p, r}^s\right)} := \big\|\big(2^{q s}\|\Delta_q g\|_{L_T^\rho(L^p)}\big)_{q \geq-1}\big\|_{\ell^r}<\infty$. 
Compared with the usual space-time norm
$\|g\|_{L^\rho_T (B^s_{p,r})} % =  \left\|  \left\| u \right\|_{(B^s_{p,r})_x}  \right\|_{L^\rho_t} 
  = \big\| \big\| \big(2^{sq}\left\| \Delta_q g \right\|_{L^p_x}\big)_{q\geq -1}\big\|_{\ell^r_q}  \big\|_{L^\rho_T}$,
from Minkowski's inequality we have
\begin{align*}
  L_{T}^{\rho} (B_{p, r}^s) \hookrightarrow \widetilde{L}_{T}^\rho (B_{p, r}^s),
  \quad \text { if }\; r \ge\rho,  \quad\quad
  L_T^\rho (B_{p, r}^s) \hookrightarrow \widetilde{L}_T^\rho (B_{p, r}^s),
  \quad \text { if }\; \rho \ge r.
\end{align*} 
In particular, $\widetilde{L}_T^r (B_{p, r}^s) = L_T^r (B_{p, r}^s)$. \vskip1mm

The following product estimates, commutator estimates and first-order striated estimate play an important role in the main proof.
\begin{lemma}\label{lem:str-es1}
Assume that $u$ is a smooth divergence-free vector field of $\mathbb{R}^d~(d \geq 2)$
and~$\mathcal{W}=\left\{W_i\right\}_{1 \leq i \leq N}$ ($N \in \mathbb{Z}^{+}$) is a set of smooth divergence-free vector fields.
Let $\phi: \mathbb{R}^d \rightarrow \mathbb{R}$ be a smooth function.
Let $m(D) = \Lambda^\sigma m_0(D)$, $\sigma\in \mathbb{R}$ and $m_0(D)$ be a zero-order pseudo-differential operator with
$m_0(\xi) \in C^{\infty}\left(\mathbb{R}^d \backslash\{0\}\right)$.
Then the following statements are true.
\begin{enumerate}[(1)]
\item For all $\epsilon \in(0,1)$ and $(p, r) \in[1, \infty]^2$, there exists a constant $C=C(d, \epsilon)>0$ such that
\begin{equation}\label{eq:prod-es}
  \|u \cdot \nabla \phi\|_{B_{p, r}^{-\epsilon}}
  \leq C \min \left\{\|u\|_{B_{p, r}^{-\epsilon}}\|\nabla \phi\|_{L^{\infty}},\|u\|_{L^{\infty}}\|\nabla \phi\|_{B_{p, r}^{-\epsilon}}\right\},
\end{equation}
and
\begin{equation}\label{eq:prodBes-endp}
  \|u \cdot \nabla \phi\|_{B_{p, r}^{-1}}
  \leq C \min \left\{\|u\|_{B_{p, r}^{-1}} \|\nabla \phi\|_{B^0_{\infty,1}}, 
  \|u\|_{B^0_{\infty,1}} \|\nabla \phi\|_{B_{p, r}^{-1}}\right\}.
\end{equation}
\item For all $s \in(-1,1)$, $\sigma>-1$, $-1<\sigma+ s <1$ and $(p, r) \in[1, \infty]^2$, there exists a constant $C=C(d, s, \sigma)>0$ so that
\begin{equation}\label{eq:prod-es5}
  \|[m(D), u \cdot \nabla] \phi\|_{B_{p, r}^s} \leq C\|u\|_{W^{1, \infty}}\|\phi\|_{B_{p, r}^{s + \sigma}}.
\end{equation}
\item For all $-d < \sigma <0$, there exists a constant $C = C(d,\sigma)>0$ such that
\begin{equation}\label{es:prod-es6}
  \left\|[ m(D), u \cdot \nabla] \phi\right\|_{L^\infty}
  \leq C \|\nabla u\|_{L^\infty}\|\phi\|_{L^1\cap L^\infty},
\end{equation}
and
\begin{equation}\label{es:prod-es7}
\begin{aligned}
  \|\partial_{\WW}([m(D),u\cdot\nabla]\phi)\|_{L^\infty} 
  & \leq C\bigl( \|\nabla\WW\|_{L^\infty}\|\nabla u\|_{L^\infty} 
  + \|\nabla(\partial_{\WW}u)\|_{L^\infty} \bigr) \|\phi\|_{L^1\cap L^\infty} \\
  & \quad + C\|\nabla u\|_{L^\infty}\|\partial_{\WW}\phi\|_{L^1\cap L^\infty}.
\end{aligned}
\end{equation}
\item For all $s \in(-1,1)$, $\sigma>-1$, $-1<\sigma+ s <1$ and $(p, r) \in[1, \infty]^2$, there exists a constant $C=C(d, s, \sigma)>0$ so that
\begin{equation}\label{eq:prod-es4}
  \left\|\partial_{\mathcal{W}}(m(D) \phi)\right\|_{B_{p,r}^s}
  \leq C\left\|\partial_{\mathcal{W}} \phi\right\|_{B_{p, r}^{s +\sigma}}+C\|\mathcal{W}\|_{W^{1, \infty}}\|\phi\|_{B_{p, r}^{s+\sigma}}.
\end{equation}
\end{enumerate}
\end{lemma}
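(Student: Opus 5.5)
Most items of Lemma~\ref{lem:str-es1} are standard paradifferential facts, so I would prove them by Bony's decomposition item by item, quoting known results where possible and detailing only the endpoint and $L^\infty$ commutator bounds.

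\emph{Item (1).} Since $\divg u=0$, write $u\cdot\nabla\phi=\divg(u\phi)$ and decompose $u\cdot\nabla\phi = T_u\nabla\phi + T_{\nabla\phi}u + \divg R(u,\phi)$. For \eqref{eq:prod-es} with negative regularity $-\epsilon$, the paraproducts $T_u\nabla\phi$ and $T_{\nabla\phi}u$ are controlled by $\|u\|_{L^\infty}\|\nabla\phi\|_{B^{-\epsilon}_{p,r}}$ and $\|\nabla\phi\|_{L^\infty}\|u\|_{B^{-\epsilon}_{p,r}}$, respectively. The cross terms $T_u\nabla\phi$ in the first bound and $T_{\nabla\phi} u$ in the second use $S_{q-1}$ acting on a negative-index Besov function, which produces a factor $2^{q\epsilon}$ with $\epsilon<1$; this is summable. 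The remainder is handled through the divergence form: write $R(u,\phi)=\sum_q \Delta_q u\,\widetilde\Delta_q\phi$ and use $\|\widetilde\Delta_q\phi\|_{L^\infty}\lesssim 2^{-q}\|\nabla\phi\|_{L^\infty}$ for $q\ge0$. The low frequency $q=-1$ is handled separately via $u\cdot\nabla\phi$ itself. The endpoint \eqref{eq:prodBes-endp} follows in the same way, but the loss $2^{q\epsilon}$ becomes logarithmic at $\epsilon=1$. Replacing $L^\infty$ by $B^0_{\infty,1}$ cures this, since $\|S_{q-1}f\|_{L^\infty}\le\|f\|_{B^0_{\infty,1}}$ and the sums close exactly.

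\emph{Items (2) and (4).} These are the classical commutator and striated lemmas (as in \cite{HKR10,HKR11,LLX24}). For \eqref{eq:prod-es5}, split $[m(D),u\cdot\nabla]\phi=\sum_q[m(D),S_{q-1}u\cdot\nabla]\Delta_q\phi + (\text{paraproduct and remainder terms})$. The main commutator is written as a convolution with $2^{q(d+\sigma)}h(2^q\cdot)$ and estimated by Taylor expansion, which gives $\|\nabla u\|_{L^\infty}2^{q\sigma}\|\Delta_q\phi\|_{L^p}$. The remaining terms use $s+\sigma\in(-1,1)$ and $\divg u=0$, together with $\sigma>-1$ to handle the low-frequency block. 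Item (4) then follows by writing $\partial_\WW(m(D)\phi)=m(D)\partial_\WW\phi-[m(D),\WW\cdot\nabla]\phi$ and applying \eqref{eq:prod-es5} with $u$ replaced by each divergence-free $W^i$.

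\emph{Item (3).} This is the main obstacle, because only $L^\infty$ control is available and Besov summation cannot be used. For $-d<\sigma<0$, the operator $m(D)$ has a kernel $K$ homogeneous of degree $-d-\sigma$, and $-d-\sigma\in(-d,0)$, so $K$ is locally integrable. The commutator can then be written as
\[
[m(D),u\cdot\nabla]\phi(x)=\int \nabla K(x-y)\cdot\bigl(u(x)-u(y)\bigr)\phi(y)\,\dd y,
\]
using $\divg u=0$ to move the derivative onto $K$. Since $|\nabla K(z)||z|\lesssim|z|^{-d-\sigma}$, the bound $|u(x)-u(y)|\le\|\nabla u\|_{L^\infty}|x-y|$ near the diagonal gives
\[
\|\nabla u\|_{L^\infty}\int|x-y|^{-d-\sigma}|\phi(y)|\,\dd y,
\]
and splitting the region into $|x-y|<1$ and $|x-y|\ge1$ controls this by $\|\phi\|_{L^1\cap L^\infty}$. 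I must check that large $|x-y|$ is not a problem, since $u$ is not bounded in difference at large distances; for $|x-y|\ge1$ I would use the kernel decay directly. The bound $|u(x)-u(y)|\le\|\nabla u\|_{L^\infty}|x-y|$ is still valid there, and the weight $|x-y|^{-d-\sigma}\le1$ makes the integral bounded by $\|\phi\|_{L^1}$. This proves \eqref{es:prod-es6}. For \eqref{es:prod-es7}, I apply $W(x)\cdot\nabla_x$ under the integral sign and add and subtract $W(y)\cdot\nabla_y$ to pass the derivative onto $\phi$ by integration by parts (using $\divg W=0$). This generates terms of three types: one involving $\nabla^2 K$ with the factor $(W(x)-W(y))(u(x)-u(y))$, one involving $(\partial_W u)(x)-(\partial_W u)(y)$, and one involving $\partial_W\phi$. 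Each kernel is again $O(|x-y|^{-d-\sigma})$ after using the Lipschitz bounds on $W$, $u$ and $\partial_W u$, so the same near/far splitting gives the stated right-hand side.
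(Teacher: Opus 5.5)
The genuine gap is in the endpoint estimate \eqref{eq:prodBes-endp}. Most of the rest of your plan is sound.

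\textbf{The parts that match the paper.} Item (3) follows the paper's own argument: the kernel representation of the commutator (yours differs from \eqref{eq:comm-u-phi} by a harmless sign), the bound $|u(x)-u(y)|\,|\nabla K(x-y)|\lesssim\|\nabla u\|_{L^\infty}|x-y|^{-d-\sigma}$ with the split $|x-y|\lessgtr 1$, and the same three-term striated formula for \eqref{es:prod-es7}. The paper simply quotes (2) and (4) from earlier work.

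Your reduction of (4) to (2) works, with one fix at low frequency. When $\sigma\le 0$, $m(D)\partial_{\WW}\phi$ is not controlled by $\|\partial_\WW\phi\|_{B^{s+\sigma}_{p,r}}$; for instance $\Lambda^{\sigma}\Delta_{-1}$ is unbounded on $L^p$ for $\sigma<0$. Write $\partial_\WW\phi=\mathrm{div}(\WW\phi)$. Then $\Delta_{-1}m(D)\,\mathrm{div}$ has an $L^1$ kernel, since its symbol has order $1+\sigma>0$ at the origin, and this block is charged to $\|\WW\|_{W^{1,\infty}}\|\phi\|_{B^{s+\sigma}_{p,r}}$. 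This is where $\sigma>-1$ is needed.

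\textbf{The gap: the endpoint estimate.} You place the loss at $\epsilon=1$ in the paraproducts and cure it with $\|S_{q-1}f\|_{L^\infty}\le\|f\|_{B^0_{\infty,1}}$. That inequality is weaker than the trivial $\|S_{q-1}f\|_{L^\infty}\le C\|f\|_{L^\infty}$, so it cures nothing. In fact the paraproducts lose nothing at $\epsilon=1$: $2^{-q}\|S_{q-1}u\|_{L^p}\le\sum_{k\le q-2}2^{k-q}\,2^{-k}\|\Delta_k u\|_{L^p}$ is still a convolution with a summable kernel.

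The loss is in the remainder. One has $2^{-q\epsilon}\|\Delta_q\,\mathrm{div}\,R(u,\phi)\|_{L^p}\lesssim\sum_{j\ge q-3}2^{(q-j)(1-\epsilon)}\|\Delta_j u\|_{L^\infty}\,2^{j(1-\epsilon)}\|\widetilde\Delta_j\phi\|_{L^p}$, and at $\epsilon=1$ the factor $2^{(q-j)(1-\epsilon)}$ no longer decays. The $B^0_{\infty,1}$ hypothesis repairs this only by $\ell^1$-summation in $j$. That gives $\sup_q\sum_{j\ge q-3}\|\Delta_ju\|_{L^\infty}\|\widetilde\Delta_j\phi\|_{L^p}\le C\|u\|_{B^0_{\infty,1}}\|\nabla\phi\|_{B^{-1}_{p,\infty}}$, and symmetrically with $\|u\|_{B^{-1}_{p,\infty}}\|\nabla\phi\|_{B^0_{\infty,1}}$. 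This proves the case $r=\infty$ only.

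For $r<\infty$ your claim that the sums close exactly fails, and the inequality itself is false. Take $u=\cos(2^Jx_1)e_2$ (divergence-free), $\Psi=S_{J-3}\,\mathrm{sgn}(x_2)$ and $\phi=\cos(2^Jx_1)\Psi$.
\begin{itemize}
\item Then $u\cdot\nabla\phi=\tfrac12\bigl(1+\cos(2^{J+1}x_1)\bigr)\Psi'$.
\item For $0\le q\le J-5$, $2^{-q}\|\Delta_q(u\cdot\nabla\phi)\|_{L^\infty}$ equals a fixed positive constant, so $\|u\cdot\nabla\phi\|_{B^{-1}_{\infty,r}}\gtrsim J^{1/r}$.
\item But $u$ and $\nabla\phi$ are frequency-localized at $|\xi|\approx 2^J$, with $\|u\|_{L^\infty}\le 1$ and $\|\nabla\phi\|_{L^\infty}\lesssim 2^J$.
\item Hence both entries of the minimum in \eqref{eq:prodBes-endp} are $O(1)$.
\end{itemize}

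So \eqref{eq:prodBes-endp} should be stated and proved for $r=\infty$ only. That is the only case the paper uses, namely the $B^{-1}_{\infty,\infty}$ bounds for $F^{(1)}$ in the proof of Proposition~\ref{prop:C2gam}.
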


\begin{proof}[Proof of Lemma \ref{lem:str-es1}]
One can see \cite[Lemma 2.5]{CMX22} for the proof of \eqref{eq:prod-es} and 
\cite[Lemma 2.3]{LXY25} for the proof of \eqref{eq:prod-es5} and \eqref{eq:prod-es4}.
The proof of \eqref{eq:prodBes-endp} 
can be done using Bony's decomposition and arguing similarly as that of \eqref{eq:prod-es}, 
and we here omit the details (see e.g. \cite{LXY26} for the proof).

For the proof of \eqref{es:prod-es6}, noting that
\begin{align*}
  m(D) \phi(x) =  K_\sigma\ast \phi(x)=\int_{\mathbb{R}^d} K_\sigma(x-y) \phi(y) \dd y, 
  \quad \textrm{with}\;\; K_\sigma(x)=\frac{\Omega(x/|x|)}{|x|^{d+\sigma}},
\end{align*}
and $\Omega \in C^\infty(\mathbb{S}^{d-1})$,
we indeed have (see \cite{LXY26} for the detailed proof)
\begin{equation}\label{eq:comm-u-phi}
  [m(D), u \cdot \nabla] \phi 
  = \int_{\R^d} (u(y)-u(x))\cdot(\nabla K_\sigma)(x-y)\phi(y)\,\dd y,
\end{equation}
%and
%\begin{align*}%\label{eq:pWcomm-u-phi}
%  \partial_{\mathcal{W}}\big([m(D), u \cdot \nabla] \phi\big)
%  = &\, \int_{\R^d} (u(y)-u(x))\cdot (\nabla^2 K_\sigma)(x-y)\cdot (\mathcal{W}(x) - \mathcal{W}(y))\phi(y)\,\dd y\\
%  & + \int_{\R^d} (\partial_{\mathcal{W}}u(y) - \partial_{\mathcal{W}}u(x))\cdot (\nabla K_\sigma)(x-y)\phi(y)\,\dd y\\
%  & + \int_{\R^d} (u(y) - u(x))\cdot (\nabla K_\sigma)(x-y)\partial_{\mathcal{W}}\phi(y)\,\dd y.
%\end{align*}
which immediately leads to the desired estimates that for $-d <\sigma <0$,
\begin{align*}
  \|[m(D),u\cdot\nabla]\phi\|_{L^\infty} \leq C \|\nabla u\|_{L^\infty} 
  \Big\|\int_{\mathbb{R}^d} \frac{1}{|x-y|^{d+\sigma}} \phi(y) \dd y\Big\|_{L^\infty_x}
  \leq C \|\nabla u\|_{L^\infty} \|\phi\|_{L^1\cap L^\infty}.
\end{align*}
%and
%\begin{align*}
%  &\left\|\partial_{\mathcal{W}}[m(D), u \cdot \nabla] \phi\right\|_{L^\infty}\\
%  &\leq C \big(\|\nabla\mathcal{W}\|_{L^\infty}\|\nabla u\|_{L^\infty} 
%  + \left\|\nabla \partial_{\mathcal{W}} u\right\|_{L^\infty} \big) 
%  \left\|\int_{\R^d}\frac{|\phi(y)|}{|x-y|^{d+\sigma}}\dd y\right\|_{L^\infty_x}
%  +C \|\nabla u\|_{L^\infty}\left\|\int_{\R^d} 
%  \frac{|\partial_{\mathcal{W}}\phi(y)|}{|x-y|^{d+\sigma}} \dd y\right\|_{L^\infty_x}\\
%  &\leq C \left(\|\nabla\mathcal{W}\|_{L^\infty}\|\nabla u\|_{L^\infty}
%  + \left\|\nabla \partial_{\mathcal{W}} u\right\|_{L^\infty} \right)\|\phi\|_{L^1\cap L^\infty} 
%  + C \|\nabla u\|_{L^\infty}\|\partial_{\mathcal{W}}\phi\|_{L^1\cap L^\infty}.
%\end{align*}

The proof of \eqref{es:prod-es7} is in a similar manner as that of \eqref{es:prod-es6}: 
by deriving a striated version of the expression formula \eqref{eq:comm-u-phi}
for $\partial_{\WW}([m(D),u\cdot\nabla]\phi)$, 
the wanted estimate \eqref{es:prod-es7} directly follows. 
One can see \cite{LXY26} for more details.
\end{proof}

We have the following useful commutator estimates involving the operator $\mathcal{R}_{-1}$. %One can refer to \cite[Lemma 2.6]{LXY25} for the proof.
\begin{lemma}\label{lem:Rbeta-cm}
Let $d\geq 2$.
Let $(p, r) \in[2, \infty] \times[1, \infty]$, $\mathcal{R}_{-1} := \Lambda^{-1} m(D)$, 
and $m(D)$ be a zero-order pseudo-differential operator with its Fourier symbol 
$m(\xi)\in C^\infty(\mathbb{R}^d\setminus \{0\})$.
Assume that $u=(u_1,\cdots,u_d)$ is a smooth divergence-free vector field on $\mathbb{R}^d$
and $\phi$ is a smooth scalar function in $\mathbb{R}^d$.
%If $\beta\in (1,2)$, we have that for all $s \in (\beta-2, 1)$,
%\begin{align}\label{eq:Rbeta-cm-es}
%  \left\|\left[\mathcal{R}_{1-\beta}, u \cdot \nabla\right] \phi\right\|_{B_{p, r}^s}
%  \leq C_{s, \beta,d}\,\|\nabla u\|_{L^p}\left(\|\phi\|_{B_{\infty, r}^{s+1-\beta}}+\|\phi\|_{L^2}\right) ,
%\end{align}
We have that for all $s \in (0,1)$ and $q\in (1,\infty)$,
\begin{equation}\label{eq:R-1cm-es1}
  \|[\mathcal{R}_{-1}, u\cdot\nabla] \phi\|_{B^s_{p,r}}
  \leq C_{s,q,d} \,\Big( \|\nabla u\|_{L^p} \big( \|\phi\|_{B^{s - 1}_{\infty,r}} + \|\phi\|_{L^2} \big)
  + \|u\|_{L^q} \|\phi\|_{L^{\frac{q}{q-1}}} \Big).
\end{equation}
%Besides, if $p=\infty$, we also have
%\begin{align*}
%  \left\|\left[\mathcal{R}_\beta, u \cdot \nabla\right] \phi\right\|_{B_{\infty, r}^s}
%  \le C _{s, \beta}\left(\|\omega\|_{L^{\infty}}
%  + \|u\|_{L^2}\right)\|\phi\|_{B_{\infty, r}^{s+\frac{(1-\beta)}{2} }}+\|u\|_{L^2}\|\phi\|_{L^2}.
%\end{align*}
\end{lemma}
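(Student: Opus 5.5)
We write the commutator in kernel form and split it into high- and low-frequency pieces via Bony's decomposition. Since $\mathcal{R}_{-1}=\Lambda^{-1}m(D)$ is a Fourier multiplier of order $-1$ and $\divg u=0$, we have $u\cdot\nabla\phi=\divg(u\phi)$, so
\begin{align*}
  [\mathcal{R}_{-1},u\cdot\nabla]\phi
  = \mathcal{R}_{-1}\divg(u\phi) - u\cdot\nabla\mathcal{R}_{-1}\phi .
\end{align*}
Using Bony's decomposition $u\phi = T_u\phi + T_\phi u + R(u,\phi)$ (and similarly for $u\cdot\nabla\mathcal{R}_{-1}\phi$), we will split the commutator into three groups:
\begin{itemize}
\item[(i)] the paraproduct commutator $\sum_q [\mathcal{R}_{-1}\partial_j, S_{q-1}u_j]\Delta_q\phi$;
\item[(ii)] the terms $\mathcal{R}_{-1}\divg(T_\phi u) - T_{\nabla\mathcal{R}_{-1}\phi}\cdot u$;
\item[(iii)] the remainders $\mathcal{R}_{-1}\divg R(u,\phi) - R(u_j,\partial_j\mathcal{R}_{-1}\phi)$.
\end{itemize}
The low frequency block $\Delta_{-1}$ of the output is treated separately, because $\Lambda^{-1}$ is singular at the origin.

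\textbf{High frequencies, group (i).} For $q\ge 0$, the operator $\mathcal{R}_{-1}\partial_j\widetilde\Delta_q$ has order zero with kernel $h_q$ satisfying $\||x|h_q\|_{L^1}\lesssim 2^{-q}$. The standard first-order Taylor argument gives
\begin{align*}
  \|[\mathcal{R}_{-1}\partial_j\widetilde\Delta_q,S_{q-1}u_j]\Delta_q\phi\|_{L^p}
  \lesssim 2^{-q}\|\nabla S_{q-1}u\|_{L^p}\|\Delta_q\phi\|_{L^\infty}.
\end{align*}
Multiplying by $2^{qs}$ yields $\|\nabla u\|_{L^p}\,2^{q(s-1)}\|\Delta_q\phi\|_{L^\infty}$, and summing in $\ell^r$ produces $\|\nabla u\|_{L^p}\|\phi\|_{B^{s-1}_{\infty,r}}$. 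Here $p\ge2$ is harmless; the block $\Delta_{-1}\phi$ is controlled by $\|\phi\|_{L^2}$.

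\textbf{High frequencies, groups (ii) and (iii).} In group (ii), $T_\phi u$ places the low frequencies on $\phi$, so $S_{q-1}\phi\,\Delta_q u$. We write $\Delta_q u = 2^{-q}\tilde\Delta_q\nabla u$ (for $q\ge0$) and use $\|S_{q-1}\phi\|_{L^\infty}\lesssim \sum_{q'\le q}2^{q'(1-s)}\,2^{q'(s-1)}\|\Delta_{q'}\phi\|_{L^\infty}$, bounding the lowest block by $\|\phi\|_{L^2}$. Since $1-s>0$, the resulting discrete convolution gives $2^{qs}\|\cdot\|_{L^p}\lesssim \|\nabla u\|_{L^p}\, c_q\|\phi\|_{B^{s-1}_{\infty,r}}$. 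Both pieces of (ii) are handled this way, since each $\mathcal{R}_{-1}\divg$ and each $\nabla\mathcal{R}_{-1}$ is of order $0$ at high frequency. For the remainder group (iii), the output frequencies are $\lesssim 2^{q'}$ with $q'\ge q-3$, and we estimate $2^{qs}\sum_{q'\ge q-3}2^{-q'}\|\Delta_{q'}\nabla u\|_{L^p}\|\widetilde\Delta_{q'}\phi\|_{L^\infty}$; this sum converges precisely because $s>0$ (here $s\in(0,1)$ is used).

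\textbf{Low frequency block.} This is the main obstacle. Applying $\Delta_{-1}$ to the commutator, the term $u\cdot\nabla\mathcal{R}_{-1}\phi$ and the symbol $\Lambda^{-1}$ near $0$ prevent a clean bound by $\nabla u$. Instead we keep the divergence form $\Delta_{-1}\mathcal{R}_{-1}\divg(u\phi)$: the operator $\Delta_{-1}\Lambda^{-1}m\,\divg$ has a kernel that is $O(|x|^{-d})$ at infinity and bounded near the origin, hence lies in $L^{r'}$ for all $r'>1$. Choosing Young exponents so the output lands in $L^p$ with $p\ge2$, we obtain
\begin{align*}
  \|\Delta_{-1}\mathcal{R}_{-1}\divg(u\phi)\|_{L^p}\lesssim \|u\phi\|_{L^1}\le \|u\|_{L^q}\|\phi\|_{L^{q'}},
\end{align*}
up to the $L^{1}\to L^p$ Young step, which is valid since the kernel is in $L^p$ for $p\ge2>1$. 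For $\Delta_{-1}(u\cdot\nabla\mathcal{R}_{-1}\phi)$, the product structure is rewritten as $\Delta_{-1}\divg(u\,\mathcal{R}_{-1}\phi)$. The factor $\mathcal{R}_{-1}\phi$ is not in $L^{q'}$ in general, so we split it via Bony once more: pieces with high frequencies on $\phi$ give $\|\nabla u\|_{L^p}\|\phi\|_{B^{s-1}_{\infty,r}}$ as above, while the purely low-low piece $\Delta_{-1}(S_0u\cdot\nabla\mathcal{R}_{-1}\Delta_{-1}\phi)$ is paired against the same piece in the first term. Their difference is a genuine commutator of $\Delta_{-1}\mathcal{R}_{-1}\partial_j$ with $S_0 u_j$, and its kernel estimate yields $\|\nabla u\|_{L^p}\|\phi\|_{L^2}$ by Hardy–Littlewood–Sobolev-type bounds. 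If that pairing fails to close, we fall back on estimating the low-frequency part directly by $\|u\|_{L^q}\|\phi\|_{L^{q'}}$ through the divergence form. Collecting the high- and low-frequency bounds gives \eqref{eq:R-1cm-es1}.
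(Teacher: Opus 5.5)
There is a genuine gap. Your splitting into (i), (ii), (iii) is the paper's $\mathrm{I}$, $\mathrm{II}$, $\mathrm{III}$, and your high-frequency bounds for (i) and (ii) are fine. The gap is in the low frequencies, which is exactly where the term $\|u\|_{L^q}\|\phi\|_{L^{q/(q-1)}}$ must be produced. Your argument for $\Delta_{-1}(u\cdot\nabla\mathcal{R}_{-1}\phi)$ does not close, for three reasons.

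\textbf{The pairing step.} It rests on $|[\mathcal{R}_{-1}\partial_j,a]f(x)|\lesssim\|\nabla a\|_{L^\infty}\int|x-y|^{1-d}|f(y)|\,\dd y$. But $\|\phi\|_{L^2}$ only places $\Delta_{-1}\phi$ in $L^a$ for $a\ge 2$. Hardy--Littlewood--Sobolev then gives nothing for $d=2$, and for $d=3$ only $L^b$ with $b\ge 6$, not the full range $p\ge 2$.

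\textbf{The ``as above'' claim.} You have already spent the whole first term on $\|u\phi\|_{L^1}$. So the paraproduct pieces $\Delta_{-1}(S_{k-1}u\cdot\nabla\mathcal{R}_{-1}\Delta_k\phi)$ with small $k$ (their output does meet the support of $\chi$) are no longer commutators. Moreover $S_{k-1}u$ contains the block $\Delta_{-1}u$, which $\|\nabla u\|_{L^p}$ does not control. So ``as above'' does not apply to them.

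\textbf{The fallback.} Going through the divergence form needs $\mathcal{R}_{-1}\phi\in L^{q/(q-1)}$, which you yourself note is false.

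The same problem appears in (iii). There you write $2^{-q'}\|\Delta_{q'}\nabla u\|_{L^p}$ in place of $\|\Delta_{q'}u\|_{L^p}$ also for $q'=-1$. These low-$u$-frequency remainder terms feed the output blocks $\Delta_0,\Delta_1,\Delta_2$, not only $\Delta_{-1}$.

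The missing idea is not to move the derivative outside. The operator $\nabla\mathcal{R}_{-1}=\nabla\Lambda^{-1}m(D)$ is a zero-order Calder\'on--Zygmund operator, hence bounded on $L^{q/(q-1)}$ precisely because $1<q<\infty$. Your write-up never uses this restriction on $q$, which is a symptom of the missing step. The paper organizes the low part by the frequency of $u$ in the remainder: using $\divg u=0$ it isolates
\[
\mathrm{III}_3=\sum_{-1\le k\le 2}[\mathcal{R}_{-1}\nabla\cdot,\Delta_k u]\,\widetilde\Delta_k\phi ,
\]
whose output lies in the blocks $-1\le j\le 6$. One then passes from $L^1$ to $L^p$: by Bernstein, and for $j=-1$ by Young with the $L^p$ kernel, exactly as you did for the first term. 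Finally,
\[
\|\Delta_k u\,\widetilde\Delta_k\phi\|_{L^1}+\|\Delta_k u\cdot\nabla\mathcal{R}_{-1}\widetilde\Delta_k\phi\|_{L^1}\lesssim \|u\|_{L^q}\,\|\phi\|_{L^{q/(q-1)}}
\]
by H\"older and the Calder\'on--Zygmund bound. All remaining terms ($\mathrm{I}$, $\mathrm{II}$, and the remainder with $k\ge 3$) involve $u$ only through $\nabla S_{k-1}u$ or through blocks $\Delta_k u$ with $k\ge 3$. They are bounded by $\|\nabla u\|_{L^p}\big(\|\phi\|_{B^{s-1}_{\infty,r}}+\|\phi\|_{L^2}\big)$; the paper imports that part from earlier work.
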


\begin{proof}[Proof of Lemma \ref{lem:Rbeta-cm}]
%We can refer to \cite[Lemma 2.6]{LXY25} for the proof of \eqref{eq:Rbeta-cm-es} (although in \cite{LXY25} it treated the special case 
%$\mathcal{R}_{1-\beta} = \partial_1 \Lambda^\beta $, the proof directly extends to this general case). 
The proof of \eqref{eq:R-1cm-es1} follows a similar approach as \cite[Lemma 0.1]{CMX22E} with small modification. 
Taking advantages of Bony's decomposition, we write
\begin{align*}
  {\left[\mathcal{R}_{-1}, u \cdot \nabla\right] \phi} 
  & =\sum_{q \in \mathbb{N}}\left[\mathcal{R}_{-1}, S_{q-1} u \cdot \nabla\right] \Delta_q \phi
  + \sum_{q \in \mathbb{N}}\left[\mathcal{R}_{-1}, \Delta_q u \cdot \nabla\right] S_{q-1} \phi 
  + \sum_{q \geq-1}\left[\mathcal{R}_{-1}, \Delta_q u \cdot \nabla\right] \widetilde{\Delta}_q \phi \\
  & =: \mathrm{I}+\mathrm{II}+\mathrm{III}.
\end{align*}
Since $\text{div}~u=0$, we also have
\begin{align*}
  \mathrm{III} & =\sum_{q \geq 3} \mathcal{R}_{-1} \nabla \cdot \big(\Delta_q u \widetilde{\Delta}_q \phi\big)
  -\sum_{q \geq 3} \Delta_q u \cdot \nabla \mathcal{R}_{-1} \widetilde{\Delta}_q \phi 
  +\sum_{-1 \leq q \leq 2}\left[\mathcal{R}_{-1} \nabla \cdot, \Delta_q u\right] \widetilde{\Delta}_q \phi \\
  & =: \mathrm{III}_1+\mathrm{III}_2+\mathrm{III}_3 .
\end{align*}
The estimates for $\mathrm{I}$, $\mathrm{II}$, $\mathrm{III}_1$ and $\mathrm{III}_2$ 
are the same as those in \cite[Lemma 0.1]{CMX22E}, which give that
\begin{align*}
  \left\|\mathrm{I}\right\|_{B_{p, r}^s}+\left\|\mathrm{II}\right\|_{B_{p, r}^s} 
  +\left\|\mathrm{III}_1\right\|_{B_{p, r}^s}+\left\|\mathrm{III}_2\right\|_{B_{p, r}^s} 
  \leq C_s\|\nabla u\|_{L^p} \big(\|\phi\|_{B_{\infty,r}^{s-1}}+\|\phi\|_{L^2}\big).
\end{align*}
As for the remaining term $\mathrm{III}_3$, using the Calder\'on-Zygmund theorem 
for the singular integral operator and H\"older's inequality, we derive that
\begin{align*}
  \left\|\mathrm{III}_3\right\|_{B_{p, r}^s} 
  & \leq C_s \sum_{-1 \leq j \leq 6} \sum_{-1 \leq q \leq 2} 
  \left(\big\|\Delta_j \mathcal{R}_{-1} \nabla \cdot
  \big(\Delta_q u \,\widetilde{\Delta}_q \phi\big) \big\|_{L^p}
  +\big\|\Delta_q u \cdot \nabla \mathcal{R}_{-1} \widetilde{\Delta}_q \phi\big\|_{L^p}\right) \\
  & \leq C_s \sum_{-1 \leq q \leq 2}\left(\big\|\Delta_q u \, \widetilde{\Delta}_q \phi\big\|_{L^1}
  +\big\|\Delta_q u \cdot \nabla \mathcal{R}_{-1} \widetilde{\Delta}_q \phi\big\|_{L^1}\right) \\
  & \leq C_{s,q} \sum_{-1 \leq q \leq 2}\big\|\Delta_q u\big\|_{L^q} 
  \big\|\widetilde{\Delta}_q \phi\big\|_{L^{\frac{q}{q-1}}} \leq C_{s,q}\,\|u\|_{L^q}\|\phi\|_{L^{\frac{q}{q-1}}}.
\end{align*}
Collecting the above estimates leads to \eqref{eq:R-1cm-es1}, as desired.
\end{proof}

We have the following smoothing estimates for the transport and transport-diffusion equations.
\begin{lemma}\label{lem:TD-sm2}
  Assume $(\rho, r,p)\in [1,\infty]^3,\,\alpha\in (0,1]$ and $-1< s < 1$. 
Let $u$ be a smooth divergence-free vector field and $\phi$ be a smooth function solving the following equation
\begin{equation*}%\label{eq:TD-eq2}
  \partial_t \phi + u\cdot\nabla \phi + \nu \Lambda^{2\alpha} \phi = f,\quad \phi|_{t=0}(x)=\phi_0(x),\quad x\in \R^d.
\end{equation*}
The following statements hold.
\begin{enumerate}[(1)]
\item
If $\nu=0$, then there exists a constant $C= C(d,s)>0$ such that for any $t>0$, 
\begin{equation}\label{eq:T-sm2}
  \|\phi\|_{L_t^{\infty}\left(B_{p, r}^s\right)} \leq C\left(\left\|\phi_0\right\|_{B_{p, r}^s}
  +\|f\|_{\widetilde{L}_t^1\left(B_{p, r}^s\right)} 
  +\int_0^t\|\nabla u(\tau)\|_{L^{\infty}}\|\phi(\tau)\|_{B_{p, r}^s} \mathrm{~d} \tau\right),
\end{equation}
and
\begin{equation}\label{eq:T-sm3}
  \|\phi\|_{L^\infty_t (B^{s}_{p,r})} \leq C e^{C \int_0^t \|\nabla u(\tau)\|_{L^\infty} \dd \tau} 
  \Big( \|\phi_0\|_{B^{s}_{p,r}} + \|f\|_{\widetilde{L}^1_t (B^{s}_{p,r})} \Big).
\end{equation}
%\item 
%If $\nu>0$, then there exists a constant $C = C(d,s)>0$ such that for any $t>0$,
%\begin{equation}\label{es.sm1}
%  \nu^{\frac{1}{\rho}}  \|\phi\|_{\widetilde{L}^\rho_t (B^{s+ \frac{2\alpha}{\rho}}_{p,r})} 
%  \leq C (1+\nu t)^{\frac{1}{\rho}}
%  e^{C\int_0^t\|\nabla u(\tau)\|_{L^\infty}\dd\tau}\big( \|\phi_0\|_{B^{s}_{p,r}} 
%  + \|f\|_{L^1_t (B^{s}_{p,r})} \big).
%\end{equation}
\item 
If $\nu>0$, then there exists a constant $C=C(d,s,\alpha)>0$ such that for any $t>0$,
\begin{equation}\label{TD-sm-es}
  \nu^{\frac{1}{\rho}} \Big\|\Big(2^{q(s+\frac{2\alpha}{\rho})} 
  \|\Delta_q\phi\|_{L_t^\rho(L^p)}\Big)_{q\in \mathbb{N}}\Big\|_{\ell^r}
  \leq C e^{C \int_0^t\|\nabla u(\tau)\|_{L^{\infty}} \dd \tau}
  \left(\|\phi_0\|_{B^s_{p,r}}+\|f\|_{L_t^1(B^s_{p,r})}\right).
\end{equation}
In particular, if $s= -1$, $r=\infty$, there exists $C=C(d,\alpha)>0$ independent of $\nu$ such that for all $t>0$, 
\begin{align}\label{eq:TD-sm5}
  \nu^{\frac{1}{\rho}} \sup_{q\in\mathbb{N}} 2^{q(-1+\frac{2\alpha}{\rho})}\|\Delta_q\phi\|_{L_t^\rho(L^p)}
  \leq C e^{C \int_0^t\|u(\tau)\|_{B^1_{\infty,1}} \dd \tau}
  \left(\|\phi_0\|_{B^{-1}_{p,\infty}} + \|f\|_{L_t^1(B^{-1}_{p,\infty})} \right).
\end{align}
\end{enumerate}
\end{lemma}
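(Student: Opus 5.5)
The plan is the standard Littlewood--Paley energy method for transport(-diffusion) equations: localize in frequency, write a commutator, estimate the localized pieces, and then sum. We apply $\Delta_q$ to the equation and obtain
\begin{equation*}
\partial_t \Delta_q\phi + u\cdot\nabla\Delta_q\phi + \nu\Lambda^{2\alpha}\Delta_q\phi = \Delta_q f + [u\cdot\nabla,\Delta_q]\phi .
\end{equation*}
Since $\divg u=0$, multiplying by $|\Delta_q\phi|^{p-2}\Delta_q\phi$ and integrating kills the transport term.

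For part (1) ($\nu=0$) this gives
\begin{equation*}
\|\Delta_q\phi(t)\|_{L^p}\le\|\Delta_q\phi_0\|_{L^p}+\int_0^t\big(\|\Delta_q f\|_{L^p}+\|[u\cdot\nabla,\Delta_q]\phi\|_{L^p}\big)\,\dd\tau .
\end{equation*}
For $-1<s<1$ we use the classical commutator bound (Bahouri--Chemin--Danchin)
\begin{equation*}
\big\|\big(2^{qs}\|[u\cdot\nabla,\Delta_q]\phi\|_{L^p}\big)_q\big\|_{\ell^r}\le C\|\nabla u\|_{L^\infty}\|\phi\|_{B^s_{p,r}}.
\end{equation*}
It follows from Bony's decomposition: the paraproduct $T_u$ piece is handled by the mean value theorem; the $T_{\nabla\phi}u$ piece needs $s<1$; the remainder needs $s>-1$ together with $\divg u=0$. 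Multiplying by $2^{qs}$ and taking the $\ell^r$ norm of the time integral gives \eqref{eq:T-sm2}; the $f$-term comes in the $\widetilde L^1_t$ form because the $\ell^r$ norm is taken after integrating in time. Gr\"onwall's inequality then yields \eqref{eq:T-sm3}.

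For part (2) ($\nu>0$), we restrict to $q\in\mathbb N$, where the generalized Bernstein inequality gives
\begin{equation*}
\int|\Delta_q\phi|^{p-2}\Delta_q\phi\,\Lambda^{2\alpha}\Delta_q\phi\ge c\,2^{2\alpha q}\|\Delta_q\phi\|_{L^p}^p ,
\end{equation*}
with $c>0$ independent of $q$ (Danchin, Chen--Miao--Zhang for $\alpha\le1$). We then have
\begin{equation*}
\tfrac{\dd}{\dd t}\|\Delta_q\phi\|_{L^p}+c\nu2^{2\alpha q}\|\Delta_q\phi\|_{L^p}\le\|\Delta_q f\|_{L^p}+\|[u\cdot\nabla,\Delta_q]\phi\|_{L^p},
\end{equation*}
and Duhamel's formula gives $\|\Delta_q\phi(t)\|_{L^p}\le e^{-c\nu 2^{2\alpha q}t}\|\Delta_q\phi_0\|_{L^p}+\int_0^t e^{-c\nu2^{2\alpha q}(t-\tau)}(\cdots)\,\dd\tau$. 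Young's inequality in time, using $\|e^{-c\nu2^{2\alpha q}\cdot}\|_{L^\rho}\lesssim(\nu2^{2\alpha q})^{-1/\rho}$, gives the gain $\nu^{1/\rho}2^{2\alpha q/\rho}$. For the commutator term we bound it by $\int_0^t\|\nabla u\|_{L^\infty}\|\phi\|_{B^s_{p,r}}\,\dd\tau$ and control $\|\phi\|_{L^\infty_t(B^s_{p,r})}$ by \eqref{eq:T-sm3} (or directly by Gr\"onwall, since the dissipation has a sign). Multiplying by $2^{qs}$ and taking $\ell^r$ over $q\in\mathbb N$ yields \eqref{TD-sm-es}. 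Restricting to $q\ge0$ is exactly what avoids a $(1+\nu t)$ factor from low frequencies.

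The main obstacle is the endpoint \eqref{eq:TD-sm5} with $s=-1$, where the commutator bound above fails because the remainder term needs $s>-1$. Here we write $[u\cdot\nabla,\Delta_q]\phi=\divg([u,\Delta_q]\phi)$ using $\divg u=0$ and redo Bony's decomposition at regularity $-1$. The paraproduct parts are still bounded by $\|\nabla u\|_{L^\infty}\|\phi\|_{B^{-1}_{p,\infty}}$. For the remainder, $\sum_{j\ge q-3}\Delta_q\divg(\Delta_j u\,\widetilde\Delta_j\phi)$, we sum $2^{q}\,2^{-j}\|\Delta_j\nabla u\|_{L^\infty}\,2^{j}\|\widetilde\Delta_j\phi\|_{L^p}\cdot 2^{-j}$ over $j$; this needs the $\ell^1$ summability of $\|\Delta_j\nabla u\|_{L^\infty}$, i.e.\ $\|u\|_{B^1_{\infty,1}}$. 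Thus
\begin{equation*}
\sup_q2^{-q}\|[u\cdot\nabla,\Delta_q]\phi\|_{L^p}\le C\|u\|_{B^1_{\infty,1}}\|\phi\|_{B^{-1}_{p,\infty}},
\end{equation*}
which matches the factor in the exponent of \eqref{eq:TD-sm5}. The rest of the argument is as in part (2), with Gr\"onwall applied to $\|\phi\|_{L^\infty_t(B^{-1}_{p,\infty})}$, and all constants are independent of $\nu$.
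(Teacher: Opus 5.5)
Your strategy is the paper's. The paper refers \eqref{eq:T-sm2}--\eqref{TD-sm-es} to a Littlewood--Paley energy argument built on the commutator bound of Bahouri--Chemin--Danchin (Lemma 2.100). It then gets \eqref{eq:TD-sm5} by substituting exactly the endpoint estimate \eqref{eq:comm-es-end} that you derive. Part (1), the restriction to $q\in\mathbb{N}$, and your Bony analysis at $s=-1$ (only the remainder needs $\sum_j\|\Delta_j\nabla u\|_{L^\infty}$) are fine.

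The gap is in the dissipative step of part (2). The lower bound $\int|\Delta_q\phi|^{p-2}\Delta_q\phi\,\Lambda^{2\alpha}\Delta_q\phi\,\dd x\ge c\,2^{2\alpha q}\|\Delta_q\phi\|_{L^p}^p$ is only established for $1<p<\infty$, with $c=c(p)$. The lemma, however, is stated for $p\in[1,\infty]$ with $C=C(d,s,\alpha)$. Moreover, $p=\infty$, $\alpha=1$ is exactly the case used for $F^{(1)}$ and $F^{(2)}$ in the proof of Proposition~\ref{prop:C2gam}.

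You cannot reach that case by letting $p\to\infty$, because for $\alpha=1$ the $L^\infty$ version of the inequality is false. The trigonometric polynomial
\begin{equation*}
g(x)=\tfrac12\cos x+\tfrac1{10}\cos 2x-\tfrac1{10}\cos 3x=\tfrac12-(1-\cos x)^2+\tfrac25(1-\cos x)^3
\end{equation*}
has frequencies $\pm1,\pm2,\pm3$ only, and satisfies $\|g\|_{L^\infty}=g(0)=\tfrac12$ and $g''(0)=0$.
\begin{itemize}
\item Rescale these frequencies into the interior of the annulus and multiply by $\prod_{i=1}^d\chi(\epsilon x_i)$, where $0\le\chi\le\chi(0)=1$ and $\widehat\chi$ is compactly supported.
\item This gives $h$ with spectrum in the annulus, $\|h\|_{L^\infty}=h(0)$, and $-\Delta h(0)=O(\epsilon^2)\,h(0)$.
\item Hence $\|e^{t\Delta}h\|_{L^\infty}\ge h(0)\bigl(1-O(\epsilon^2)\,t\bigr)-O(t^2)\,\|h\|_{L^\infty}$.
\end{itemize}
This contradicts $\|e^{t\Delta}h\|_{L^\infty}\le e^{-ct}\|h\|_{L^\infty}$ for any fixed $c>0$. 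That bound is what an $L^\infty$ analogue of your positivity inequality would give, applied to $e^{t\Delta}h$, whose spectrum stays in the annulus. In particular, $c(p)$ cannot stay bounded away from $0$ as $p\to\infty$.

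What does hold for every $p\in[1,\infty]$ is the semigroup bound $\|e^{-\nu t\Lambda^{2\alpha}}\Delta_q g\|_{L^p}\le Ce^{-c\nu t2^{2\alpha q}}\|\Delta_q g\|_{L^p}$. It comes with $C>1$, however, so it cannot be plugged into an energy inequality from which the transport term has simply been dropped. To cover $p\in\{1,\infty\}$ you need the Lagrangian argument of Hmidi--Keraani (see also Bahouri--Chemin--Danchin, Chapter~3):
\begin{itemize}
\item Split $[0,t]$ into $O\bigl(1+\int_0^t\|\nabla u\|_{L^\infty}\dd\tau\bigr)$ subintervals on which $\int\|\nabla u\|_{L^\infty}$ is small.
\item On each subinterval, compose $\Delta_q\phi$ with the flow $\psi$ of $S_{q-N}u$, so that the transport term disappears.
\item Apply the semigroup bound.
\item Absorb the errors created by the change of variables (its interaction with the dissipation and with the frequency localization) using the smallness of $\nabla\psi-\mathrm{Id}$.
\end{itemize}
The commutator terms $[u\cdot\nabla,\Delta_q]\phi$ and $(u-S_{q-N}u)\cdot\nabla\Delta_q\phi$ are then handled as you propose, with \eqref{eq:comm-es-end} at $s=-1$. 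As it stands, your proof gives \eqref{TD-sm-es}--\eqref{eq:TD-sm5} only for $1<p<\infty$, and with $p$-dependent constants.
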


\begin{proof}[Proof of Lemma \ref{lem:TD-sm2}]
One can see \cite[Lemma 2.8, Lemma 2.9]{LXY25} for the proof of 
\eqref{eq:T-sm2}-\eqref{eq:T-sm3} and \eqref{TD-sm-es}.
For the proof of \eqref{eq:TD-sm5}, it can be treated in a similar manner as showing \eqref{TD-sm-es}, 
and we only need to replace the estimate of $\| (2^{qs} \|[\Delta_q, u\cdot \nabla] \phi\|_{L^p})_{q\geq -1}\|_{\ell^r}$ (as in \cite[Lemma 2.100]{BCD11})
with the following commutator estimate
\begin{equation}\label{eq:comm-es-end}
  \sup_{q\ge -1} 2^{-q}\,\bigl\|[\Delta_q,\; u\cdot\nabla]\phi\bigr\|_{L^p}
  \leq C \|u\|_{B^{1}_{\infty,1}}\;\|\phi\|_{B^{-1}_{p,\infty}};
\end{equation}
the proof of \eqref{eq:comm-es-end} can be done using the standard Littlewood-Paley theory and one can see \cite{LXY26} for the details.
\end{proof}

\subsection{Auxiliary lemmas}
Some basic facts on the flow map are collected below (see e.g. \cite[Proposition 3.10]{BCD11}).
\begin{lemma}\label{lem:flow}
Assume that $u$ is a divergence-free velocity field belonging to $L^1([0,T]; \dot W^{1,\infty}(\R^d))$. Let $X_t(x)$ be the flow map generated by velocity $u$ which solves that
\begin{equation}\label{eq:flow1}
    \frac{\partial X_t(x)}{\partial t} = u ( X_t(x),t),\quad X_t(x)|_{t=0}=x,
\end{equation}
that is
\begin{equation}\label{eq:flow2}
  X_t(x) = x + \int_0^t u(X_\tau(x),\tau) \dd \tau.
\end{equation}
Then the system \eqref{eq:flow1} has a unique solution $X_t(\cdot):\R^d\mapsto \R^d$ on $[0,T]$ 
which is a volume-preserving bi-Lipschitzian homeomorphism and satisfies that
$\nabla X_t$ and its inverse $\nabla X^{-1}_t$
belong to $L^\infty([0,T]\times \R^d)$ with
\begin{equation}\label{DXest}
  \|\nabla X^{\pm1}_t\|_{L^\infty(\R^d)} \leq C e^{\int_0^t \|\nabla u(\tau)\|_{L^\infty}\dd \tau}.
\end{equation}
In addition, the following statements hold true.
\begin{enumerate}[(1)]
\item If  $u\in L^1([0,T]; C^{1,\gamma}(\R^d))$, then $\nabla X^{\pm1}_t \in L^\infty([0,T]; C^{\gamma}(\R^d))$ with
\begin{equation}\label{X-C1gam-es}
  \|\nabla X^{\pm 1}_t\|_{C^\gamma} \leq C  e^{(2+\gamma)\int_0^t \|\nabla u\|_{L^\infty} \dd \tau} 
  \Big( 1 + \int_0^t \|\nabla u(\tau)\|_{C^\gamma} \dd \tau \Big).
\end{equation}
\item If  $u\in L^1([0,T]; W^{2,\infty}(\R^d))$, then $\nabla X^{\pm1}_t \in L^\infty([0,T];  W^{1,\infty}(\R^d))$ with
\begin{equation}\label{X-W2inf-es}
  \|\nabla^2 X^{\pm 1}_t\|_{L^\infty} \leq C  e^{3\int_0^t \|\nabla u\|_{L^\infty} \dd \tau} 
  \int_0^t \|\nabla^2 u(\tau)\|_{L^\infty} \dd \tau.
\end{equation}
\end{enumerate}
\end{lemma}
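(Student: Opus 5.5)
The plan is the standard Lagrangian argument for Lemma \ref{lem:flow}. Existence, uniqueness and the homeomorphism property follow from the Cauchy--Lipschitz theorem. Since $u\in L^1_T(\dot W^{1,\infty})$, the field $u(\cdot,t)$ is Lipschitz in $x$ with an integrable-in-time constant. Combined with local integrability of $u$ along trajectories, this gives a unique absolutely continuous solution of \eqref{eq:flow2}. The same argument on the backward flow, which solves the reversed-time ODE, yields $X_t^{-1}$.

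\textbf{Bi-Lipschitz bound and volume preservation.} Comparing two trajectories gives
\[
|X_t(x)-X_t(y)|\le |x-y|+\int_0^t\|\nabla u\|_{L^\infty}\,|X_\tau(x)-X_\tau(y)|\,\dd\tau .
\]
Gr\"onwall's inequality then yields \eqref{DXest} for $X_t$, and for $X_t^{-1}$ by symmetry. For smooth $u$, Liouville's formula gives $\frac{\dd}{\dd t}\det\nabla X_t=(\divg u)(X_t)\det\nabla X_t=0$, so $\det\nabla X_t\equiv 1$. For general $u$ I pass to the limit using mollified velocities, whose flows converge locally uniformly by the same Gr\"onwall estimate.

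\textbf{The $C^\gamma$ bound \eqref{X-C1gam-es}.} Differentiate the ODE to get $\partial_t\nabla X_t=(\nabla u)(X_t)\nabla X_t$. For the H\"older seminorm, write
\[
(\nabla u)(X_t(x))\nabla X_t(x)-(\nabla u)(X_t(y))\nabla X_t(y)
\]
as $(\nabla u)(X_t(x))\bigl(\nabla X_t(x)-\nabla X_t(y)\bigr)$ plus $\bigl((\nabla u)(X_t(x))-(\nabla u)(X_t(y))\bigr)\nabla X_t(y)$. The second piece is bounded by $\|\nabla u\|_{C^\gamma}\|\nabla X_t\|_{L^\infty}^{1+\gamma}|x-y|^\gamma$. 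Setting $Z(t)=[\nabla X_t]_{C^\gamma}$, this gives
\[
Z(t)\le \int_0^t\|\nabla u\|_{L^\infty}Z\,\dd\tau+\int_0^t\|\nabla u\|_{C^\gamma}e^{(1+\gamma)\int_0^\tau\|\nabla u\|_{L^\infty}}\,\dd\tau .
\]
Gr\"onwall's inequality and \eqref{DXest} then give the exponent $2+\gamma$. For the inverse flow, use $\nabla X_t^{-1}=(\nabla X_t)^{-1}\circ X_t^{-1}$. Since $\det\nabla X_t=1$, the inverse matrix is the cofactor matrix, which is quadratic in $\nabla X_t$ in 3D (polynomial in general $d$). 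Its $C^\gamma$ seminorm is controlled by products of $\|\nabla X_t\|_{L^\infty}$ and $[\nabla X_t]_{C^\gamma}$, and composing with the Lipschitz map $X_t^{-1}$ costs a factor $\|\nabla X_t^{-1}\|_{L^\infty}^\gamma$. Alternatively, one can run the same ODE argument for the backward flow directly, which gives exactly the stated form; I would choose this route.

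\textbf{The $W^{2,\infty}$ bound \eqref{X-W2inf-es}.} Differentiating once more gives
\[
\partial_t\nabla^2X_t=(\nabla u)(X_t)\nabla^2X_t+(\nabla^2u)(X_t)(\nabla X_t,\nabla X_t).
\]
Gr\"onwall with zero initial data, together with the bound $\|\nabla X_t\|_{L^\infty}^2\le Ce^{2\int\|\nabla u\|}$, yields the factor $e^{3\int_0^t\|\nabla u\|_{L^\infty}}\int_0^t\|\nabla^2u\|_{L^\infty}$. The inverse is treated through the backward flow as before.

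The main obstacle is only bookkeeping: making the exponents match exactly for the inverse maps, and justifying the differentiation in the low-regularity case. I handle both by proving everything for smooth $u$ and concluding by approximation, since all bounds depend only on the stated norms.
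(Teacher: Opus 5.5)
Your proposal is correct and follows the standard Lagrangian argument: Cauchy--Lipschitz, Gr\"onwall for $\nabla X_t$, for its H\"older seminorm and for $\nabla^2 X_t$, the backward flow for $X_t^{-1}$, and mollification in the low-regularity case. The paper gives no proof of its own and simply cites \cite[Proposition 3.10]{BCD11}, which should be this same Gr\"onwall computation (the $(2+\gamma)$ exponent matches yours); your caveat about integrability of $u$ along trajectories is apt, since $\nabla u\in L^1_T(L^\infty)$ alone does not make the ODE solvable, and in the paper's applications this is supplied by $u\in L^2_T(L^\infty)$.
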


We refer to Lemma 6.10 of \cite{HKR10} for this useful commutator estimate as follows.
\begin{lemma}\label{lem:HKR}
Let $d\geq 2$. Let $u$ be a smooth divergence-free vector field of $\R^d$ and $f$ be a smooth scalar function. 
Then, for all $p\in [1,\infty]$ and $j\geq -1$,
\begin{align*}
  \|[\Delta_j, u\cdot\nabla]f\|_{L^p}\leq C \|\nabla u\|_{L^p}\|f\|_{B^{0}_{\infty,\infty}}.
\end{align*}
\end{lemma}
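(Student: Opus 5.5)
The statement to prove is Lemma \ref{lem:HKR}, the commutator bound $\|[\Delta_j,u\cdot\nabla]f\|_{L^p}\le C\|\nabla u\|_{L^p}\|f\|_{B^0_{\infty,\infty}}$, uniform in $j\ge -1$. The plan is to split with Bony's decomposition and estimate each piece by a kernel representation or a Bernstein argument. Write $u\cdot\nabla f=\sum_k u^k\partial_k f$. Then
\[
[\Delta_j,u\cdot\nabla]f=\sum_k\Big([\Delta_j,T_{u^k}]\partial_k f+\Delta_j\big(T_{\partial_k f}u^k\big)-T'_{\Delta_j\partial_k f}u^k\Big),
\]
where $T'_ab=T_ab+R(a,b)$. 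Divergence-freeness will be used to move derivatives: $R(u^k,\partial_kf)=\partial_k R(u^k,f)$.

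\textbf{Paraproduct commutator.} For $[\Delta_j,T_{u}]\cdot\nabla f$, only finitely many $q$ with $|q-j|\le 4$ contribute. Each such term is $[\Delta_j,S_{q-1}u^k]\partial_k\Delta_q f$. Writing $\Delta_j$ as convolution with $h_j=2^{jd}h(2^j\cdot)$ and applying the first-order Taylor formula in the standard way gives
\[
\|[\Delta_j,S_{q-1}u]\cdot\nabla\Delta_qf\|_{L^p}\le C2^{-j}\|\nabla S_{q-1}u\|_{L^p}\|\nabla\Delta_qf\|_{L^\infty}\le C\|\nabla u\|_{L^p}\|f\|_{B^0_{\infty,\infty}}.
\]
Here we use $\|\nabla\Delta_qf\|_{L^\infty}\lesssim 2^q\|\Delta_q f\|_{L^\infty}$ and $2^{q-j}\sim1$. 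For $j=-1$ the same bound holds with the low-frequency kernel.

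\textbf{Remaining terms.} These are $\Delta_j(T_{\nabla f}u)$, $T_{\Delta_j\nabla f}u$ and the remainder. The low-frequency part of $u$ is the delicate issue, since $\|u\|_{L^p}$ is not controlled. For $\Delta_j T_{\partial_kf}u^k$ the sum runs over $|q-j|\le4$, $q\ge0$, so $\|\Delta_q u\|_{L^p}\lesssim2^{-q}\|\nabla u\|_{L^p}$. Also $\|S_{q-1}\nabla f\|_{L^\infty}\lesssim 2^q\|f\|_{B^0_{\infty,\infty}}$, and the powers cancel. For $T_{\Delta_j\nabla f}u=\sum_{q\ge j+2}S_{q-1}\Delta_j\nabla f\,\Delta_qu$ we get a bound by $\sum_{q\ge j}2^{j}\|f\|_{B^0_{\infty,\infty}}2^{-q}\|\nabla u\|_{L^p}$, which is summable geometrically. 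For the remainder, $\Delta_j\partial_k R(u^k,f)=\sum_{q\ge j-3}\Delta_j\partial_k(\Delta_qu^k\widetilde\Delta_qf)$ is bounded by $\sum_{q\ge j-3}2^{j}2^{-q}\|\nabla u\|_{L^p}\|f\|_{B^0_{\infty,\infty}}$ when $q\ge0$. The term $R(u,\Delta_j\nabla f)$ is treated in the same way, summing over $|q-j|\le 2$.

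\textbf{Main obstacle: the block $\Delta_{-1}u$.} This block appears in the remainder and in the paraproduct at $q=-1$, and $\|\Delta_{-1}u\|_{L^p}$ is not bounded by $\|\nabla u\|_{L^p}$. I would handle it by never letting $\Delta_{-1}u$ appear undifferentiated. In the remainder pairing, use $\Delta_{-1}u\cdot\nabla\widetilde\Delta_{-1}f$ together with the commutator structure $[\Delta_j,\Delta_{-1}u\cdot\nabla]$ for small $j$. This structure only involves $\nabla\Delta_{-1}u$ through the Taylor argument above, applied now to the whole low-frequency part. Concretely, I would regroup all terms containing $\Delta_{-1}u$ (equivalently $S_0u$) into $[\Delta_j,S_0u\cdot\nabla]f$ and estimate that via the kernel/Taylor bound. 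This gives $\|\nabla S_0u\|_{L^p}\|\nabla\widetilde S f\|_{L^\infty}$ for the low-frequency part of $f$ that interacts. Constants are uniform in $j$ and $p\in[1,\infty]$, since only Young's inequality and Bernstein's inequality are used.
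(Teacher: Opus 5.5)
The paper does not prove this lemma itself; it cites \cite[Lemma 6.10]{HKR10}. Your argument is correct and is essentially the standard proof of this estimate: Bony's decomposition, the first-order Taylor (kernel) bound for the paraproduct commutators $[\Delta_j,S_{q-1}u^k]\partial_k\Delta_q f$, the divergence-free condition to write the high-frequency remainder as $\partial_k R(u^k,f)$, and keeping the block $\Delta_{-1}u$ inside a commutator so that only $\nabla\Delta_{-1}u$ enters.

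Two minor slips. First, the middle term of your displayed identity should read $\Delta_j(T'_{\partial_k f}u^k)$, since you need, and later treat, $\Delta_j R(u^k,\partial_k f)$. Second, the low-frequency fix is cleanest stated as an exact identity: the two $q=-1$ remainder blocks combine into $[\Delta_j,\Delta_{-1}u\cdot\nabla]\widetilde{\Delta}_{-1}f$. The Taylor bound then controls this by $C\|\nabla\Delta_{-1}u\|_{L^p}\|\nabla\widetilde{\Delta}_{-1}f\|_{L^\infty}\le C\|\nabla u\|_{L^p}\|f\|_{B^0_{\infty,\infty}}$.
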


The following compactness lemma is useful in the proof of the limiting result of infinite Prandtl number.
\begin{lemma}[Aubin-Lions's lemma \cite{L1969}]\label{lem:Aubin-Lions}
Assume that $X_0\hookrightarrow X\hookrightarrow X_1$ are Banach spaces and that 
$X_0$ is compactly embedded in $X$ and
$X $ is continuously embedded in $X_1$. For all $1\leq p,q\leq \infty$, let
\begin{align*}
  V := \left\{u\in L_T^p(X_0)\;:\; \partial_tu\in L_T^q(X_1) \right\}.
\end{align*}
Then we have
\begin{enumerate}
\item if $p<\infty$, then $V$ is compactly embedded in $L_T^p(X)$;
\item if $p=\infty$ and $q>1$, then $V$ is compactly embedded in $C([0,T];X)$.
\end{enumerate}
\end{lemma}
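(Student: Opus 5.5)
The plan is to follow the classical route of Lions and Simon. It combines an interpolation inequality of Ehrling type with a compactness argument in the weak space $X_1$, and then upgrades convergence from $X_1$ to $X$.

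\emph{Step 1 (Ehrling inequality).} First I show that for every $\varepsilon>0$ there is $C_\varepsilon>0$ with
\begin{equation*}
\|v\|_X\le \varepsilon\|v\|_{X_0}+C_\varepsilon\|v\|_{X_1}\qquad\text{for all } v\in X_0 .
\end{equation*}
I argue by contradiction. Otherwise there are $v_n$ with $\|v_n\|_{X_0}=1$ and $\|v_n\|_X>\varepsilon+n\|v_n\|_{X_1}$. By the compact embedding $X_0\hookrightarrow X$, a subsequence converges in $X$ to some $v$, and $\|v\|_X\ge\varepsilon$. On the other hand $\|v_n\|_{X_1}\le C/n\to0$, so $v=0$ in $X_1$. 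Since $X\hookrightarrow X_1$ is injective, $v=0$, which is a contradiction.

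Consequently, for a bounded set $F\subset V$ with $\sup_{u\in F}\|u\|_{L^p_T(X_0)}\le M$, we get for any $u,w\in F$
\begin{equation*}
\|u-w\|_{L^p_T(X)}\le 2\varepsilon M+C_\varepsilon\|u-w\|_{L^p_T(X_1)}.
\end{equation*}
The same inequality holds with $\sup_t$ in place of the $L^p_T$ norm in case (2). So it suffices to prove that $F$ is relatively compact in $L^p_T(X_1)$, respectively in $C([0,T];X_1)$. A diagonal argument over $\varepsilon\to0$ then turns an $X_1$-Cauchy subsequence into an $X$-Cauchy subsequence.

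\emph{Step 2 (case $p<\infty$).} I use the vector-valued Kolmogorov--Riesz criterion (Simon's form), which requires two properties.

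First, time translates are equicontinuous:
\begin{equation*}
\|u(\cdot+h)-u\|_{L^p(0,T-h;X_1)}\le T^{1/p}\sup_t\Big\|\int_t^{t+h}\partial_t u\Big\|_{X_1}.
\end{equation*}
When $q>1$ this is bounded by $T^{1/p}h^{1-1/q}\|\partial_tu\|_{L^q_T(X_1)}$. When $q=1$ it still tends to zero in $L^p$ uniformly over $F$. For this I bound the difference quotient in $L^1$ by $h\|\partial_tu\|_{L^1_T(X_1)}$, interpolate with the $L^\infty$-type control of $u$, and use that $F$ is bounded in $L^p_T(X_0)\subset L^p_T(X_1)$.

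Second, the time averages $\int_{t_1}^{t_2}u\,\dd t$ form a set bounded in $X_0$, hence relatively compact in $X$ and thus in $X_1$. These two properties give relative compactness in $L^p_T(X_1)$, and Step 1 concludes.

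\emph{Step 3 (case $p=\infty$, $q>1$).} Each $u\in F$ has a representative in $C([0,T];X_1)$ satisfying
\begin{equation*}
\|u(t)-u(s)\|_{X_1}\le|t-s|^{1-1/q}\|\partial_tu\|_{L^q_T(X_1)}.
\end{equation*}
This makes $F$ uniformly equicontinuous in $X_1$. For a.e.\ $t$ we have $\|u(t)\|_{X_0}\le M$, so $u(t)$ lies in $K:=\overline{B_{X_0}(M)}^{X}$, which is compact in $X$. For the remaining $t$, $u(t)$ is an $X_1$-limit of points of $K$. Since $K$ is compact in $X_1$ and therefore closed there, $u(t)\in K$ for all $t$. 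Arzel\`a--Ascoli then gives relative compactness in $C([0,T];X_1)$, and Step 1 (with $\sup_t$, which is legitimate because all values lie in $K$) upgrades this to $C([0,T];X)$.

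The main obstacle is Step 3's passage from the a.e.\ bound in $X_0$ to a pointwise statement, together with making Step 1 meaningful for values known only to lie in $K$ rather than in $X_0$. I handle this by applying the Ehrling inequality at a.e.\ $t$ and using continuity in $X$ on $K$: on the compact set $K$, the $X$- and $X_1$-topologies coincide. A secondary delicate point is the endpoint $q=1$ in Step 2, where the translation estimate gives no H\"older rate and one has to argue more carefully as indicated.
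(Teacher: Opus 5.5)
Your proof is correct. The paper gives no proof of this lemma; it quotes it from \cite{L1969}, so the relevant comparison is with the cited source.

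\textbf{Comparison with the cited source.} Lions's theorem in \cite{L1969} is proved by weak compactness. It assumes $X_0$ and $X_1$ reflexive and $1<p,q<\infty$. It therefore does not literally give the statement as written, which allows endpoint exponents, arbitrary Banach spaces, and a $C([0,T];X)$ conclusion in (2). That general form is due to J.~Simon (1987), and your argument is essentially his:
\begin{itemize}
\item an Ehrling inequality reduces compactness in $X$ to compactness in $X_1$;
\item the vector-valued Kolmogorov--Riesz criterion (relatively compact time averages plus uniformly small time translates) handles $p<\infty$;
\item Arzel\`a--Ascoli, with all values in the compact set $K$, handles $p=\infty$, $q>1$.
\end{itemize}
This extra generality matters for the paper. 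Section~\ref{sec:limit} applies the lemma with $p=\infty$ and non-reflexive spaces, e.g.\ $W^{1,\infty}(B_R)$, $C^\beta(B_R)$, $L^\infty(B_R)$ for $\mathcal{W}^{\mathrm{Pr}}$. The $C([0,T];H^{-1}(B_R))$ convergence of $\theta^{\mathrm{Pr}}$ also needs the $p=\infty$ case. Your route covers these uses; Lions's reflexive, $p<\infty$ version does not.

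\textbf{Two small points.} First, the diagonal argument over $\varepsilon\to0$ is unnecessary. A single subsequence that is Cauchy in $L^p_T(X_1)$ (resp.\ $C([0,T];X_1)$) satisfies $\limsup_{n,m}\|u_n-u_m\|\le 2\varepsilon M$ for every $\varepsilon$ at once.

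Second, for $q=1$ in Step~2, state the $L^\infty$ control explicitly:
\begin{equation*}
\sup_{t}\|u(t)\|_{X_1}\le T^{-1}\|u\|_{L^1_T(X_1)}+\|\partial_t u\|_{L^1_T(X_1)} .
\end{equation*}
Interpolating this with $\|u(\cdot+h)-u\|_{L^1(0,T-h;X_1)}\le h\|\partial_tu\|_{L^1_T(X_1)}$ gives $\|u(\cdot+h)-u\|_{L^p(0,T-h;X_1)}\le C h^{1/p}$ uniformly on $F$, which is what you sketched.
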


\section{Existence and uniqueness result of 3D Boussinesq system with large Prandtl number}\label{sec:exi-uni}

%This section is closely related to \cite[Section 3]{LLX24} 
%where the author proved the existence and uniqueness result of 3D Boussinesq system 
%\eqref{eq:BousEq} with $\mathrm{Pr}=1$. 
%Based on this, we only need to prove the global existence of solutions under the condition \eqref{eq:data-cond}. 
%However, to prepare for the subsequent sections, we recompute some \textit{a priori} 
%estimates to get the dependence on $\mathrm{Pr}$.

We first focus on establishing crucial \textit{a priori} estimates assuming that 
$(u,\theta)$ is a smooth solution to the 3D Boussinesq system \eqref{eq:BousEq}.

From the transport equation $\eqref{eq:BousEq}_1$, we have that for every $s\in (3,\infty]$,
\begin{equation}\label{es0.thet}
  \Vert \theta(t) \Vert_{L^1\cap L^s } \leq \Vert\theta_0\Vert_{L^1\cap L^s},\quad \forall t\geq 0.
\end{equation}
Now we pay attention to the estimates for velocity $u$. 
The basic $L^2$-energy estimate for the velocity equation $\eqref{eq:BousEq}_2$ yields
\begin{align*}
  \frac{1}{2 \mathrm{Pr}} \frac{\dd }{\dd t} \|u\|_{L^2}^2 + \|\nabla u\|_{L^2}^2  
  \leq \Big|\int_{\R^3} u^3\, \theta \dd x \Big| \leq \|u\|_{L^2} \|\theta\|_{L^2} 
  \leq \|u\|_{L^2} \|\theta_0\|_{L^2},
\end{align*}
which implies that 
\begin{align*} 
  \|u\|_{L^\infty_T(L^2)} \leq  \|u_0\|_{L^2} + T \,\mathrm{Pr} \|\theta_0\|_{L^2},
\end{align*}
and
\begin{equation}\label{es.v.L2}
 \|u\|_{L^\infty_T(L^2)}^2 + 2 \mathrm{Pr} \|\nabla u\|_{L^2_T(L^2)}^2 
 \leq \|u_0\|_{L^2}^2 + 2T \mathrm{Pr} \|\theta_0\|_{L^2} \big(\|u_0\|_{L^2} 
 +  T \mathrm{Pr} \|\theta_0\|_{L^2}\big).
\end{equation}

%Using the idea of Danchin and Pa\"icu \cite{DanP08b} (see ), 
%we know that under the condition \eqref{eq:data-cond}, 
%there exists an absolute constant $C_0>0$ such that for any $T>0$,
%\begin{equation}
%  \|u\|_{L^\infty_T(L^{3,\infty})} \leq C_0 \big( \|u_0\|_{L^{3,\infty}} + \|\theta_0\|_{L^{1}} \big) 
%  \leq C_0 \frac{ c_*}{\varepsilon} .
%\end{equation}
The velocity equation $\eqref{eq:BousEq}_2$ writes 
\begin{align*}
  \partial_t u  - \mathrm{Pr}\, \Delta u =  \mathbb{P}\, \mathrm{div}\,(u \otimes u) + \mathrm{Pr} \,\mathbb{P}(\theta e_3),
  \quad u|_{t=0} = u_0,
\end{align*}
where $\mathbb{P} := \mathrm{Id} - \nabla \Delta^{-1} \mathrm{div}\,$ is the Leray projection operator,
thus we have
\begin{align*}
  u(t)=e^{t \mathrm{Pr} \Delta} u_0 
  + \int_0^t e^{(t-\tau) \mathrm{Pr} \,\Delta}\mathbb{P}\mathrm{div}(u\otimes u) \dd \tau 
  + \mathrm{Pr} \int_0^t e^{(t-\tau) \mathrm{Pr}\, \Delta} \mathbb{P}(\theta e_3) \dd \tau.
\end{align*}
Arguing as \cite[Section 5.2.2]{DanP08b}, we find
\begin{equation}\label{eq:C}
\begin{aligned}
  \|u\|_{L_t^\infty(L^{3,\infty})} &\leq C_0 \big(\|u_0\|_{L^{3,\infty}} 
  + \tfrac{1}{\mathrm{Pr}}\|u\|^2_{L_t^\infty(L^{3,\infty})}+\|\theta\|_{L_t^\infty(L^{1})} \big) \\
  & \leq C_0 \big(\|u_0\|_{L^{3,\infty}} + \tfrac{1}{\mathrm{Pr}}\|u\|^2_{L_t^\infty(L^{3,\infty})} 
  +\|\theta_0\|_{L^{1}} \big),
\end{aligned}
\end{equation}
with $C_0>0$ a universal constant.
Hence, by using the bootstrap argument, 
provided that
\begin{equation}\label{eq:c*}
  \|u_0\|_{L^{3,\infty}(\mathbb{R}^3)} + \|\theta_0\|_{L^1(\mathbb{R}^3)}  \leq  c_* \mathrm{Pr}, 
\end{equation}  
where $c_*>0$ is a universal small constant (e.g. $c_* \leq \frac{1}{6 C_0^2}$, $C_0$ is the constant in \eqref{eq:C}), 
we obtain that for any $T>0$,
\begin{align}\label{eq:vL3inf-es}
  \|u\|_{L^\infty_T(L^{3,\infty})}\leq 2C(\|u_0\|_{L^{3,\infty}} + \|\theta_0\|_{L^{1}}) \leq  2 C_0 c_* \mathrm{Pr}.
\end{align}

\begin{comment}
  Let $f(x)=C\varepsilon x^2-x+C(\|u_0\|_{L^{3,\infty}}+\|\theta_0\|_{L^{1}})=ax^2+bx+c$ where $C>1$, 
we have $f(\|u_0\|_{L^{3,\infty}})>0,$ so $\|u\|_{L_T^\infty (L^{3,\infty})}$ is bounded provided that 
$b^2-4ac>0, \|u_0\|_{L^{3,\infty}}<-\frac{b}{2a}$ by using the continuity method, 
so we need 
$1-4C^2\varepsilon(\|u_0\|_{L^{3,\infty}}+\|\theta_0\|_{L^{1}})>0, \|u_0\|_{L^{3,\infty}}<\frac{1}{2C\varepsilon}$, 
it is equal to 
$\|u_0\|_{L^{3,\infty}}+\|\theta_0\|_{L^{1}}<\frac{1}{4C^2\varepsilon}, \|u_0\|_{L^{3,\infty}}<\frac{1}{2C\varepsilon}$. 
So we can choose $c^*<\min\{\frac{1}{4C^2}, \frac{1}{2C}\}$, then we can get
\begin{align*}
  \|u\|_{L_T^\infty (L^{3,\infty})}&\leq\frac{-b-\sqrt{b^2-4ac}}{2a} \\
  & = \frac{1-\sqrt{1-4C^2\varepsilon(\|u_0\|_{L^{3,\infty}}+\|\theta_0\|_{L^{1}})}}{2C\varepsilon}\\
  & \leq \frac{1}{2C\varepsilon}\frac{4C^2\varepsilon(\|u_0\|_{L^{3,\infty}}
  + \|\theta_0\|_{L^{1}})}{1+\sqrt{1-4C^2\varepsilon(\|u_0\|_{L^{3,\infty}}+\|\theta_0\|_{L^{1}})}} \\
  & \leq 2C(\|u_0\|_{L^{3,\infty}}+\|\theta_0\|_{L^{1}}).
\end{align*}
\end{comment}

Taking the inner product of $\Lambda u$ with both sides of the velocity equation $\eqref{eq:BousEq}_2$, 
we get
\begin{align*}
  \frac{1}{2}\frac{\dd}{\dd t} \|u\|_{\dot H^{\frac12}}^2 + \mathrm{Pr} \| u\|_{\dot H^{\frac32}}^2
  & \leq \| u\|_{\dot H^{\frac{3}{2}}} \|u\cdot\nabla u\|_{\dot H^{-\frac{1}{2}}} +
  \mathrm{Pr} \| u\|_{\dot H^{\frac{3}{2}}}\|\theta\|_{\dot H^{-\frac{1}{2}}}  \\
  &  \leq \| u\|_{\dot H^{\frac{3}{2}}} \|u\otimes u\|_{\dot H^{\frac{1}{2}}} +
  C \mathrm{Pr} \| u\|_{\dot H^{\frac{3}{2}}} \|\theta\|_{L^{\frac{3}{2}}} .
\end{align*}
In light of the para-differential calculus and using the fact  
$L^{3,\infty}(\R^3)\hookrightarrow \dot B^{-1}_{\infty,\infty}(\R^3)$, we deduce that
\begin{align*}
  \|u\otimes u\|_{\dot H^{\frac{1}{2}}} & \leq C \|T_u u\|_{\dot H^{\frac{1}{2}}} + C \|R(u, u)\|_{\dot H^{\frac{1}{2}}} \\
  & \leq C  \| u\|_{\dot B^{-1}_{\infty,\infty}} \| u\|_{\dot H^{\frac{3}{2}}} 
  \leq C \|u\|_{L^{3,\infty}} \|u\|_{\dot H^{\frac{3}{2}}}.
\end{align*}
Consequently, taking advantage of \eqref{eq:vL3inf-es} gives
\begin{align}\label{es.v.Hfr12}
  \frac{1}{2}\frac{\dd}{\dd t} \|u\|_{\dot H^{\frac12}}^2 +  \mathrm{Pr} \|u\|_{\dot H^{\frac32}}^2
  & \leq C\| u\|_{\dot H^{\frac32}}^2 \| u\|_{L^{3,\infty}} 
  + C \mathrm{Pr} \| u\|_{\dot H^{\frac32}}\|\theta_0\|_{L^{\frac32}}\nonumber\\
  & \leq \Big( 2 C C_0 c_* + \frac{1}{4} \Big) \mathrm{Pr} \| u\|_{\dot H^{\frac32}}^2
  + C \mathrm{Pr} \|\theta_0\|^2_{L^{\frac{3}{2}}}.
\end{align}
By choosing  $ c_* \leq \frac{1}{8C C_0}$ in \eqref{eq:c*}, one can obtain
\begin{align}\label{es.v.Hfr12b}
  \|u\|_{L^\infty_T(\dot H^{\frac12})}^2 + \mathrm{Pr} \| u\|_{L^2_T(\dot H^{\frac32})}^2 
  \leq \| u_0\|^2_{\dot H^{\frac12}} + C T \mathrm{Pr} \|\theta_0\|^2_{L^{\frac{3}{2}}}.
\end{align}

Furthermore, according to estimate \eqref{es.v.Hfr12b} and \cite[Lemma 3.1]{LLX24}, we infer that
\begin{align*}
 \|u\|_{L^2_T(L^\infty)}
  \leq C \Big(1+ \| u_0\|^2_{\dot H^{\frac12}} + C T \mathrm{Pr} (\|\theta_0\|^2_{L^1\cap L^s} + \|\theta_0\|_{L^1\cap L^s}) \Big)^\frac{3}{2},
\end{align*}
and for every $q>3$,
\begin{align}\label{eq:u-es1}
  \|u\|_{L^\infty_T(\dot B^{-1+\frac{3}{q}}_{q,\infty})} + \|u\|_{\widetilde{L}^1_T(\dot B^{1+\frac{3}{q}}_{q,\infty})}
  \leq C \Big(1+ \| u_0\|^2_{\dot H^{\frac12}} + C T \mathrm{Pr} (\|\theta_0\|^2_{L^1\cap L^s} + \|\theta_0\|_{L^1\cap L^s}) \Big)^2 .
\end{align}
As a result, we can establish the \textit{a priori} estimates as in \eqref{eq:v-the-es}. 

Since $\theta\in L^\infty_T(L^1\cap L^s)\hookrightarrow L^\infty_T(\dot B^{-1+\frac{3}{q}}_{q,\infty})$ with $q>3$
and $u$ satisfies \eqref{eq:u-es1}, we can exactly adapt the argument in \cite[Lemma 3.2]{LLX24} or \cite{DanP08}
to conclude the uniqueness part of Theorem \ref{thm:exi-reg}.

Hence, by using a standard approximation process (e.g. see \cite{LLX24}), 
we complete the proof of the global existence and uniqueness of strong solutions for the 3D Boussinesq system~\eqref{eq:BousEq} 
with large Prandtl number.

\begin{comment}
The \textit{a priori} estimates \eqref{es0.thet}, \eqref{es.v.L2} and \eqref{es.v.Hfr12b}
are enough to show the existence (e.g. by using a standard approximation process).
First solve the Cauchy problem \eqref{eq:BousEq} with frequency localized initial data
$(\theta_{0,n},v_{0,n}) := (S_n \theta_0, S_n v_0)$, $n\in \N$,
where $S_n$ is defined by \eqref{eq:Del-Sj}. It is clear that $(\theta_{0,n},v_{0,n})$ belongs to $H^s(\R^3)$ for all $s$. Then, it follows from  \cite{DanP08b}
that we obtain a local (unique) smooth solution $(\theta_n, v_n, \nabla p_n)$ to the system \eqref{eq:BousEq} associated with
$(\theta_{0,n}, v_{0,n})$. Moreover, the \textit{a priori} estimates below ensure that $(\theta_n,v_n)$ satisfies
\eqref{eq:v-the-es} on $[0,T]$ uniformly in $n$. Hence by using Rellich compactness theorem for instance (see \cite{PGLR1}), one can pass to the limit $n\rightarrow \infty$ (up to a subsequence) to show that there exist functions $(\theta,v,\nabla p)$ satisfying \eqref{eq:v-the-es}
which are solutions to the 3D Boussinesq system \eqref{eq:BousEq} in the sense of distribution. \\
\end{comment}

\section{Boundary regularity persistence of temperature patches for the 3D Boussinesq system}\label{sec:C1gam}
In this section, we prove the global persistence of $C^{1,\gamma}$, $W^{2,\infty}$ and $C^{2,\gamma}$
boundary regularity of the temperature patches \eqref{eq:the-patch-exp} along the evolution. 
The feature is that these results are uniform in the Prandtl number $\mathrm{Pr}$.
\vskip1mm

Let $\Omega=(\Omega^1,\Omega^2,\Omega^3) = \nabla\wedge u$ be the vorticity of the fluid, 
where the notation $\wedge $ stands for the wedge operation, that is,
$\Omega=\nabla\wedge u=\big(\partial_2u^3-\partial_3u^2, \partial_3u^1-\partial_1u^3, \partial_1u^2-\partial_2u^1\big)^t$.
Applying the operator $\nabla\wedge $ to the velocity equation $\eqref{eq:BousEq}_2$ gives the vorticity equation:
\begin{equation}\label{eq.Omeg}
  \frac{1}{\mathrm{Pr}} \partial_t \Omega + \frac{1}{\mathrm{Pr}} u\cdot \nabla \Omega -\Delta \Omega 
  = \frac{1}{\mathrm{Pr}} \Omega\cdot \nabla u + (\partial_2 \theta, -\partial_1\theta, 0)^t.
\end{equation}
Note that
\begin{equation*}
  \frac{1}{\mathrm{Pr}} \partial_t \Omega + \frac{1}{\mathrm{Pr}} u\cdot \nabla \Omega -\Delta 
  \big(\Omega - \Lambda^{-2} (\partial_2 \theta, -\partial_1\theta,0)^t \big) 
  = \frac{1}{\mathrm{Pr}} \Omega\cdot\nabla u,
\end{equation*}
where $\Lambda := (-\Delta)^{1/2}$.
By setting
\begin{align}\label{eq:op-R-1}
  \mathcal{R}_{-1,j} := \partial_j \Lambda^{-2},\;\;j=1,2,\quad  \textrm{and}\quad
  \mathcal{R}_{-1} := (\mathcal{R}_{-1,2}, - \mathcal{R}_{-1,1},0)^t,
\end{align}
then $ \mathcal{R}_{-1} \theta = \Lambda^{-2} (\partial_2 \theta, -\partial_1\theta,0)^t$ 
and this vector-valued quantity satisfies
\begin{align*}
  \partial_t \mathcal{R}_{-1}\theta + u\cdot \nabla \mathcal{R}_{-1}\theta = - [\mathcal{R}_{-1},u\cdot\nabla]\theta,
\end{align*}
with
\begin{align}\label{def:R-1comm}
  [\mathcal{R}_{-1}, u\cdot\nabla]\theta : = \big([\mathcal{R}_{-1,2},u\cdot\nabla]\theta,  -[\mathcal{R}_{-1,1},u\cdot\nabla]\theta,0\big)^t .
\end{align}
By introducing a new unknown $\Gamma = (\Gamma^1,\Gamma^2,\Gamma^3)$ defined as
\begin{equation}\label{Gamma}
  \Gamma:= \Omega - \mathcal{R}_{-1}\theta, \end{equation}
we infer that $\Gamma$ satisfies
\begin{equation}\label{eq.Gamm}
  \frac{1}{\mathrm{Pr}}\partial_t \Gamma + \frac{1}{\mathrm{Pr}} u\cdot\nabla \Gamma -\Delta\Gamma 
  = \frac{1}{\mathrm{Pr}}\Omega\cdot \nabla u + \frac{1}{\mathrm{Pr}} [\mathcal{R}_{-1},u\cdot\nabla]\theta. 
\end{equation}
The good unknown $\Gamma$ plays a significant role in the proof of the persistence of boundary regularity of temperature patches
in our paper.

\subsection{Persistence of the \texorpdfstring{$C^{1,\gamma}$}{C(1,gamma)}-boundary regularity}
In light of \eqref{X-C1gam-es},
one can get that $\nabla u\in L_T^1(C^\gamma(\R^3))$ gives rise to $\nabla X_t^{\pm 1}(x)\in L^\infty(0,T; C^{\gamma}(\R^3))$,
which clearly yields
\begin{equation*}
  \partial D_0\in C^{1,\gamma}\quad \Longrightarrow \quad \partial D(t)= X_t(\partial D_0)\in L^\infty(0,T;C^{1,\gamma}).
\end{equation*}
So we only need to prove the regularity of the velocity $u$.
More precisely, we are going to prove the following propositions.
\begin{proposition}\label{prop:ap-es1}
Suppose that $(u,\theta)$ is a smooth solution of 3D Boussinesq system \eqref{eq:BousEq} with
\begin{itemize}
\item $u_0\in H^1(\DD)$,
\item $\nabla\cdot u_0=0$,
\item $\theta_0\in L^1\cap L^6(\DD)$.
\end{itemize}
Let $\PPr \in [\Pr_*,\infty)$, with $\Pr_*>0$ given by \eqref{eq:data-cond}.
Then, there exists a constant $C>0$ depending only on the norms of $(u_0,\theta_0)$ 
but independent of $\mathrm{Pr}$ such that for every $T>0$ and $r\in[1,\frac43)$, we have
%for $\alpha =1$,
%\begin{align}\label{eq:nab-u-es-alp1}
%  \varepsilon \|\nabla u\|_{L^\infty_T(L^2)}^2 + \| u\|_{L^2_T(\dot H^2)}^2
%  \leq \varepsilon \|\nabla u_0\|_{L^2}^2 + \|\theta_0\|_{L^2}^2 T,
%\end{align}
%\begin{align}\label{eq:uLip-es}
%  \|\nabla u\|_{L^\infty_T(L^p)} + \|\nabla u\|_{L^1_T(B^{\frac{2}{p}(2\alpha-1)}_{\infty,1})} + \|\Gamma\|_{L^\infty_T(L^p)} +
%  \left\|\Gamma\right\|_{L_T^1 (B_{p, 1}^{\frac{4\alpha}{p}})} \leq C e^{C T} ,
%\end{align}
%for $\alpha \in(\frac{1}{2},1)$,
\begin{align}\label{eq:u-Gam-es2}
  \|(\nabla u,\Gamma)\|_{L^\infty_T(L^2)} + \|\Gamma\|_{L^1_T(B^{\frac{3}{2}}_{2,1})}
  + \left\|\Gamma\right\|_{\widetilde{L}_T^r (B_{2, \infty}^{\frac{2}{r}})}
  + \|\nabla u\|_{\widetilde{L}^1_T(C^{\frac{1}{2}})} + \|\nabla u\|_{L^r_T(L^\infty)}
  \leq C e^{C T} .
\end{align}

%\item If $\mathbf{D} = \mathbb{R}^2$, $\alpha =1$, we have that for all $\gamma \in (0,1)$,
%\begin{align}\label{eq:u-Gam-es2-2}
 % \|(\nabla u,\Gamma)\|_{L^\infty_T(L^2)} + \|\Gamma\|_{L^1_T(B^1_{2,1})}
 % + \left\|\Gamma\right\|_{\widetilde{L}_T^1 (B_{2, \infty}^2)}
 % + \|\nabla u\|_{L^1_T(C^\gamma)}
 % \leq C e^{C T + C E_\varepsilon(T)} ,
%\end{align}
%where $E_\varepsilon(t)$ defined by \eqref{eq:u-L2-es} is the energy bound.
\end{proposition}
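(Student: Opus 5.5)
Our plan is to work with the good unknown $\Gamma=\Omega-\mathcal{R}_{-1}\theta$ of \eqref{Gamma} and its equation \eqref{eq.Gamm}. In that equation the dissipation has coefficient $1$, while the time derivative, the transport term and every source term carry the factor $\frac1{\PPr}$. Hence, after multiplying by $\PPr$, each source appears with a small prefactor, and every estimate we derive should be uniform in $\PPr\ge\PPr_*$. We will then recover $\nabla u$ from $\Gamma$ and $\theta$ through the decomposition \eqref{eq.v.GamThe0}.

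\textbf{Step 1: $L^2$ bounds.} We first take the $L^2$ inner product of the velocity equation with $-\Delta u$. The source gives $\int \theta e_3\cdot(-\Delta u)\le \|\theta_0\|_{L^2}\|\Delta u\|_{L^2}$. For the nonlinear term we use
\[
\tfrac1{\PPr}\big|\langle u\cdot\nabla u,\Delta u\rangle\big|\le \tfrac{C}{\PPr}\|u\|_{L^{3,\infty}}\|\nabla u\|_{L^{6,2}}\|\Delta u\|_{L^2}\le C C_0c_*\|\Delta u\|_{L^2}^2,
\]
which relies on \eqref{eq:vL3inf-es} and can be absorbed into the dissipation. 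This yields
\[
\tfrac1{\PPr}\|\nabla u\|_{L^\infty_T(L^2)}^2+\|\Delta u\|_{L^2_TL^2}^2\le \tfrac1{\PPr}\|\nabla u_0\|_{L^2}^2+CT.
\]
This bound alone is not uniform for $\|\nabla u\|_{L^\infty_TL^2}$, so we instead run the $L^2$ estimate on \eqref{eq.Gamm}:
\[
\tfrac1{2\PPr}\tfrac{\dd}{\dd t}\|\Gamma\|_{L^2}^2+\|\nabla\Gamma\|_{L^2}^2\le \tfrac1{\PPr}\big(\|\Omega\cdot\nabla u\|_{\dot H^{-1}}+\|[\mathcal R_{-1},u\cdot\nabla]\theta\|_{\dot H^{-1}}\big)\|\nabla\Gamma\|_{L^2}.
\]
Both forcing terms are bounded by $\|u\|_{L^{3,\infty}}\|\nabla u\|_{L^2}$ and $\|u\|_{L^{3,\infty}}\|\theta\|_{L^{2}}$, up to Lorentz-space product rules. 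After multiplying by $\PPr$, the smallness $\|u\|_{L^{3,\infty}}\le 2C_0c_*\PPr$ together with $\|\nabla u\|_{L^2}\lesssim\|\Gamma\|_{L^2}+\|\theta\|_{L^{6/5}}$ closes the argument by Gr\"onwall. This gives $\|(\nabla u,\Gamma)\|_{L^\infty_TL^2}\le Ce^{CT}$ uniformly, and additionally $\PPr\|\nabla\Gamma\|_{L^2_TL^2}^2\le Ce^{CT}$.

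\textbf{Step 2: smoothing bounds for $\Gamma$.} We rewrite \eqref{eq.Gamm} as a transport–diffusion equation $\partial_t\Gamma+u\cdot\nabla\Gamma-\PPr\Delta\Gamma=f$, with $f=\Omega\cdot\nabla u+[\mathcal{R}_{-1},u\cdot\nabla]\theta$. We then apply a Chemin–Lerner smoothing estimate in the spirit of \eqref{TD-sm-es}, but with the commutator treated via Lemma \ref{lem:HKR}, so that no $\|\nabla u\|_{L^1_TL^\infty}$ enters the exponent; Gr\"onwall is applied only at the end. The scaled dissipation gives
\[
\PPr^{1/r}\|\Gamma\|_{\widetilde L^r_T(B^{2/r}_{2,\infty})}\lesssim \|\Gamma_0\|_{L^2}+\|f\|_{L^1_T(L^2)}+\text{commutator}.
\]
For the sources, $\|\Omega\cdot\nabla u\|_{L^2}\le\|\nabla u\|_{L^\infty}\|\nabla u\|_{L^2}$, and Lemma \ref{lem:Rbeta-cm} with $s\to0$ bounds the commutator by $\|\nabla u\|_{L^2}\|\theta\|_{L^1\cap L^6}+\|u\|_{L^6}\|\theta\|_{L^{6/5}}$. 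Since $\PPr\ge\PPr_*\ge 1$ after normalisation, we may drop the factor $\PPr^{1/r}$. The endpoint $r=1$ in a $B^{3/2}_{2,1}$ version follows from the same estimate combined with interpolation between $L^\infty_TL^2$ and $\widetilde L^{r}_TB^{2/r}_{2,\infty}$.

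\textbf{Step 3: recovering $\nabla u$, and the main obstacle.} By \eqref{eq.v.GamThe0},
\[
\nabla u=\Lambda^{-2}\nabla\nabla\wedge\Gamma+\nabla^2\partial_3\Lambda^{-4}\theta+\Lambda^{-2}\nabla\theta\otimes e_3.
\]
The $\theta$-parts lie in $L^\infty_T(C^{1/2}\cap L^\infty)$ because $\theta\in L^1\cap L^6$: these are operators of order $-1$, and $L^6\subset B^{-1/2}_{\infty,\infty}$. The $\Gamma$-part is controlled using $B^{3/2}_{2,1}\hookrightarrow B^0_{\infty,1}\subset L^\infty$ and $B^{2/r}_{2,\infty}\hookrightarrow B^{2/r-3/2}_{\infty,\infty}$. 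With $r<4/3$ we have $2/r-3/2>0$, which gives $L^r_TL^\infty$, while the $L^1_T$ bound gives $\widetilde L^1_T(C^{1/2})$ after interpolation. This explains the range $r\in[1,\frac43)$.

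The main obstacle is the circularity: the source $\Omega\cdot\nabla u$ requires $\|\nabla u\|_{L^\infty}$, which Step 3 in turn controls through $\Gamma$. We plan to close it by feeding $\|\nabla u\|_{L^1_tL^\infty}\lesssim \|\Gamma\|_{L^1_tB^{3/2}_{2,1}}+t\,C$ back into Step 2, and then applying Gr\"onwall in $t$ using the uniform $L^\infty_TL^2$ bound on $\nabla u$. This produces the claimed bound $Ce^{CT}$.
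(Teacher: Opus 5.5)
\paragraph{Main gap: Step 1 is not uniform in $\PPr$.}
Step 1 does not deliver the $\PPr$-uniform bound $\|(\nabla u,\Gamma)\|_{L^\infty_T(L^2)}\le Ce^{CT}$, and everything afterwards uses it (the source bounds in Step 2 and the absorption in Step 3). Two things go wrong.

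First, the estimate $\|\Omega\cdot\nabla u\|_{\dot H^{-1}}\lesssim\|u\|_{L^{3,\infty}}\|\nabla u\|_{L^2}$ loses a derivative. Under $u\mapsto\lambda u(\lambda\,\cdot)$ the left side scales like $\|\nabla^2u\|_{L^2}$ and the right side like $\|\nabla u\|_{L^2}$. Since $\mathrm{div}\,\Omega=0$, the correct bound is $\|\Omega\cdot\nabla u\|_{\dot H^{-1}}\le\|u\otimes\Omega\|_{L^2}\lesssim\|u\|_{L^{3,\infty}}\|\nabla\Omega\|_{L^2}$.

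Second, even granting your bound, putting $\|u\|_{L^{3,\infty}}\le 2C_0c_*\PPr$ inside Gr\"onwall destroys uniformity. After multiplying by $\PPr$ and using Young you get
\[
\frac{\dd}{\dd t}\|\Gamma\|_{L^2}^2\le\frac{C}{\PPr}\|u\|_{L^{3,\infty}}^2\big(\|\Gamma\|_{L^2}^2+C\big)\le Cc_*^2\,\PPr\,\big(\|\Gamma\|_{L^2}^2+C\big),
\]
which gives a factor $e^{Cc_*^2\PPr T}$. On $\R^3$ there is no Poincar\'e inequality to absorb $\|\Gamma\|_{L^2}$ into $\PPr\|\nabla\Gamma\|_{L^2}^2$.

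The paper handles this step as follows:
\begin{itemize}
\item it bounds $\int\Gamma\otimes\Omega:\nabla u\,\dd x\le\|\Omega\|_{L^2}\|\nabla u\|_{L^3}\|\Gamma\|_{L^6}$, with $\|\nabla u\|_{L^3}\lesssim\|u\|_{\dot H^{3/2}}$;
\item it bounds the commutator in $L^{6/5}$ by $\|u\|_{L^6}\|\theta\|_{L^{3/2}}$;
\item the Gr\"onwall exponent is then $\frac{C}{\PPr}\int_0^T(\|u\|_{\dot H^{3/2}}^2+\|\theta_0\|^2_{L^1\cap L^6})\,\dd t$, which is uniformly bounded thanks to the $\PPr$-weighted dissipation in \eqref{es.v.Hfr12b}.
\end{itemize}

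Your Lorentz route can also be repaired. Use the correct bound and split $\|\nabla\Omega\|_{L^2}\le\|\nabla\Gamma\|_{L^2}+C\|\theta\|_{L^2}$. Absorb the $\|\nabla\Gamma\|_{L^2}^2$ part into the dissipation using $c_*$ small. For the $\theta$-driven terms, including the commutator, use the $\PPr$-independent bound $\|u\|_{L^\infty_T(L^{3,\infty})}\le 2C_0(\|u_0\|_{L^{3,\infty}}+\|\theta_0\|_{L^1})$ from \eqref{eq:vL3inf-es}, so that Young leaves only a source of size $O(\PPr^{-1})$. The smallness $c_*\PPr$ may be spent against the dissipation, never inside a Gr\"onwall coefficient.

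Your discarded bound $\|\Delta u\|^2_{L^2_T(L^2)}\le\PPr^{-1}\|\nabla u_0\|_{L^2}^2+CT$ is actually uniform. It could serve as the time-integrable Gr\"onwall factor, e.g. via $\|\Gamma\|_{L^3}\|\Omega\|_{L^2}\|\nabla u\|_{L^6}$.

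\paragraph{Secondary issue: closing the circularity.}
Breaking the circularity in Step 3 by Gr\"onwall does not work as described. Set $Y=\|\Gamma\|_{L^1_T(B^{3/2}_{2,1})}$. Feed $\|\nabla u\|_{L^1_T(L^\infty)}\lesssim Y+CT$, and the $\|\Gamma\|_{L^1_T(B^0_{\infty,\infty})}$ term produced by Lemma \ref{lem:HKR}, back into your Step 2 bound. The result is $Y\le A+KY$ with $K\approx\|\nabla u\|_{L^\infty_T(L^2)}\sim e^{CT}$.

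This $K$ is not small. You dropped the $\PPr^{1/r}$ factor, and in any case $\PPr_*$ does not depend on $T$. Your Step 2 estimate is a smoothing bound global on $[0,T]$ (with $\Gamma_0$ only in $L^2$). So it does not produce an inequality $Y(t)\le A+\int_0^t g\,Y\,\dd\tau$ to which Gr\"onwall applies.

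The paper obtains the needed small factor by splitting frequencies:
\begin{itemize}
\item blocks $q<N$ are bounded by $2^{\frac32N}\|\Gamma\|_{L^1_T(L^2)}$;
\item blocks $q\ge N$ keep a factor $2^{-\frac12N}$ from the parabolic gain;
\item choosing $C2^{-\frac12N}e^{CT}\approx\frac12$ allows absorption, see \eqref{eq:Gam-B121-2};
\item the $\widetilde L^r_T(B^{2/r}_{2,\infty})$ bounds then follow directly.
\end{itemize}
Your interpolation remark can be turned into exactly this absorption, using a free parameter or Young's inequality, but not into a Gr\"onwall argument.

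The rest agrees with the paper: Lemma \ref{lem:HKR} for $[\Delta_q,u\cdot\nabla]\Gamma$, the commutator and source bounds, and the Step 3 embeddings explaining $r<\frac43$.
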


\begin{proof}[Proof of Proposition \ref{prop:ap-es1}]
Taking the $L^2$ inner product of the $\Gamma$-equation \eqref{eq.Gamm} 
with $\Gamma$, and using the fact that $\mathrm{div}\,u=0$, we obtain
\begin{align*}
  \frac{1}{2} \frac{\dd }{\dd t} \|\Gamma(t)\|_{L^2}^2 + \mathrm{Pr} \|\nabla \Gamma(t)\|_{L^2}^2 
  &=\int_{\R^3} \Gamma\otimes\Omega:\nabla u \dd x + \int_{\R^3}[\mathcal{R}_{-1}, u\cdot \nabla]\theta\cdot\Gamma \dd x\\
  & \leq  C\|\Omega\|_{L^2}\|\nabla u\|_{L^{3}}\|\Gamma\|_{L^{6}} +  C\|[\mathcal{R}_{-1}, u\cdot \nabla]\theta\|_{L^{\frac65}} \|\Gamma\|_{L^6}.
\end{align*}
Noting that $[\mathcal{R}_{-1}, u\cdot \nabla]\theta=\mathcal{R}_{-1}\text{div}\,( u\theta)-u\cdot \nabla\mathcal{R}_{-1}\theta$ (using  $\mathrm{div}\,u=0$ again), and applying the Calderón–Zygmund theorem and the Sobolev embedding $\dot{H}^1(\R^3)\hookrightarrow L^6(\R^3)$, we deduce that 
\begin{align*}
  \|[\mathcal{R}_{-1}, u\cdot \nabla]\theta(t)\|_{L^{\frac{6}{5}}}
  & \leq C \| \mathcal{R}_{-1} \mathrm{div}\,(u\,\theta) (t)\|_{L^{\frac{6}{5}}} 
  + C \|u\cdot \nabla \mathcal{R}_{-1}\theta (t)\|_{L^{\frac{6}{5}}} \\
  & \leq C \|u(t)\|_{L^6}\|\theta(t)\|_{L^\frac{3}{2}} \\
  & \leq C \|\nabla u(t)\|_{L^2}\|\theta_0\|_{L^{\frac{3}{2}}} 
  \leq C \|\Omega(t)\|_{L^2}\|\theta_0\|_{L^1\cap L^6}.
\end{align*}
Taking advantage of the Sobolev embedding $\dot{H}^{\frac{3}{2}}(\mathbb{R}^3) 
\hookrightarrow \dot{W}^{1,3}(\mathbb{R}^3)$ 
and the relation $\Omega = \Gamma + \mathcal{R}_{-1}\theta$, we arrive at
\begin{align}\label{eq:Gam-Ene-es1-2}
  \frac{1}{2} \frac{\dd }{\dd t} &\|\Gamma(t)\|_{L^2}^2 + \mathrm{Pr} \|\nabla \Gamma(t)\|_{L^2}^2 \nonumber \\
  & \leq C\|\Omega(t)\|_{L^2}\|\theta_0\|_{L^1\cap L^6}\|\nabla\Gamma(t)\|_{L^2} 
  + C\|\Omega(t)\|_{L^2}\| u(t)\|_{\dot{H}^{\frac32}}\|\nabla \Gamma(t)\|_{L^{2}} \nonumber \\
  &\leq C\frac{1}{\mathrm{Pr}}\|\Omega(t)\|_{L^2}^2 
  \big(\|u(t)\|_{\dot{H}^{\frac32}}^2 
  +\|\theta_0\|_{L^1\cap L^6}^2\big)+\tfrac{1}{2}\mathrm{Pr}\|\nabla \Gamma(t)\|_{L^{2}}^2 \nonumber \\
  & \leq C\frac{1}{\mathrm{Pr}}\big(\|\Gamma(t)\|_{L^2}^2 +  \|\mathcal{R}_{-1}\theta(t)\|_{L^2}^2\big) \big(\|u(t)\|_{\dot{H}^{\frac{3}{2}}}^2 
  + \|\theta_0\|_{L^1\cap L^6}^2\big) 
  + \frac{1}{2}\mathrm{Pr}\|\nabla \Gamma(t)\|_{L^{2}}^2 \nonumber \\
  &\leq C \frac{1}{\mathrm{Pr}} \big(\|\Gamma(t)\|_{L^2}^2 
  +\|\theta_0\|_{L^1\cap L^6}^2\big) \big(\|u(t)\|_{\dot{H}^{\frac32}}^2 
  + \|\theta_0\|_{L^1\cap L^6}^2\big) + \frac{1}{2}\mathrm{Pr}\|\nabla \Gamma(t)\|_{L^2}^2,
\end{align}
where in the last line we have used the Hardy-Littlewood-Sobolev inequality
\begin{align}\label{eq:Rbet-L2es}
  \|\mathcal{R}_{-1} \theta(t)\|_{L^2} \leq C \|\theta(t)\|_{L^{\frac{6}{5}}} \leq C \|\theta_0\|_{L^1\cap L^6}.
\end{align}
Applying Grönwall’s inequality together with \eqref{es.v.Hfr12b}, we obtain
\begin{equation}\label{es:GamL2-2}
\begin{split}
  \|\Gamma\|_{L^\infty_T(L^2)}^2 + \mathrm{Pr} \|\nabla \Gamma\|_{L^2_T(L^2)}^2
  & \leq C \big( \|\Gamma_0\|_{L^2}^2+ \|\theta_0\|_{L^1\cap L^6}^2 \big)
  \exp\big\{C \tfrac{1}{\mathrm{Pr}}(\|u\|_{L_T^2\dot{H}^{\frac32}}^2+\|\theta_0\|_{L^1\cap L^6}^2T)\big\} \\
  & \leq C e^{CT},
\end{split}
\end{equation}
where we have used the fact that $\Gamma_0:= \Omega_0-\mathcal{R}_{-1} \theta_0$ and 
\begin{align}\label{es.Gamma_0}
  \|\Gamma_0\|_{L^2}\leq \|\Omega_0\|_{L^2} + \|\mathcal{R}_{-1} \theta_0\|_{L^2}
  \leq \|u_0\|_{\dot H^1} + \|\theta_0\|_{L^{\frac65}} < \infty.
\end{align}
Using \eqref{eq:Rbet-L2es} and \eqref{es.Gamma_0} again, we also infer that
\begin{align}\label{es:nab-uL2-2}
  \|\nabla u\|_{L^\infty_T(L^2)} \leq \|\Omega\|_{L^\infty_T(L^2)}
  \leq \|\Gamma\|_{L^\infty_T(L^2)} + \|\mathcal{R}_{-1}\theta\|_{L^\infty_T(L^2)}
  \leq C e^{CT}.
\end{align}

Next, we consider the estimation of $\|\Gamma\|_{L^1_T(B^{\frac{3}{2}}_{2,1})}$ 
and $\|\Gamma\|_{\widetilde{L}^r_T(B^{\frac{2}{r}}_{2,\infty})}$.
For every $q\in \mathbb{N}$, applying the frequency localization operator
$\Delta_q$ to the equation \eqref{eq.Gamm} yields
\begin{equation}\label{eq:Delta-q-Gamm}
\begin{aligned}
  \partial_t \Delta_q\Gamma + u \cdot \nabla \Delta_q\Gamma - \mathrm{Pr} \,\Delta \Delta_q\Gamma
  = - \left[\Delta_q, u \cdot \nabla\right] \Gamma +
  \Delta_q\big(\left[\mathcal{R}_{-1}, u \cdot \nabla\right] \theta+ \Omega\cdot\nabla u\big)  := f_q .
\end{aligned}
\end{equation}
Taking the scalar product of the above equation with $\Delta_q \Gamma$, we get
\begin{align*}
  \frac{1}{2}\frac{\dd}{\dd t} \|\Delta_q\Gamma(t)\|_{L^2}^2 
  + c_0 \mathrm{Pr} \,2^{2 q} \| \Delta_q \Gamma(t)\|_{L^2}^2
  \leq \|\Delta_q \Gamma\|_{L^2} \|f_q\|_{L^2},
\end{align*}
with $c_0>0$ some absolute constant. 
Integrating in the time variable leads to
\begin{align*}
  \|\Delta_q \Gamma(t)\|_{L^2} \leq e^{- c_0 \mathrm{Pr}\, 2^{2q} t} 
  \|\Delta_q \Gamma_0\|_{L^2}
  + \int_0^t e^{- c_0 \mathrm{Pr} (t-\tau) 2^{2 q}} \|f_q(\tau)\|_{L^2} \dd \tau,
\end{align*}
and
\begin{align}\label{eq:Del-Gam-L2-2}
  \|\Delta_q\Gamma\|_{L^r_t(L^2)} \leq (\tfrac{1}{c_0\mathrm{Pr}})^{\frac{1}{r}} 2^{-\frac{2 q}{r}} \|\Delta_q \Gamma_0\|_{L^2}
  +(\tfrac{1}{c_0\mathrm{Pr}})^{\frac{1}{r}} 2^{-\frac{2q}{r}} \|f_q\|_{L^1_t(L^2)}.
\end{align}
Noticing that
\begin{align*}%\label{eq:the-bdd1}
  \|[\mathcal{R}_{-1},u\cdot\nabla]\theta\|_{L^1_T(L^2)} 
  & \leq C \|\mathcal{R}_{-1} \mathrm{div}\, (u\,\theta)\|_{L^1_T(L^2)}
  + C \|u\cdot \nabla \mathcal{R}_{-1}\theta\|_{L^1_T(L^2)} \\
  & \leq C\|u\|_{L_T^\infty (L^6)}\|\theta\|_{L_T^1(L^3)}
  \leq C\|\nabla u\|_{L_T^\infty (L^2)}\|\theta\|_{L_T^1(L^3)},
\end{align*}
and
\begin{align*}
  \|\mathcal{R}_{-1}\theta\cdot\nabla u\|_{L_T^1 (L^2)} 
  & \leq C\|\mathcal{R}_{-1}\theta\|_{L_T^1(L^\infty)}\|\nabla u\|_{L_T^\infty (L^2)} \\
%  & \leq C\|\mathcal{R}_{-1}\theta\|_{L_T^1(B^0_{\infty,1})}\|\nabla u\|_{L_T^\infty (L^2)} \\
  &\leq C \Big(\|\Delta_{-1}\mathcal{R}_{-1}\theta\|_{L_T^1(L^2)} 
  + \sum_{j\in \mathbb{N}} \|\Delta_j \mathcal{R}_{-1}\theta\|_{L_T^1(L^\infty)} \Big)\|\nabla u\|_{L_T^\infty (L^2)}\\
  &\leq C \Big(\|\Delta_{-1} \theta\|_{L_T^1(L^{\frac{6}{5}})} 
  + \sum_{j\in \mathbb{N}} 2^{-\frac{1}{4}j} 
  \|\Delta_j \theta\|_{L_T^1(L^4)} \Big)\|\nabla u\|_{L_T^\infty (L^2)}\\
  &\leq C\|\theta\|_{L_T^1(L^1\cap L^6)}\|\nabla u\|_{L_T^\infty (L^2)},
\end{align*}
together with  Lemma \ref{lem:HKR}, we find 
\begin{equation}\label{es:fq-L1L2}
\begin{split}
  \|f_q\|_{L^1_T(L^2)} & \leq \|[\Delta_q,u\cdot \nabla]\Gamma\|_{L^1_T(L^2)}
  + C\|[\mathcal{R}_{-1},u\cdot\nabla]\theta\|_{L^1_T(L^2)} 
  + C \|(\mathcal{R}_{-1}\theta + \Gamma)\cdot \nabla u\|_{L^1_T(L^2)} \\
  & \leq C \big(\|\theta\|_{L^1_T(L^1\cap L^6)}  + \|\Gamma\|_{L^1_T(L^\infty)} 
  \big)\|\nabla u\|_{L^\infty_T(L^2)} \\
  & \leq C \big(\|\theta_0\|_{L^1\cap L^6} T  + \|\Gamma\|_{L^1_T(L^\infty)} 
  \big)\|\nabla u\|_{L^\infty_T(L^2)} .
\end{split}
\end{equation}
Let $N\in \mathbb{N}$ be a constant chosen later. 
Inserting \eqref{es:fq-L1L2} into \eqref{eq:Del-Gam-L2-2}, and using \eqref{es:GamL2-2}, \eqref{es:nab-uL2-2}, we have
\begin{align}\label{eq:Gam-B121-2}
  \|\Gamma\|_{L^1_T(B^{3/2}_{2,1})} &= \displaystyle\sum_{-1\leq q < N} 2^{\frac32q} \|\Delta_q \Gamma\|_{L^1_T(L^2)}
  + \sum_{q \geq N} 2^{\frac32q} \|\Delta_q \Gamma\|_{L^1_T(L^2)} \notag \\
  &\leq C 2^{\frac32N} \|\Gamma\|_{L^1_T(L^2)} + C \sum_{q\geq N} 2^{-\frac12q} \big( \|\Gamma_0\|_{L^2} 
  +\| f_q\|_{L^1_T(L^2)} \big)  \nonumber \\
  &\leq C 2^{\frac32N}  \|\Gamma\|_{L^1_T(L^2)}+ \,C  2^{-\frac12 N} \Big( \|\Gamma_0\|_{L^2}  + \|\nabla u\|_{L^\infty_T(L^2)}
  \big(\|\Gamma\|_{L^1_T(L^\infty)}  + T \|\theta\|_{L^1\cap L^6}\big) \Big) \nonumber \\
  &\leq C e^{CT} 2^{\frac32N} + C 2^{-\frac12N} e^{CT} \|\Gamma\|_{L^1_T(B^{3/2}_{2,1})},
\end{align}
where in the last line we have used the continuous embedding $B^{3/2}_{2,1}(\DD)\hookrightarrow L^\infty(\DD)$.
By choosing $N\in \mathbb{N}$ so that $C 2^{-\frac12N} e^{CT} \approx \frac{1}{2}$, one can infer that
\begin{align*}
  \|\Gamma\|_{L^1_T(B^{\frac32}_{2,1})} \leq C e^{CT}.
\end{align*}
Repeating the above process, we find that for every $r\in [1,\frac{4}{3})$,
\begin{align}\label{eq:Gam-B2a2inf-2}
  \|\Gamma \|_{\widetilde{L}^r_T(B^{\frac{2}{r}}_{2,\infty})}
  & =  \sup_{q\geq -1}  2^{\frac{2}{r} q} \|\Delta_q \Gamma\|_{L^r_T(L^2)} \nonumber \\
  &\leq C \|\Delta_{-1}\Gamma\|_{L^r_T(L^2)} + C\,\sup_{q\in \mathbb{N}} \Big(\|\Delta_q \Gamma_0\|_{L^2}
  + \|f_q\|_{L^1_T(L^2)}  \Big) \nonumber \\
  &\leq  C  T^{\frac{1}{r}}\|\Gamma\|_{L^\infty_T(L^2)}
   +  C  \|\Gamma_0\|_{L^2}
  + C \|\nabla u\|_{L^\infty_T(L^2)}
  \Big( \|\Gamma\|_{L^1_T(L^\infty)} + T\|\theta_0\|_{L^1\cap L^6}\Big) \nonumber \\
  &\leq Ce^{CT}.
\end{align}

Next, we want to prove that $\nabla u\in \widetilde{L}^1_T(C^{\frac{1}{2}})
= \widetilde{L}^1_T(B^{\frac{1}{2}}_{\infty,\infty})$ 
and $\nabla u \in L^r_T(L^\infty)$. 
Taking advantage of the identity \eqref{eq.v.GamThe0}, 
Bernstein's inequality, and estimates \eqref{es:nab-uL2-2}, \eqref{eq:Gam-B2a2inf-2}, we have
\begin{equation}\label{es:nab-uHold1-2}
\begin{split}
  \|\nabla u\|_{\widetilde{L}^1_T(C^{\frac{1}{2}})} & \leq \|\Delta_{-1} \nabla u\|_{L^1_T (L^\infty)}
  + \sup_{q\in \mathbb{N}} 2^{\frac{1}{2}q}\|\Delta_q \nabla u\|_{L^1_T(L^\infty)}  \\
  & \leq C \|\nabla u\|_{L^1_T(L^2)} + C \sup_{q\in \mathbb{N}} 2^{\frac{1}{2}q} \Big(\|\Delta_q \Gamma\|_{L^1_T(L^\infty)}
  + 2^{-q}\|\Delta_q \theta\|_{L^1_T(L^\infty)}\Big)  \\
  & \leq C \|\nabla u\|_{L^1_T(L^2)} + C \|\Gamma\|_{\widetilde{L}^1_T(B^{2}_{2,\infty})}
  + C \sup_{q\in \mathbb{N}} \|\Delta_q \theta\|_{L^1_T(L^6)} \leq C e^{CT}.
\end{split}
\end{equation}
Similarly, using the continuous embedding $B^{\frac{2}{r}}_{2,\infty}(\mathbb{R}^3)\hookrightarrow B^0_{\infty,1}(\mathbb{R}^3)$ for every $r\in[1,\frac{4}{3})$, we infer that
\begin{align*}
  \|\nabla u\|_{L_T^r (L^\infty)} %& \leq C\|\nabla u\|_{L_T^r(B^0_{\infty,1})} \\
  &\leq C\|\Delta_{-1}\nabla u\|_{L_T^r(L^2)} + \sum\limits_{q\in\mathbb{N}}\|\Delta_q\nabla u\|_{L_T^r(L^\infty)} \\
  &\leq CT^{\frac{1}{r}}\|\nabla u\|_{L_T^\infty (L^2)} + C \sum\limits_{q\in\mathbb{N}}\|\Delta_q\Gamma\|_{L_T^r(L^\infty)} 
  + C \sum\limits_{q\in\mathbb{N}}2^{-q}\|\Delta_q\theta\|_{L_T^r(L^\infty)} \\
  &\leq C T^{\frac{1}{r}}\|\nabla u\|_{L_T^\infty (L^2)} + C \|\Gamma\|_{\widetilde{L}_T^r(B^0_{\infty,1})} 
  + C \sum_{q\in \mathbb{N}} 2^{-\frac{1}{2}q} \|\Delta_q \theta\|_{L^r_T(L^6)} \\
  &\leq CT^{\frac{1}{r}}\|\nabla u\|_{L_T^\infty (L^2)} + C\|\Gamma\|_{\widetilde{L}_T^r(B^{\frac{2}{r}}_{2,\infty})}
  + C T^{\frac{1}{r}} \|\theta\|_{L_T^\infty(L^6)} \leq Ce^{CT}.
\end{align*}
Therefore, gathering the above estimates completes the proof of \eqref{eq:u-Gam-es2}.
\end{proof}

\begin{proposition}\label{prop:ap-es2}
Suppose that $(u,\theta)$ is the smooth solution for the 3D Boussinesq system \eqref{eq:BousEq} satisfying
\begin{itemize}
\item $u_0\in H^1\cap\dot{W}^{1,p}(\DD)$, $2<p<\infty$,
\item $\nabla\cdot u_0=0$,
\item $\theta_0\in L^1\cap L^{\infty}(\DD)$.
\end{itemize}
Let $\PPr \in [\Pr_*,\infty)$, with $\mathrm{Pr}_*>0$ given by \eqref{eq:data-cond}.
Then, for every $T>0$, there exists a constant $C>0$ 
depending on the norms of $(u_0,\theta_0)$ but independent of $\PPr$ 
such that the following statements hold.
\begin{enumerate}[(1)]
\item For every $1\leq r<\infty$, we have
\begin{align}\label{es:nabu-Gam2}
  \|(\nabla u,\Gamma)\|_{L^\infty_T(L^p)} 
  + \|\Gamma\|_{\widetilde{L}^r_T(B^{\frac{2}{r}}_{p,\infty})} \leq C e^{\exp\{CT\}}.
\end{align}
\item 
If $\frac{3}{2-\gamma}<p<\infty$, $\gamma\in[\frac12,1)$, 
then for every $r\in [1,\frac{2p}{3+\gamma p})$, $s\in[1,\frac{2p}{3})$, we have 
\begin{align}\label{es:nabu-Gam4}
  \|\Gamma\|_{L^r_T (C^{\gamma})}  +  \|\nabla u\|_{L^r_T(C^{\gamma})} 
  + \| u\|_{L^s_T(W^{1,\infty})}  \leq C e^{\exp\{CT\}}.
\end{align}
\item If $3<p<\infty$, then for every $s\in[1,\frac{2p}{p+3})$, we have
\begin{align}\label{es:nabu-Gam3}
  \|\Gamma\|_{L^s_T (B^{1}_{\infty,1})}
  +  \|\nabla u\|_{L^s_T(B^{1}_{\infty,\infty})} + \| u\|_{L^2_T(W^{1,\infty})} 
  \leq C e^{\exp\{CT\}}.
\end{align}
\end{enumerate}
\end{proposition}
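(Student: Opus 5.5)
The plan mirrors the proof of Proposition~\ref{prop:ap-es1}, replacing $L^2$ by $L^p$ throughout. Every estimate must close with constants independent of $\PPr\ge\PPr_*$. We exploit that the right-hand side of \eqref{eq.Gamm} carries a factor $\frac1{\PPr}$, so after multiplying by $\PPr$ the dissipation $-\PPr\Delta\Gamma$ dominates.

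\emph{Step 1: $L^\infty_T(L^p)$ bound.} We multiply \eqref{eq.Gamm} by $\PPr|\Gamma|^{p-2}\Gamma$ and use $\mathrm{div}\,u=0$ together with the generalized Poincar\'e/Sobolev inequality $\int -\Delta\Gamma\cdot|\Gamma|^{p-2}\Gamma\gtrsim\||\Gamma|^{p/2}\|_{\dot H^1}^2$. This gives
\[
\tfrac{\dd}{\dd t}\|\Gamma\|_{L^p}\le \|\Omega\cdot\nabla u\|_{L^p}+\|[\mathcal R_{-1},u\cdot\nabla]\theta\|_{L^p}.
\]
We split $\Omega=\Gamma+\mathcal R_{-1}\theta$. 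Since $\theta\in L^1\cap L^\infty$, we have $\mathcal R_{-1}\theta\in L^\infty$, and therefore
\[
\|\Omega\cdot\nabla u\|_{L^p}\le (\|\Gamma\|_{L^\infty}+C)\|\nabla u\|_{L^p}.
\]
For the commutator we write it as $\mathcal R_{-1}\mathrm{div}(u\theta)-u\cdot\nabla\mathcal R_{-1}\theta$. This is bounded in $L^p$ by $C\|u\|_{L^\infty}\|\theta\|_{L^1\cap L^\infty}$, and $u\in L^2_T(L^\infty)$ uniformly in $\PPr$ by Section~\ref{sec:exi-uni}; the $\PPr$-dependence there must be checked, and \eqref{es:nab-uL2-2} plus the $L^{3,\infty}$ bound can be used instead to get $u\in L^\infty_T(L^6)$ and $\theta$ in $L^{6p/(6-p)}$-type spaces. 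Biot--Savart gives $\|\nabla u\|_{L^p}\le C(\|\Gamma\|_{L^p}+\|\theta\|_{L^1\cap L^\infty})$. Grönwall then yields
\[
\|\Gamma\|_{L^\infty_T(L^p)}\le C\exp\big(C\|\Gamma\|_{L^1_T(L^\infty)}+CT\big).
\]
By Proposition~\ref{prop:ap-es1}, $\|\Gamma\|_{L^1_T(L^\infty)}\lesssim\|\Gamma\|_{L^1_T(B^{3/2}_{2,1})}\le Ce^{CT}$, which produces the double exponential.

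\emph{Step 2: smoothing in $\widetilde L^r_T(B^{2/r}_{p,\infty})$.} We localize as in \eqref{eq:Delta-q-Gamm} and use the $L^p$ heat-type decay for $\Delta_q$-blocks, which is valid for divergence-free transport by the standard $L^p$ energy argument on $\Delta_q\Gamma$. This gives
\[
\|\Delta_q\Gamma\|_{L^r_T(L^p)}\lesssim(\PPr)^{-1/r}2^{-2q/r}\big(\|\Delta_q\Gamma_0\|_{L^p}+\|f_q\|_{L^1_T(L^p)}\big).
\]
Here Lemma~\ref{lem:HKR} controls $\|[\Delta_q,u\cdot\nabla]\Gamma\|_{L^p}\le C\|\nabla u\|_{L^p}\|\Gamma\|_{L^\infty}$, and the other terms in $f_q$ are handled as in Step 1. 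We need $\Gamma_0\in L^p$, which holds because $u_0\in\dot W^{1,p}$ and $\theta_0\in L^1\cap L^\infty$. This yields \eqref{es:nabu-Gam2} for all $r<\infty$, since the bound $\sup_q$ costs nothing; the $q=-1$ block is bounded by $T^{1/r}\|\Gamma\|_{L^\infty_T L^p}$.

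\emph{Step 3: Hölder and Lipschitz bounds.} These follow by Besov embeddings and \eqref{eq.v.GamThe0}. For (2), $B^{2/r}_{p,\infty}\hookrightarrow B^{2/r-3/p}_{\infty,\infty}\hookrightarrow C^\gamma$ requires $\frac2r-\frac3p>\gamma$, i.e.\ $r<\frac{2p}{3+\gamma p}$; we need $p>\frac3{2-\gamma}$ for this range to contain $r\ge1$. The $\theta$-parts $\nabla^2\partial_3\Lambda^{-4}\theta$ and $\Lambda^{-2}\nabla\theta$ lie in $L^\infty_T(C^\gamma)$ because $\theta\in L^1\cap L^\infty$, with the low frequencies treated by $L^2$ as in \eqref{es:nab-uHold1-2}. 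For $\|u\|_{L^s_T(W^{1,\infty})}$ we use $B^{2/s}_{p,\infty}\hookrightarrow B^0_{\infty,1}$ when $\frac2s>\frac3p$, and $\|u\|_{L^\infty}$ is controlled by $\|u\|_{L^6}$ and $\|\nabla u\|_{L^p}$. For (3), with $p>3$ and $s<\frac{2p}{p+3}$ we get $\frac2s-\frac3p>1$, hence $\Gamma\in L^s_T(B^1_{\infty,1})$. The $\theta$-part of $\nabla u$ lies in $B^1_{\infty,\infty}$, since $\nabla^2\Lambda^{-2}$ of an $L^\infty$ function is in $B^0_{\infty,\infty}$. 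Finally, $u\in L^2_T(W^{1,\infty})$ follows from the case $s=2$, which is admissible precisely when $p>3$.

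\emph{Main obstacle.} The delicate point is Step 1: closing the $L^p$ estimate uniformly in $\PPr$ without a factor $\PPr$ appearing in the forcing. This works only because the $\theta$-forcing has been absorbed into $\Gamma$ and the remainder carries $\frac1\PPr$. If $u\in L^2_T(L^\infty)$ from Section~\ref{sec:exi-uni} is not $\PPr$-uniform, I would instead bound the commutator via \eqref{eq:R-1cm-es1}-type estimates using $\|\nabla u\|_{L^p}$ and $\|u\|_{L^6}$, which are $\PPr$-uniform by Proposition~\ref{prop:ap-es1}.
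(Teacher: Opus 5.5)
Your proposal follows the paper's proof essentially step for step: an $L^p$ estimate for $\Gamma$ closed by Gr\"onwall with weights supplied by Proposition~\ref{prop:ap-es1}, localized $L^p$ parabolic smoothing of $\Delta_q\Gamma$ with Lemma~\ref{lem:HKR}, and then Besov embeddings combined with the splitting \eqref{eq.v.GamThe0}; the only real variant is that you estimate $\Omega\cdot\nabla u$ with $\Omega\in L^\infty$ and $\nabla u\in L^p$ instead of the reverse, which closes equally well. Two corrections: the fallback in your last paragraph is in fact necessary, because the $L^2_T(L^\infty)$ bound of Section~\ref{sec:exi-uni} contains a factor $T\PPr$, and that fallback is exactly what the paper does (\eqref{eq:R-1cm-es1} with $s=\frac12$, $q=6$); also, in part (2) with $\frac{3}{2-\gamma}<p\le 3$, $\|u\|_{L^\infty}$ cannot be controlled by $\|u\|_{L^6}$ and $\|\nabla u\|_{L^p}$, so bound it by $\|u\|_{L^6}$ at low frequencies and by $\nabla u$ in $B^0_{\infty,1}$ at high frequencies, as the paper does.
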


\begin{comment}
\begin{remark}\label{rem:W2infty}
Under condition (2) of Theorem~\ref{thm:exi-reg}, we can prove that the $W^{2,\infty}$ 
regularity of the temperature patch is preserved throughout the evolution and is independent of~$\mathrm{Pr}$. 
Indeed, thanks to Lemma~\ref{lem:flow}, we only need to prove $u\in L_T^1(W^{2,\infty})$, 
one can get such result by combining the conclusion of Proposition~\ref{prop:ap-es2} 
and \cite[Section 4.2]{LLX24}. Moreover, one can get $u\in L_T^r(W^{2,\infty})$ for $r\in[1,\frac{2p}{p+3})$.
\end{remark}
\end{comment}

\begin{proof}[Proof of Proposition \ref{prop:ap-es2}]
%First, we localise in frequencies the equation for $\Gamma$ to get
Taking the inner product of both sides of the $\Gamma$-equation \eqref{eq.Gamm} 
with $|\Gamma|^{p-2}\Gamma(x,t)$, one can get
\begin{align}\label{eq:Gam-Lp-es1}
  \frac{1}{p} \frac{\dd}{\dd t} \|\Gamma(t)\|_{L^p}^p - \PPr \int_{\DD} \Delta \Gamma\cdot |\Gamma|^{p-2}
  \Gamma(x,t) \dd x \leq \Big(\|[\mathcal{R}_{-1}, u\cdot\nabla]\theta \|_{L^p} 
  + \|\Omega\cdot\nabla u\|_{L^p} \Big) \|\Gamma(t)\|_{L^p}^{p-1}.
\end{align}
Integration by parts ensures that the term in \eqref{eq:Gam-Lp-es1} 
coming from dissipation $-\Delta\Gamma$ is nonnegative, thus we have
\begin{align*}
  \frac{\dd }{\dd t}\|\Gamma(t)\|_{L^p}  \leq \|[\mathcal{R}_{-1},u\cdot\nabla]\theta(t)\|_{L^p} 
  + \|\Omega\cdot\nabla u(t)\|_{L^p}.
\end{align*}
Applying Lemma \ref{lem:Rbeta-cm}, the Calder\'on-Zygmund theorem and the continuous embeddings 
$L^\infty(\mathbb{R}^3)\hookrightarrow B^{-1/2}_{\infty,1}(\mathbb{R}^3)$ and $\dot{H}^1(\R^3)\hookrightarrow L^6(\R^3)$, we find
\begin{align}\label{eq:comm-Lp}
  \|[\mathcal{R}_{-1},u\cdot\nabla]\theta(t)\|_{L^p} 
  & \leq C \|[\mathcal{R}_{-1},u\cdot\nabla]\theta(t)\|_{B^{1/2}_{p,1}} \nonumber \\
  & \leq C \|\nabla u(t)\|_{L^p} \big(\|\theta(t)\|_{B^{-1/2}_{\infty,1}}+\|\theta(t)\|_{L^{2}} \big)
  + C\|u(t)\|_{L^{6}}\|\theta(t)\|_{L^{6/5}} \nonumber \\
  & \leq C \|\Omega(t)\|_{L^p} \|\theta_0\|_{L^2\cap L^\infty} 
  + C \|\nabla u(t)\|_{L^2}\|\theta_0\|_{L^{6/5}}.
\end{align}
Combining the above two inequalities yields
\begin{align*}
  \frac{\dd }{\dd t}\|\Gamma(t)\|_{L^p}
  & \leq C \|\Omega(t)\|_{L^p} \big(\|\theta_0\|_{L^2\cap L^\infty} + \|\nabla u(t)\|_{L^\infty} \big)
  + C \|\nabla u(t)\|_{L^2}\|\theta_0\|_{L^{6/5}} \\
  & \leq C \big(\|\Gamma(t)\|_{L^p} + \|\mathcal{R}_{-1}\theta(t)\|_{L^p} \big) \big(\|\theta_0\|_{L^2\cap L^\infty} 
  + \|\nabla u(t)\|_{L^\infty}\big)  + C\|\nabla u(t)\|_{L^2}\|\theta_0\|_{L^{6/5}} \\
  & \leq C \big(\|\Gamma(t)\|_{L^p} + \|\theta_0\|_{L^1\cap L^\infty} \big) \big(\|\theta_0\|_{L^1\cap L^\infty} 
  + \|\nabla u(t)\|_{L^\infty}\big) 
  + C\|\nabla u(t)\|_{L^2}\|\theta_0\|_{L^1\cap L^\infty},
\end{align*}
where in the last line we have used the Hardy-Littlewood-Paley inequality
\begin{align}\label{eq:R-1-Lp}
  \|\mathcal{R}_{-1} \theta \|_{L^p(\mathbb{R}^3)} 
  \leq C\|\Lambda^{-1}\theta\|_{L^p(\mathbb{R}^3)} 
  \leq C \|\theta\|_{L^{\frac{3p}{p+3}}(\mathbb{R}^3)}.
\end{align}
Gr\"onwall's inequality, estimate \eqref{eq:u-Gam-es2} and the fact that 
\begin{align*}
  \|\Gamma_0\|_{L^p}\leq \|\Omega_0\|_{L^p} + \|\mathcal{R}_{-1}\theta_0\|_{L^p} 
  \leq C\|\nabla u_0\|_{L^p} + C \|\theta_0\|_{L^{\frac{3p}{p+3}}}\leq C
\end{align*}
imply that
\begin{align*}
  \|\Gamma\|_{L_T^\infty(L^p)} & \leq C \Big(\|\Gamma_0\|_{L^p}+\|\theta_0\|_{L^1\cap L^\infty}
  + \|\theta_0\|_{L^1\cap L^\infty}\|\nabla u\|_{L_T^1(L^2)}\Big) e^{\int_0^T \big(\|\nabla u\|_{L^\infty} 
  + \|\theta_0\|_{L^1\cap L^\infty}\big)\dd t} \\
  & \leq Ce^{\exp\{C T\}}.
\end{align*}
Consequently, using the identity $\Omega =\Gamma + \mathcal{R}_{-1}\theta$ 
and estimate \eqref{eq:R-1-Lp}, we obtain
\begin{align}\label{es.Gamma}
  \|(\Gamma,\nabla u)\|_{L^\infty_T(L^p)} \leq C \|(\Gamma,\Omega)\|_{L^\infty_T(L^p)}
  \leq C \|\Gamma\|_{L^\infty_T(L^p)} + \|\mathcal{R}_{-1}\theta\|_{L^\infty_T(L^p)} 
  \leq Ce^{\exp\{C T\}}.
\end{align}

For every $q\in \mathbb{N}$, by taking the $L^2$-inner product of equation \eqref{eq:Delta-q-Gamm} with 
$\left|\Delta_q\Gamma\right|^{p-2} \Delta_q\Gamma(x,t)$
and using the following positivity estimate (see \cite{CMZ07})
%integrating in the space variable we find
%$$
%\frac{1}{p} \frac{\mathrm{d}}{\mathrm{d} t}\left\|\Delta_q\Gamma(t)\right\|_{L^p}^p+\int_{\mathbb{R}^2}\left(\Lambda^{2 \alpha} \Delta_q\Gamma\right)\left|\Delta_q\Gamma\right|^{p-2} \Delta_q\Gamma \mathrm{~d} x \leq\left\|\Delta_q\Gamma(t)\right\|_{L^p}^{p-1}\left\|f_q(t)\right\|_{L^p}.
%$$
%From Lemma~\ref{lem:CMZ},
%we have the following inequality,
\begin{align*}
  \int_{\DD} \left(-\Delta \Delta_q\Gamma\right)\left|\Delta_q\Gamma\right|^{p-2} \Delta_q\Gamma(x)
  \mathrm{~d} x \geq c\, 2^{2 q} \left\|\Delta_q\Gamma\right\|_{L^p}^p,\quad \forall q\in \mathbb{N},
\end{align*}
with some universal constant $c>0$ independent of $q$, we have
\begin{align*}
  \frac{1}{p} \frac{\mathrm{d}}{\mathrm{d} t}\left\|\Delta_q\Gamma(t)\right\|_{L^p}^p +
  c\,\PPr\,  2^{2 q}\left\|\Delta_q\Gamma(t)\right\|_{L^p}^p
  \leq\left\|\Delta_q\Gamma(t)\right\|_{L^p}^{p-1}\left\|f_q(t)\right\|_{L^p},
\end{align*}
which immediately gives
\begin{align*}
  \left\|\Delta_q\Gamma(t)\right\|_{L^p} \leq e^{-c\,\PPr\, t 2^{2  q}} \left\|\Delta_q\Gamma_0\right\|_{L^p}
  + \int_0^t e^{- c \,\PPr\, (t-\tau) 2^{2q}}\left\|f_q(\tau)\right\|_{L^p} \mathrm{d} \tau,
\end{align*}
where 
$f_q = - \left[\Delta_q, u \cdot \nabla\right] \Gamma +
  \Delta_q\big(\left[\mathcal{R}_{-1}, u \cdot \nabla\right] \theta+ \Omega\cdot\nabla u\big)$.
Taking the $L^r([0,T])$ $(r\in[1,\infty))$-norm in the above inequality, and using \eqref{eq:u-Gam-es2}, \eqref{eq:comm-Lp}, \eqref{es.Gamma}, 
Lemma~\ref{lem:HKR}, we find that for all $q\in \mathbb{N}$,
\begin{align*}
  & 2^{q\frac{2}{r} }\left\|\Delta_q\Gamma\right\|_{L_T^r (L^p)}\\
  & \leq  \frac{C}{\PPr^{1/r}}  \left\|\Delta_q\Gamma_0\right\|_{L^p} 
  + \frac{C}{\PPr^{1/r}} \left( \left\|\left[\Delta_q, u \cdot \nabla\right]
  \Gamma \right\|_{L_T^1(L^p)}+\|\Delta_q(\Omega\cdot\nabla u)\|_{L_T^1(L^p)}
  + \left\|\Delta_q \left[\mathcal{R}_{-1}, u \cdot \nabla\right] \theta
  \right\|_{L_T^1(L^p)}\right) \\
%  & \leq C \left\|\Gamma_0\right\|_{L^p}+ C \int_0^T \| \nabla u(\tau)\|_{L^p} \|\Gamma(\tau)\|_{B^0_{\infty,\infty}}
%  \mathrm{d} \tau + C \int_0^T \left\|\left[\mathcal{R}_{2 \alpha}, u \cdot \nabla\right]
%  \theta(\tau)\right\|_{B^0_{p,\infty}} \mathrm{d} \tau \\
  &\leq C \left\|\Gamma_0\right\|_{L^p}  + C \|\nabla u\|_{L^\infty_T(L^p)} \left(\|\Gamma\|_{L^1_T(B^0_{\infty,\infty})}
  +\|\theta\|_{L^1_T(L^2\cap L^\infty)} + \|\nabla u\|_{L_t^1L^\infty} \right)\\
  & \quad + C\|\nabla u\|_{L_T^\infty (L^2)} \|\theta\|_{L_T^1 (L^{6/5})}
  \leq  Ce^{\exp\{C T\}},
\end{align*}
where in the last line we have used the embedding $B^{\frac{3}{2}}_{2,1}(\mathbb{R}^3)\hookrightarrow B^0_{\infty,\infty}(\mathbb{R}^3)$. 
%(owing to the Hardy-Littlewood-Sobolev inequality)
%\begin{align*}
%  \left\|\mathcal{R}_{1-2 \alpha} \theta_0\right\|_{L^p} \leq C \|\Lambda^{1-2\alpha} \theta_0\|_{L^p}
%  \leq C \| \theta_0\|_{L^{\frac{2p}{(2\alpha-1)p+2}}} \leq C \|\theta_0\|_{L^1\cap L^\infty}.
%\end{align*}
Hence, we have
\begin{align}\label{eq:Gam-LrBesov}
  \|\Gamma\|_{\widetilde{L}^r_T(B^{\frac{2}{r}}_{p,\infty})} \leq C \|\Delta_{-1}\Gamma\|_{L^r_T (L^p)}
  + \sup_{q\in \mathbb{N}} 2^{q\frac{2}{r}}  \|\Delta_q \Gamma\|_{L^r_T(L^p)}  \leq Ce^{\exp\{C T\}}.
\end{align}

Now we turn to the proof of \eqref{es:nabu-Gam4}. 
If $p>\frac{3}{2-\gamma}$, together with the continuous embedding 
$\widetilde{L}_T^{r} (B^{2/r}_{p,\infty})\hookrightarrow L_T^r (B^\gamma_{\infty,\infty})$ for $r\in [1,\frac{2p}{3+\gamma p})$, we see that
\begin{equation*}%\label{eq:nab-u-C2alp-2}
\begin{split}
  \|(\nabla u,\Gamma) \|_{L^r_T(C^\gamma)} 
  & \leq C \|\Delta_{-1}\nabla u\|_{L^r_T(L^\infty)} +
  C \Big\| \sup_{q\in \mathbb{N}}2^{q\gamma} \|\Delta_q \nabla u\|_{L^\infty} \Big\|_{L^r_T} + \|\Gamma\|_{L^r_T(C^\gamma)} \\
  & \leq C T^{1/r}\|\nabla u\|_{L^\infty_T(L^2)} + C \|\Gamma\|_{L^r_T( B^{\gamma}_{\infty,\infty})}
  + C \|\theta\|_{L^r_T(B^{\gamma-1}_{\infty,\infty})} \\
  & \leq C T^{1/r}\|\nabla u\|_{L^\infty_T(L^2)} 
  + C \|\Gamma\|_{\widetilde{L}_T^r(B^{2/r}_{p,\infty})}
  + C \|\theta\|_{L^r_T(L^\infty)} 
  \leq Ce^{\exp\{C T\}}.
\end{split}
\end{equation*}
From the high-low frequency decomposition and \eqref{eq.v.GamThe0}, 
we also infer that for every $s\in[1,\frac{2p}{3})$,
\begin{equation*}
\begin{aligned}
  \|u\|_{L_T^s(W^{1,\infty})} & \leq C\| u\|_{L_T^s(B^1_{\infty,1})}
  \leq C\|\Delta_{-1} u\|_{L_T^s(L^6)}
  + C \sum\limits_{q\in\mathbb{N}}\|\Delta_q \nabla u\|_{L_T^s(L^\infty)} \\
  &\leq C\|\nabla u\|_{L_T^s(L^2)} 
  + C \sum\limits_{q\in\mathbb{N}}\|\Delta_{q}\Gamma\|_{L_T^s(L^\infty)} 
  + C \sum\limits_{q\in\mathbb{N}}2^{-q}\|\Delta_{q}\theta\|_{L_T^s(L^\infty)} \\
  &\leq C T^{\frac{1}{s}}\|\nabla u\|_{L_T^\infty (L^2)} 
  + C  \|\theta\|_{L^s_T(L^\infty)} 
  + C\|\Gamma\|_{\widetilde{L}_T^s(B^0_{\infty,1})} \\
  &\leq C T^{\frac1s}\|\nabla u\|_{L_T^\infty (L^2)} 
  + C T^{\frac{1}{s}}\|\theta_0\|_{L^\infty} 
  + C \|\Gamma\|_{\widetilde{L}_T^s(B^{2/s}_{p,\infty})} 
  \leq Ce^{\exp\{C T\}}.
\end{aligned}
\end{equation*}

Next we prove \eqref{es:nabu-Gam3}. If $p>3$, using the continuous embedding 
$B^{\frac{2}{s}}_{p,\infty}(\mathbb{R}^3) \hookrightarrow 
B^{\frac{2}{s}-\frac{3}{p}}_{\infty,\infty}(\mathbb{R}^3) 
\hookrightarrow B^1_{\infty,1}(\mathbb{R}^3)$ 
for $s\in[1,\frac{2p}{p+3})$, the above inequality \eqref{eq:Gam-LrBesov} yields 
\begin{align*}
  \|\Gamma\|_{L^s_T(B^{1}_{\infty,1})} \leq 
  \|\Gamma\|_{\widetilde{L}^s_T(B^1_{\infty,1})} \leq Ce^{\exp\{C T\}} .
\end{align*}
%which together with the continuous embedding $B^{2\alpha}_{p,\infty} \hookrightarrow B^{2\alpha-\frac{2}{p}}_{\infty,\infty}
%\hookrightarrow B^1_{\infty,1}$ (for all $p>\frac{2}{2\alpha-1}$ and $\frac{1}{2}<\alpha<1$)
%leads to the wanted result of $\Gamma$.
As for the estimates of $ u$ in \eqref{es:nabu-Gam3}, 
we also use the splitting \eqref{eq.v.GamThe0} and estimate \eqref{eq:Gam-LrBesov} 
to see that for every $s\in[1,\frac{2p}{p+3})$,
\begin{equation*}%\label{eq:nab-u-C2alp-2}
\begin{split}
  \|\nabla u\|_{L^s_T(B^{1}_{\infty,\infty})} 
  & \leq C \|\Delta_{-1}\nabla u\|_{L^s_T(L^\infty)} +
  C \Big\| \sup_{q\in \mathbb{N}}2^{q} \|\Delta_q \nabla u\|_{L^\infty} \Big\|_{L^s_T} \\
  & \leq C T^{\frac{1}{s}}\|\nabla u\|_{L^\infty_T(L^2)} + C \|\Gamma\|_{L^s_T( B^{1}_{\infty,\infty})}
  + C \|\theta\|_{L^s_T(B^0_{\infty,\infty})}
  \leq Ce^{\exp\{C T\}},
\end{split}
\end{equation*}
and
\begin{equation*}
\begin{aligned}
  \| u\|_{L_T^2(W^{1,\infty})} \leq C \|u\|_{L_T^2(B^1_{\infty,1})} 
  & \leq C\|\Delta_{-1} u\|_{L_T^2(L^6)} 
  + C \sum\limits_{q\in\mathbb{N}}\|\Delta_{q}\nabla u\|_{L_T^2(L^\infty)}\\
  &\leq C\|\nabla u\|_{L_T^2(L^2)} 
  + C \sum\limits_{q\in\mathbb{N}}\|\Delta_{q}\Gamma\|_{L_T^2(L^\infty)} 
  + C \sum\limits_{q\in\mathbb{N}}2^{-q}\|\Delta_{q}\theta\|_{L_T^2(L^\infty)}\\
%  &\leq C\|\nabla u\|_{L_T^2(L^2)} + CT^{\frac{1}{2}}\|\theta_0\|_{L^\infty}
%  + C \|\Gamma\|_{\widetilde{L}_T^2(B^0_{\infty,1})}\\
  &\leq C\|\nabla u\|_{L_T^2(L^2)} + C T^{\frac{1}{2}}\|\theta_0\|_{L^\infty} 
  + C \|\Gamma\|_{\widetilde{L}_T^2(B^1_{p,\infty})}\leq C e^{\exp\{C T\}},
\end{aligned}
\end{equation*}
where in the last line we have used the embedding 
$B^1_{p,\infty}(\mathbb{R}^3)\hookrightarrow B^0_{\infty,1}(\mathbb{R}^3)$ with $p>3$.

Hence, collecting the above estimates completes the proof of Proposition \ref{prop:ap-es2}.
\end{proof}
%%%%%%%%%%%%%%%%%%%%%%%%%%%%%%%%%%%

%%%%%%%%%%%%%%%%%%%%%%%%%%%%%%%%%%%%%%%%%%%
%\section{Propagation of higher regularity}\label{sec:C2gamma}

\subsection{Persistence of the curvature}\label{sec:W2}
Note that by applying Proposition \ref{prop:ap-es2} and some novel estimates in 
\cite{GSR95}, we can argue as \cite[Section 4.2]{LLX24} to show that 
$u\in L_T^r(W^{2,\infty})$ for $r\in[1,\frac{2p}{p+3})$ uniformly in 
$\PPr\in [\PPr_*,\infty)$, which combined with Lemma~\ref{lem:flow} 
implies the desired uniform-in-$\PPr$ $W^{2,\infty}$-regularity persistence result of the temperature patch boundary
$\partial D(t)$.

Here, we develop a new and elementary approach (partially inspired by \cite{Ra22}, 
and more adaptable to the general $W^{k,\infty}$-regularity case with $k\geq 2$ \cite{LXY26}) 
to directly show that the curvature of the temperature patch boundary 
$\partial D(t)$ globally persists along the evolution and the result is independent of $\PPr\in [\PPr_*,\infty)$. 
In fact, in view of Lemma \ref{lem:sr-cond}, we will prove the following result
\begin{align}\label{eq:targ7}
  \mathcal{W} \in L^\infty(0,T; W^{1,\infty}(\R^3)),\quad \textrm{uniformly in}\;\;\PPr\in [\PPr_*,\infty), 
\end{align}
where the admissible conormal vector class $\mathcal{W}=\{W^i\}_{1\leq i\leq 5}$ 
verifies \eqref{eq:Wi}.

The main result of this section is as follows.
\begin{proposition}\label{prop:W2}
Suppose that
\begin{itemize}
\item $u_0\in H^1\cap\dot{W}^{1,p}(\DD)$ for some $3<p<\infty$;
\item $\nabla\cdot u_0=0$;
\item $\theta_0(x) = \overline{\theta}_0(x) \mathbf{1}_{D_0}(x)$,
$\overline{\theta}_0 \in C^{\mu}(\overline{D_0})$ for some $\mu\in(0,1)$,
where $D_0\subset \DD$ is a bounded simply connected domain with boundary $\partial D_0\in W^{2,\infty}$.
\end{itemize}
Let $\PPr \in [\Pr_*,\,\infty)$, with $\mathrm{Pr}_*>0$ given by \eqref{eq:data-cond}. 
Then the 3D Boussinesq system \eqref{eq:BousEq} admits a unique global solution $(u,\theta)$ that satisfies \eqref{eq:the-patch-exp}
and
\begin{equation}\label{eq:reg-parD(t)-1}
  \partial D(t) \in L^{\infty}\left(0, T ; W^{2,\infty}(\DD)\right),
  \quad \textrm{uniformly in $\PPr\in [\PPr_*,\infty)$},
\end{equation}
with $D(t)=X_t(D_0)$ and $X_t:\mathbb{R}^3\mapsto \mathbb{R}^3$ the flow map given by \eqref{eq:flowmap}.
\end{proposition}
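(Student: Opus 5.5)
By Lemma~\ref{lem:sr-cond} with $k=2$, together with the existence part of Theorem~\ref{thm:exi-reg} and the patch representation \eqref{eq:the-patch-exp} (which follows from $\nabla u\in L^1_T(L^\infty)$ via Lemma~\ref{lem:flow}), it suffices to prove \eqref{eq:targ7}. The plan has four steps.

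\textbf{Step 1: $\WW$ in $L^\infty$.} We start from \eqref{eq:Wi}. The maximum principle gives
\[
\|\WW(t)\|_{L^\infty}\le \|\WW_0\|_{L^\infty}\,e^{\int_0^t\|\nabla u\|_{L^\infty}},
\]
which is bounded uniformly in $\PPr$ by \eqref{es:nabu-Gam3}.

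\textbf{Step 2: reduction to $\partial_\WW\nabla u$.} Applying $\nabla$ to \eqref{eq:Wi} gives
\[
\partial_t\nabla W^i+u\cdot\nabla\nabla W^i=-\nabla u\cdot\nabla W^i+\nabla(\partial_{W^i}u).
\]
We write $\nabla(\partial_{W}u)=\partial_W\nabla u+\nabla W\cdot\nabla u$. The maximum principle then yields
\[
\|\nabla\WW(t)\|_{L^\infty}\le\|\nabla\WW_0\|_{L^\infty}+C\int_0^t\|\nabla u\|_{L^\infty}\|\nabla\WW\|_{L^\infty}\,d\tau+\int_0^t\|\partial_\WW\nabla u\|_{L^\infty}\,d\tau.
\]
So the core task is to bound $\|\partial_\WW\nabla u\|_{L^\infty}$ by $C(1+\|\nabla u\|_{L^\infty})(1+\|\nabla\WW\|_{L^\infty})$ plus an $L^1_T$-integrable term.

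\textbf{Step 3: splitting via \eqref{eq.v.GamThe0}.} We bound each piece of the decomposition separately.

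For the $\Gamma$ part we use $|\partial_\WW\Lambda^{-2}\nabla\nabla\wedge\Gamma|\le\|\WW\|_{L^\infty}\|\nabla^2\Lambda^{-2}\nabla\Gamma\|_{L^\infty}$. This is controlled by $\|\WW\|_{L^\infty}\|\Gamma\|_{B^1_{\infty,1}}$, which lies in $L^1_T$ uniformly in $\PPr$ by \eqref{es:nabu-Gam3}, since $p>3$.

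For the $\theta$ parts $m(D)\theta$, with $m(D)=\nabla^3\Lambda^{-4}$ or $\nabla\Lambda^{-2}$ (order $-1$), we use the commutator splitting \eqref{eq:fact-commutator}. The term $[m(D),\WW\cdot\nabla]\theta$ is bounded by \eqref{es:prod-es6}, with $\WW$ in place of $u$ (it is divergence-free) and $\sigma=-1$; this gives $C\|\nabla\WW\|_{L^\infty}\|\theta_0\|_{L^1\cap L^\infty}$, which is linear in $\nabla\WW$ and suits Gr\"onwall. The term $m(D)\partial_\WW\theta$ is handled as follows. Since $\partial_\WW$ commutes with $\partial_t+u\cdot\nabla$ (by \eqref{eq:Wi}), $\partial_\WW\theta$ is transported. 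Lemma~\ref{lem:str-reg} with $k=2$ gives $\partial_{\WW_0}\theta_0\in C^{\mu-1}$. Then \eqref{eq:T-sm3} with $s=\mu-1\in(-1,0)$ gives
\[
\|\partial_\WW\theta\|_{L^\infty_T(C^{\mu-1})}\le Ce^{C\int_0^T\|\nabla u\|_{L^\infty}}.
\]
For the high frequencies, $m(D)$ maps $C^{\mu-1}$ to $C^\mu\subset L^\infty$. For the low frequencies we write $\partial_\WW\theta=\mathrm{div}(\WW\theta)$ and use $\|\WW\|_{L^\infty}\|\theta\|_{L^1\cap L^\infty}$. Together these bound $m(D)\partial_\WW\theta$ in $L^\infty_T(L^\infty)$ uniformly in $\PPr$.

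\textbf{Step 4: closing.} Collecting the bounds gives
\[
\|\nabla\WW(t)\|_{L^\infty}\le C+C\int_0^t(1+\|\nabla u\|_{L^\infty}+\|\Gamma\|_{B^1_{\infty,1}})\,(1+\|\nabla\WW\|_{L^\infty})\,d\tau.
\]
Gr\"onwall's inequality and Proposition~\ref{prop:ap-es2} then give \eqref{eq:targ7} uniformly in $\PPr\in[\PPr_*,\infty)$, and hence \eqref{eq:reg-parD(t)-1}.

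\textbf{Main obstacle.} The delicate step is the $\partial_\WW\theta$ term, which is only in a negative H\"older space. It requires checking that $m(D)$ of order $-1$ really gains $L^\infty$ control on negative-regularity data without a logarithmic loss; if one appeared it would be absorbed by $\mu>0$. It also requires checking that the low-frequency part is handled through the divergence form. A secondary check is that applying \eqref{es:prod-es6} with the divergence-free field $\WW$ (rather than $u$) is legitimate.
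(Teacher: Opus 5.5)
Your proposal is correct and follows essentially the same route as the paper. Both arguments use the maximum principle for $\WW$ and $\nabla\WW$, the splitting \eqref{eq.v.GamThe0}, the $L^1_T(B^1_{\infty,1})$ bound on $\Gamma$ from \eqref{es:nabu-Gam3}, and the commutator splitting \eqref{eq:fact-commutator} with \eqref{es:prod-es6} applied to the divergence-free field $\WW$. They also share the transported bound $\partial_\WW\theta\in L^\infty_T(C^{\mu-1})$ obtained from Lemma~\ref{lem:str-reg} and \eqref{eq:T-sm3}, and close with Gr\"onwall.

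The only cosmetic difference is in the low frequencies, which your sketch leaves implicit. The paper controls the low frequencies of the $\Gamma$-term through $\|\Gamma\|_{L^p}$. For $m(D)\partial_\WW\theta$ it uses the divergence form together with $\|\WW\|_{L^\infty}\|\theta\|_{L^2}$ and Bernstein, since $\Delta_{-1}m(D)\nabla$ is not bounded on $L^\infty$.
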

\begin{proof}[Proof of Proposition~\ref{prop:W2}]
The maximum principle of the equation \eqref{eq:Wi} gives
\begin{align}\label{eq:W-Linf-es}
  \|\WW(t)\|_{L^\infty} \leq \|\WW_0\|_{L^\infty} 
  + \int_0^t \|\nabla u(\tau)\|_{L^\infty} \|\WW(\tau)\|_{L^\infty} \dd \tau,
\end{align}
which together with \eqref{eq:u-Gam-es2} implies that
\begin{align}\label{eq:W-Linf-es-2}
  \|\WW(t)\|_{L^\infty} \leq \|\WW_0\|_{L^\infty} e^{\int_0^t \|\nabla u(\tau)\|_{L^\infty}\dd \tau}
  \leq C e^{\exp\{Ct\}}.
\end{align}

Applying the operator $\nabla$ to equation \eqref{eq:Wi} yields
\begin{align}\label{eq:nabW}
  \partial_t \nabla \WW+u \cdot \nabla(\nabla \WW)
  =\partial_\WW \nabla u+\nabla \WW \cdot \nabla u-\nabla u \cdot \nabla \WW,
  \quad \nabla \WW|_{t=0} = \nabla \WW_0,
\end{align}
where $\mathcal{W}=\{W^i\}_{1\leq i\leq 5}$ 
is the admissible class of divergence-free conormal vectors and 
$\partial_\WW = \{\partial_{W^i}\}_{1\leq i\leq 5}$,
$\WW\cdot \nabla = \{W^i\cdot \nabla\}_{1\leq i\leq 5}$.
Taking advantage of the maximum principle again, one can get
\begin{equation}\label{eq:nabW-Linf}
\begin{aligned}
  \|\nabla \WW(t)\|_{L^\infty} \leq & \left\|\nabla \WW_0\right\|_{L^\infty}
  + \int_0^t\left\|\partial_\WW \nabla u(\tau) \right\|_{L^\infty} \mathrm{d} \tau
  + 2 \int_0^t\|\nabla u(\tau)\|_{L^\infty}\|\nabla \WW(\tau)\|_{L^\infty} \mathrm{d} \tau .
\end{aligned}
\end{equation}
The main goal is to bound the $\partial_\WW \nabla u$ in $L^1_t(L^\infty)$. Using the identity \eqref{eq.v.GamThe0},
we find that
\begin{align}\label{eq:parWu-Linf}
  \|\partial_\WW \nabla u\|_{L^1_t(L^\infty)} 
  \leq \big\|\partial_\WW \nabla^2 \Lambda^{-2} \Gamma\big\|_{L^1_t(L^\infty)}
  + \big\|(\partial_\WW \nabla^3 \Lambda^{-4} \theta, \partial_\WW \nabla \Lambda^{-2} \theta) \big\|_{L^1_t(L^\infty)}.
\end{align}
For the first term, using \eqref{es:nabu-Gam2}, \eqref{es:nabu-Gam3} and \eqref{eq:W-Linf-es-2}, 
we obtain that for $3<p<\infty$,
\begin{align*}
  \|\partial_\WW \nabla^2 \Lambda^{-2} \Gamma\|_{L^1_t(L^\infty)} 
  & \leq C \|\WW\|_{L^\infty_t(L^\infty)} \|\nabla^3 \Lambda^{-2} \Gamma\|_{L^1_t(L^\infty)} \\
  & \leq C \|\WW\|_{L^\infty_t(L^\infty)} \big( \|\Delta_{-1}(\nabla^3 \Lambda^{-2} \Gamma)\|_{L^1_t(L^p)} + 
  \|\Gamma\|_{L^1_t(B^1_{\infty,1})} \big) \\
  & \leq C \|\WW\|_{L^\infty_t(L^\infty)} \|\Gamma\|_{L^1_t(L^p \cap B^1_{\infty,1})} \leq C e^{\exp\{C t\}}.
\end{align*}
By virtue of \eqref{es:prod-es6} in Lemma \ref{lem:str-es1} and the fact \eqref{eq:fact-commutator},
%$\partial_\WW \nabla^3\Lambda^{-4} \theta=\nabla^3\Lambda^{-4} \partial_\WW\theta - [\nabla^3\Lambda^{-4}, \WW\cdot\nabla]\theta$, 
one can infer that
\begin{align*}
  \big\|\partial_\WW \nabla^3\Lambda^{-4} \theta \big\|_{L^1_t(L^\infty)} 
  & \leq C \|\nabla^3\Lambda^{-4} \partial_\WW\theta \|_{L^1_t(B^\mu_{\infty, \infty})} 
  + \|[\nabla^3\Lambda^{-4}, \WW\cdot\nabla]\theta\|_{L^1_t(L^\infty)}\\
  & \leq C e^{\exp\{C t\}}
  + C \|\partial_\WW\theta\|_{L^1_t(C^{\mu-1})} 
  + C \int_0^t \|\nabla \WW(\tau)\|_{L^\infty} 
  \|\theta(\tau)\|_{L^1\cap L^\infty} \dd \tau,
\end{align*}
where in the last line we have used the following estimate
\begin{align*}
  \|\nabla^3\Lambda^{-4} \partial_\WW\theta\|_{L^1_t(B^\mu_{\infty, \infty})} 
  & \leq C \|\Delta_{-1}\nabla^3\Lambda^{-4} \text{div} (\WW\theta)\|_{L^1_t(L^\infty)} 
  + \sup\limits_{q\geq 0}2^{q\mu}\|\Delta_{q}\nabla^3\Lambda^{-4} \partial_\WW\theta\|_{L^1_t(L^\infty)} \\
  & \leq C\|\WW\|_{L^\infty_t(L^\infty)} \|\theta\|_{L^1_t(L^2)} 
  + C\sup\limits_{q\geq 0}2^{q(\mu-1)} 
  \|\Delta_{q} \partial_\WW\theta\|_{L^1_t(L^\infty)} \\
  & \leq C e^{\exp\{C t\}}
  + C\|\partial_\WW\theta\|_{L^1_t(C^{\mu-1})} .
\end{align*}
The estimation of $\big\|\partial_\WW \nabla \Lambda^{-2} \theta \big\|_{L^1_t(L^\infty)}$ 
is exactly the same as above.

Since $\theta$ solves the transport equation, we have
\begin{align*} 
  \|\theta(t)\|_{L^1\cap L^\infty}=\|\theta_0\|_{L^1\cap L^\infty}\leq C , 
  \quad \forall t\geq 0.
\end{align*}
Noting that $\partial_\WW\theta$ also solves the transport equation
\begin{align}\label{eq:par-W-the}
  \partial_t (\partial_\WW \theta) + u\cdot \nabla(\partial_\WW \theta) =0,\quad \partial_\WW \theta |_{t=0} = \partial_{\WW_0} \theta_0,
\end{align}
we apply \eqref{eq:T-sm3}, Lemma \ref{lem:str-reg} and \eqref{eq:u-Gam-es2} to deduce that for $\mu\in(0,1)$,
\begin{align}\label{eq:parWthe-Bes2}
  \|\partial_\WW \theta(t)\|_{C^{\mu-1}} \leq C  e^{C \int_0^t \|\nabla u(\tau)\|_{L^\infty}
  \dd \tau} \|\partial_{\WW_0}\theta_0\|_{C^{\mu-1}} \leq C  e^{\exp\{C t\}}.
\end{align}

Substituting these estimates into \eqref{eq:nabW-Linf}, we obtain
\begin{align*}
  \|\nabla \WW(t)\|_{L^\infty} 
  \leq C e^{\exp \{Ct\}} 
  + C \int_0^t \big(\|\nabla u(\tau)\|_{L^\infty} + 1\big)\|\nabla \WW(\tau)\|_{L^\infty} \mathrm{d} \tau .
\end{align*}
Gr\"onwall's inequality and \eqref{eq:u-Gam-es2}, \eqref{eq:W-Linf-es-2} imply that 
\begin{align}\label{eq:W-W1inf-bdd}
  \|\mathcal{W}\|_{L^\infty_T(W^{1,\infty})} \leq \|\mathcal{W}\|_{L^\infty_T(L^\infty)}  
  + \|\nabla \WW\|_{L^\infty_T( L^\infty)} \leq Ce^{\exp \{CT\}},
\end{align}
where $C>0$ is independent of $\PPr$.
Consequently, we conclude \eqref{eq:targ7} and the global persistence of 
$W^{2,\infty}$-boundary regularity of $\partial D(t)$ as in \eqref{eq:reg-parD(t)-1}.
\end{proof}

\subsection{Persistence of the \texorpdfstring{$C^{2,\gamma}$}{C(2,gamma)}-boundary regularity}\label{sec:C2gamma}
In this section, we focus on proving the global persistence of the $C^{2,\gamma}$-regularity of the temperature patch boundary
$\partial D(t)$ for every $\gamma\in(0,1)$. In particular, we show that this result holds uniformly with respect to $\PPr\in [\PPr_*,\infty)$.

By Lemma \ref{lem:sr-cond}, it is sufficient to show that \eqref{eq:targ-sr} holds for $k=2$.
In fact, we will prove a stronger result, namely,
\begin{align}\label{eq:targ5}
  \mathcal{W} \in L^\infty(0,T; C^{1,\gamma}(\R^3)),\quad \textrm{uniformly in}\;\;\PPr\in [\PPr_*,\infty),
\end{align}
where $\mathcal{W}=\{W^i\}_{1\leq i\leq 5}$ is the admissible conormal vector system that verifies \eqref{eq:Wi}.
The main result of this section is as follows.
\begin{proposition}\label{prop:C2gam}
Let $\gamma\in(0,1)$. Suppose that
\begin{itemize}
 \item $u_0\in H^1\cap W^{2,p}(\DD)$ for some $\frac{3}{2-\gamma}<p<\infty$;
\item $\nabla\cdot u_0=0$;
\item $\theta_0(x) = \overline{\theta}_0(x) \mathbf{1}_{D_0}(x)$, $\overline{\theta}_0 \in C^{\gamma}(\overline{D_0})$,
where $D_0\subset \DD$ is a bounded simply connected domain with boundary $\partial D_0\in C^{2,\gamma}$.
\end{itemize}
Let $\PPr \in [\Pr_*,\,\infty)$, with $\mathrm{Pr}_*>0$ given by \eqref{eq:data-cond}. 
Then there exists a unique global solution $(u,\theta)$
to the 3D Boussinesq system \eqref{eq:BousEq} that satisfies \eqref{eq:the-patch-exp}
and 
\begin{equation}\label{eq:reg-parD(t)-2}
  \partial D(t) \in L^{\infty}\left(0, T ; C^{2,\gamma}\right), \quad \textrm{uniformly in}\;\;\PPr\in [\PPr_*,\infty), 
\end{equation}
where $D(t)=X_t(D_0)$, $X_t:\mathbb{R}^3\mapsto \mathbb{R}^3$ is the flow map that solves \eqref{eq:flowmap}.
\end{proposition}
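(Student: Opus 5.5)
By Lemma~\ref{lem:sr-cond} with $k=2$, it suffices to establish \eqref{eq:targ5}, i.e. $\WW\in L^\infty_T(C^{1,\gamma})$ uniformly in $\PPr$. Since $\partial D_0\in C^{2,\gamma}$, Proposition~\ref{prop:5vec} gives $\WW_0\in C^{1,\gamma}$. Moreover $W^{2,p}\hookrightarrow \dot W^{1,p}$ with $p>\frac{3}{2-\gamma}$, so Propositions~\ref{prop:ap-es1} and~\ref{prop:ap-es2}(2) give $\nabla u\in L^1_T(C^\gamma)$ and $\WW\in L^\infty_T(L^\infty)$ uniformly in $\PPr$. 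The $C^\gamma$ bound for $\WW$ follows from \eqref{eq:T-sm2} applied to \eqref{eq:Wi}.

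For $\nabla\WW$ I would work with equation \eqref{eq:nabW}. I apply the transport estimate \eqref{eq:T-sm2} in $B^\gamma_{\infty,\infty}$ and the product law in $C^\gamma$ (which costs $\|\nabla u\|_{C^\gamma}\|\nabla\WW\|_{L^\infty}+\|\nabla u\|_{L^\infty}\|\nabla\WW\|_{C^\gamma}$). This gives
\[
\|\nabla\WW(t)\|_{C^\gamma}\le C\Big(\|\nabla\WW_0\|_{C^\gamma}+\|\partial_\WW\nabla u\|_{L^1_t(C^\gamma)}+\int_0^t\|\nabla u\|_{C^\gamma}\|\nabla \WW\|_{C^\gamma}\,\dd\tau\Big).
\]
The task then reduces to bounding $\|\partial_\WW\nabla u\|_{L^1_t(C^\gamma)}$ by $Ce^{\exp\{Ct\}}$ plus a term at most linear in $\int_0^t(1+\|\nabla u\|_{C^\gamma})\|\WW\|_{C^{1,\gamma}}$. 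Gr\"onwall then closes the argument.

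I split $\partial_\WW\nabla u$ using \eqref{eq.v.GamThe0}. For the $\theta$ terms, I apply \eqref{eq:prod-es4} with $\sigma=-1$ and $s=\gamma$, which gives
\[
\|\partial_\WW\nabla^3\Lambda^{-4}\theta\|_{C^\gamma}+\|\partial_\WW\nabla\Lambda^{-2}\theta\|_{C^\gamma}\lesssim \|\partial_\WW\theta\|_{C^{\gamma-1}}+\|\WW\|_{W^{1,\infty}}\|\theta\|_{C^{\gamma-1}},
\]
with low frequencies handled by the $L^1\cap L^\infty$ bounds of $\theta$ as in Section~\ref{sec:W2}. The quantity $\partial_\WW\theta$ is transported by \eqref{eq:par-W-the}, so \eqref{eq:T-sm3} together with Lemma~\ref{lem:str-reg} (with $k=2$, $\mu=\gamma$) gives $\partial_\WW\theta\in L^\infty_T(C^{\gamma-1})$. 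For the $\Gamma$ term, \eqref{eq:prod-es4} with $\sigma=0$ reduces the estimate to $\|\partial_\WW\Gamma\|_{L^1_t(B^\gamma_{\infty,1})}+\|\WW\|_{W^{1,\infty}}\|\Gamma\|_{L^1_t(C^\gamma)}$, and the second piece is controlled by \eqref{es:nabu-Gam4}.

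The main obstacle is the uniform-in-$\PPr$ bound on $\partial_\WW\Gamma$ in $L^1_t(B^\gamma_{\infty,1})$. Since $\partial_\WW$ commutes with $\partial_t+u\cdot\nabla$, applying it to \eqref{eq.Gamm} gives
\[
\partial_t\partial_\WW\Gamma+u\cdot\nabla\partial_\WW\Gamma-\PPr\Delta\partial_\WW\Gamma=\PPr[\partial_\WW,\Delta]\Gamma+\partial_\WW\big(\Omega\cdot\nabla u+[\mathcal{R}_{-1},u\cdot\nabla]\theta\big).
\]
The divergence-free structure lets me write $[\partial_\WW,\Delta]\Gamma$ as $\mathrm{div}$ of products like $\nabla\WW\,\nabla\Gamma$ plus $\partial_{\Delta \WW}\Gamma$. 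I measure it in $B^{-1}_{\infty,\infty}$ (or $B^{\gamma-1}$) via \eqref{eq:prodBes-endp}, paying $\|\nabla \WW\|_{B^0_{\infty,1}}\lesssim\|\WW\|_{C^{1,\gamma}}$ against $\|\nabla\Gamma\|_{B^{-1}}$. The factor $\PPr$ is absorbed because the smoothing gain in \eqref{eq:TD-sm5} with $\rho=1$ carries $\PPr^{-1}$ (viscosity $\PPr$).

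I would split $\partial_\WW\Gamma=F_1+F_2$. Here $F_2$ carries the initial datum $\partial_{\WW_0}\Gamma_0\in L^p\subset B^{-1}_{p,\infty}$ with zero forcing and is estimated by \eqref{eq:TD-sm5}; the requirement $p>\frac{3}{2-\gamma}$ is exactly what makes $B^{1}_{p,\infty}\hookrightarrow B^{\gamma'}_{\infty,1}$ possible after the $L^1_t$ gain. $F_1$ has zero data and is estimated by \eqref{TD-sm-es} with forcing in $L^1_t(B^{\gamma-1}_{\infty,\infty})$. In that forcing, $\partial_\WW(\Omega\cdot\nabla u)$ is handled by \eqref{eq:prodBes-endp}, and $\partial_\WW[\mathcal{R}_{-1},u\cdot\nabla]\theta$ by rewriting it through \eqref{eq:R-1cm-es1} and \eqref{eq:prod-es4} in terms of $\partial_\WW\theta$ and $\partial_\WW u$. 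The exponential factor $e^{C\int\|u\|_{B^1_{\infty,1}}}$ is controlled by \eqref{es:nabu-Gam4}. All right-hand sides are then linear in $\|\WW\|_{C^{1,\gamma}}$ with $L^1_t$-integrable coefficients, so Gr\"onwall gives \eqref{eq:targ5}.
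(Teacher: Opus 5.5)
Your architecture is the paper's: the $C^\gamma$ transport estimate for \eqref{eq:nabW}, the splitting \eqref{eq.v.GamThe0}, Lemma~\ref{lem:str-reg} plus transport for $\partial_\WW\theta$, and $\partial_\WW\Gamma=F^{(1)}+F^{(2)}$ with \eqref{eq:TD-sm5} absorbing the factor $\PPr$ in front of $[\partial_\WW,\Delta]\Gamma$. However, the $F^{(2)}$ step, which you single out as the place where $p>\frac{3}{2-\gamma}$ is used, fails as written.

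\textbf{The $F^{(2)}$ step.} You place $\partial_{\WW_0}\Gamma_0\in L^p$ in $B^{-1}_{p,\infty}$ and apply \eqref{eq:TD-sm5} with $\rho=1$. This gives only $F^{(2)}\in\widetilde L^1_t(B^1_{p,\infty})$. Now $B^1_{p,\infty}\hookrightarrow B^{1-3/p}_{\infty,\infty}$, and this space lies in $B^\gamma_{\infty,1}$ only if $p>\frac{3}{1-\gamma}$. For $p$ near $\frac{3}{2-\gamma}$ the exponent $1-\frac3p$ is close to $\gamma-1<0$. So the claimed bound of $\partial_\WW\Gamma$ in $L^1_t(B^\gamma_{\infty,1})$ fails on the range $\frac{3}{2-\gamma}<p\le\frac{3}{1-\gamma}$: you have thrown away one derivative of the datum.

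\textbf{The fix.} Keep the datum's true regularity and use \eqref{TD-sm-es} instead of \eqref{eq:TD-sm5}. For instance, with $s=0$, the embedding $L^p\hookrightarrow B^0_{p,\infty}$ gives
\[
\PPr\sup_j 2^{2j}\|\Delta_jF^{(2)}\|_{L^1_t(L^p)}\lesssim e^{C\int_0^t\|\nabla u\|_{L^\infty}}\|\partial_{\WW_0}\Gamma_0\|_{L^p}.
\]
Bernstein's inequality then makes $\sum_j 2^{j\gamma}\|\Delta_jF^{(2)}\|_{L^1_t(L^\infty)}$ finite exactly when $\gamma+\frac3p<2$.

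The paper does the same thing in two cases:
\begin{enumerate}
\item[(i)] For $\frac{3}{2-\gamma}<p\le 3$, it uses $L^p\hookrightarrow B^{\gamma-1}_{3,1}$ (this embedding is where the hypothesis enters), applies \eqref{TD-sm-es} in $L^3$, and pays $2^j$ in Bernstein.
\item[(ii)] For $p>3$, it writes $\partial_{\WW_0}\Gamma_0=\mathrm{div}(\WW_0\Gamma_0)\in B^{-1}_{\infty,\infty}$, using $\nabla u_0\in L^\infty$. There \eqref{eq:TD-sm5} loses nothing, since the integrability is already $L^\infty$.
\end{enumerate}
$F^{(1)}$ has no such problem: its forcing can be measured in $B^{-1}_{\infty,\infty}$, and the gain $B^{-1}_{\infty,\infty}\to B^1_{\infty,\infty}\hookrightarrow B^\gamma_{\infty,1}$ costs no integrability.

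\textbf{Smaller points.} \eqref{eq:prod-es4} is stated only for $\sigma>-1$, so it does not literally cover $\nabla^3\Lambda^{-4}$ and $\nabla\Lambda^{-2}$ with $\sigma=-1$. The paper instead bounds $\nabla\partial_\WW(\nabla^3\Lambda^{-4}\theta)$ in $B^{\gamma-1}_{\infty,\infty}$. It splits this into a $\nabla\WW\cdot\nabla$ term, handled by \eqref{eq:prod-es}, plus $\partial_\WW(\nabla^4\Lambda^{-4}\theta)$, to which \eqref{eq:prod-es4} applies with $\sigma=0$, $s=\gamma-1$.

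The paper also simplifies the Gr\"onwall bookkeeping by invoking the already established bound \eqref{eq:W-W1inf-bdd}. Proposition~\ref{prop:W2} applies because $W^{2,p}\hookrightarrow W^{1,r}$ for some $r>3$. With that bound, every forcing term is controlled outright, including $\partial_\WW[\mathcal R_{-1},u\cdot\nabla]\theta=\mathrm{div}(\WW\,[\mathcal R_{-1},u\cdot\nabla]\theta)$, which needs no striated information on $\theta$ or $u$. Then $\|\WW\|_{C^{1,\gamma}}$ enters only through the transport term. Your variant, with $\|\WW\|_{C^{1,\gamma}}$ appearing linearly throughout, also closes once the $F^{(2)}$ step is repaired.
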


\begin{proof}[Proof of Proposition \ref{prop:C2gam}]
Recall that $\nabla \mathcal{W} = \{\nabla W^i\}_{1\leq i\leq 5}$ 
satisfies the equation \eqref{eq:nabW}.
Using \eqref{eq:T-sm2}, the embedding $L_t^1(C^\gamma)\hookrightarrow 
\widetilde{L}_t^1(C^\gamma)$ 
and the inequality $\|fg\|_{C^\gamma}\leq C\|f\|_{C^\gamma}\|g\|_{C^\gamma}$, one can get
\begin{equation}\label{eq:nabW-Cgam}
\begin{aligned}
  \|\nabla \WW(t)\|_{C^\gamma} \leq & C \left\|\nabla \WW_0\right\|_{C^\gamma}
  + C \int_0^t\left\|\partial_\WW \nabla u(\tau) \right\|_{C^\gamma} \mathrm{d} \tau
  + C \int_0^t\|\nabla u(\tau)\|_{C^\gamma}
  \|\nabla \WW(\tau)\|_{C^\gamma} \mathrm{d} \tau .
\end{aligned}
\end{equation}
The main goal is to bound $\partial_\WW \nabla u$ in $L^1_t(C^\gamma)$. 
Using the identity \eqref{eq.v.GamThe0},
we find
\begin{align}\label{eq:parWu-Cgam}
  \|\partial_\WW \nabla u\|_{L^1_t (C^\gamma)} 
  \leq \big\|\partial_\WW \nabla^2 \Lambda^{-2} \Gamma\big\|_{L^1_t(C^\gamma)}
  + \big\|(\partial_\WW \nabla^3 \Lambda^{-4} \theta ,
  \partial_\WW \nabla \Lambda^{-2} \theta) \big\|_{L^1_t(C^\gamma)}.
\end{align}
%Note that the Section 3.3 of \cite{CMX22} have provided the estimate for $\alpha=1$ and $\mathbf{D}=\RR^2$,
%but in order to clarify the dependence of parameter $\varepsilon$ and deal with the case for $\mathbf{D}=\TT^2$,
%we here sketch the proof by using a similar way.
We first consider the second term on the right-hand side of \eqref{eq:parWu-Cgam}.
Taking advantage of estimates \eqref{eq:prod-es}, \eqref{eq:prod-es4}, \eqref{eq:W-W1inf-bdd} and \eqref{eq:parWthe-Bes2} 
(with $\gamma$ in place of $\mu$), we infer that
\begin{align*}
  & \big\|\partial_\WW\big(\nabla^3 \Lambda^{-4} \theta\big)\big\|_{L_t^1\left(C^\gamma\right)} \nonumber \\
  &\leq C \big\|\Delta_{-1} \partial_\WW\big(\nabla^3 \Lambda^{-4}
  \theta\big)\big\|_{L_t^1\left(L^2\right)}
  + C \big\|\nabla \partial_\WW\big(\nabla^3 \Lambda^{-4} \theta\big)
  \big\|_{L_t^1\left(B_{\infty, \infty}^{\gamma-1}\right)} \nonumber \\
  &\leq C \|\WW\|_{L_t^{\infty}\left(L^{\infty}\right)}\|\theta\|_{L_t^1\left(L^2\right)}
  + C \|\nabla \WW\|_{L^\infty_t(L^{\infty})} \big\|\nabla^4 \Lambda^{-4}
  \theta\big\|_{L^1_t(B_{\infty, \infty}^{\gamma-1})} 
  + C \big\| \partial_\WW\big(\nabla^4 \Lambda^{-4} \theta\big)
  \big\|_{L_t^1\left(B_{\infty, \infty}^{\gamma-1}\right)} \nonumber \\
  &\leq C \|\mathcal{W}\|_{L^\infty_t(W^{1,\infty})} \|\theta\|_{L^1_t(L^2\cap L^\infty)} 
  + C \left\|\partial_\WW \theta\right\|_{L_t^1\left(B_{\infty, \infty}^{\gamma-1}\right)} 
  \leq C e^{\exp\{Ct\}} .
\end{align*}
The estimate for $\partial_\WW\big(\nabla\Lambda^{-2} \theta\big)$ is similar, which altogether yields
\begin{align}\label{es:ParWtheta0}
  \big\|\big(\partial_\WW\nabla^3 \Lambda^{-4} \theta,
  \partial_\WW \nabla\Lambda^{-2} \theta\big)\big\|_{L_t^1\left(C^\gamma\right)} 
  \leq C e^{\exp\{Ct\}} .
\end{align}
For the first term on the right-hand side of \eqref{eq:parWu-Cgam}, 
thanks to the striated estimate \eqref{eq:prod-es4} (with $\sigma=0$), Proposition \ref{prop:ap-es2} and \eqref{eq:W-W1inf-bdd}, 
we obtain 
\begin{equation}\label{es:parWGa}
\begin{split}
  \big\|\partial_\WW\big(\nabla^2 \Lambda^{-2} \Gamma\big)\big\|_{L_t^1\left(C^\gamma\right)}
  & \leq C\left\|\partial_\WW \Gamma\right\|_{L_t^1\left(C^\gamma\right)}
  + C \|\WW\|_{L^\infty_t(W^{1, \infty})} \|\Gamma\|_{L^1_t(C^\gamma)}  \\
  & \leq C\left\|\partial_\WW \Gamma\right\|_{L_t^1\left(C^\gamma\right)} + C e^{\exp\{Ct\}} .
\end{split}
\end{equation}
We intend to obtain the control of $\partial_\WW \Gamma=\WW \cdot \nabla \Gamma$. 
Observe that $\partial_\WW \Gamma$ solves the following equation
\begin{align*}
  \partial_t\left(\partial_\WW \Gamma\right) + u \cdot \nabla\left(\partial_\WW \Gamma\right)
  & -\PPr\,\Delta\left(\partial_\WW \Gamma\right)
  =-\PPr\,\left[\Delta, \partial_\WW\right] \Gamma
  +\partial_\WW\left(\left[\mathcal{R}_{-1}, u \cdot \nabla\right] \theta \right) +\partial_\WW\left(\Omega\cdot \nabla u \right) \\
  & = -\PPr\,\Delta \WW \cdot \nabla \Gamma-2\PPr\, \nabla \WW: \nabla^2 \Gamma
  + \partial_\WW\left(\left[\mathcal{R}_{-1}, u \cdot \nabla\right] \theta\right)+ \partial_\WW\left(\Omega\cdot \nabla u\right),
\end{align*}
with initial data $\partial_\WW \Gamma|_{t=0}=\partial_{\WW_0} \Gamma_0$.

%\textbf{(i). $\frac{3}{2-\gamma}<p\leq3$, let $r>3$ satisfies $W^{2,p}(\R^3)\subset W^{1,r}(\R^3)$.}

For convenience, we decompose $\partial_W\Gamma$ as
\begin{equation*}
 \partial_W\Gamma = F^{(1)} + F^{(2)},
\end{equation*}
where $F^{(1)}$ and $F^{(2)}$ respectively solve 
\begin{equation}\label{eq:F1}
\left\{
\begin{aligned}
\partial_t F^{(1)} + u\cdot\nabla F^{(1)} - \Pr \Delta F^{(1)}
 &= - \Pr \Delta W \cdot \nabla \Gamma
    - 2\Pr \nabla W : \nabla^2 \Gamma  \\
 &\quad + \partial_W \bigl([{\mathcal R}_{-1},u\cdot\nabla]\theta\bigr)
    + \partial_W (\Omega \cdot \nabla u), \\
F^{(1)}\big|_{t=0} &= 0 ,
\end{aligned}
\right.
\end{equation}
and
\begin{equation}\label{eq:F2}
\begin{aligned}
  \partial_t F^{(2)} + u\cdot\nabla F^{(2)} - \Pr \Delta F^{(2)} = 0, \quad
  F^{(2)}\big|_{t=0} = \partial_{W_0}\Gamma_0 .
\end{aligned}
\end{equation}
Thanks to the smoothing estimate \eqref{eq:TD-sm5} in Lemma~\ref{lem:TD-sm2}, 
we find that 
\begin{align*}
  & \PPr \, \Big(\sup_{j\in \mathbb{N}} 2^j  \|\Delta_j F^{(1)}\|_{L^1_t(L^\infty)} \Big)
  + \sqrt{\PPr}\, \Big(\sup_{j\in \mathbb{N}} \|\Delta_j F^{(1)}\|_{L^2_t(L^\infty)} \Big) \\
  & \leq Ce^{C\int_0^t\|u(\tau)\|_{B^1_{\infty,1}} \dd\tau}
  \bigg(\PPr \|\Delta \WW \cdot \nabla \Gamma\|_{L^1_t(B_{\infty, \infty}^{-1})}
  + \PPr \left\|\nabla \WW: \nabla^2 \Gamma\right\|_{L^1_t(B_{\infty,\infty}^{-1})}  \\
  &\quad + \big\|\partial_\WW\left[\mathcal{R}_{-1}, u \cdot \nabla\right] 
  \theta\big\|_{L^1_t(B^{-1}_{\infty,\infty})} 
  + \big\|\partial_{\mathcal{W}}(\Omega\cdot \nabla u) 
  \big\|_{L_t^1\left(B_{\infty, \infty}^{-1}\right)}\bigg) .
\end{align*}
By virtue of the product estimate \eqref{eq:prodBes-endp} 
and bounds \eqref{es:nabu-Gam3}, \eqref{eq:W-W1inf-bdd}, we have
\begin{align*}
  \|\Delta \mathcal{W}\cdot\nabla \Gamma\|_{L^1_t(B^{-1}_{\infty,\infty})}
  & \leq C \|\Delta \mathcal{W} \|_{L^\infty_t(B^{-1}_{\infty,\infty})} 
  \|\nabla \Gamma\|_{L^1_t(B^0_{\infty,1})} \\
  & \leq C \|\mathcal{W}\|_{L^\infty_t(W^{1,\infty})} 
  \|\Gamma\|_{L^1_t(B^1_{\infty,1})} \leq C e^{\exp\{Ct\}},
\end{align*}
and
\begin{align*}
  \|\nabla \mathcal{W}:\nabla^2\Gamma\|_{L^1_t(B^{-1}_{\infty,\infty})}
  & = \| \mathrm{div}\,(\nabla \mathcal{W} \, \nabla\Gamma)\|_{L^1_t(B^{-1}_{\infty,\infty})} 
  \leq \|\nabla \mathcal{W}\, \nabla\Gamma\|_{L^1_t(L^\infty)} \\
  & \leq \|\mathcal{W}\|_{L^\infty_t(W^{1,\infty})} 
  \|\Gamma\|_{L^1_t(W^{1,\infty})} \leq C e^{\exp\{Ct\}}.
\end{align*}
Using estimates \eqref{eq:R-1cm-es1} (with $q=6$), \eqref{eq:u-Gam-es2}
together with the embedding $L^\infty(\mathbb{R}^3)\hookrightarrow 
B^{-1/2}_{\infty,\infty}(\mathbb{R}^3)$ and $\dot{H}^1(\mathbb{R}^3) 
\hookrightarrow L^6(\mathbb{R}^3)$, we deduce that
\begin{align*}
\left\|\left[\mathcal{R}_{-1}, u \cdot \nabla\right]
  \theta\right\|_{L_t^1 \big(B_{\infty, 1}^0 \big)}
  &\leq  C\|[\mathcal{R}_{-1},u\cdot\nabla]\theta 
  \|_{L^1_t\big(B^{\frac{1}{2}}_{\infty,\infty}\big)} \\
  &\leq  C  \Big(\|\nabla u\|_{L^1_t(L^\infty)} 
  \|\theta\|_{L^\infty_t\big(B^{-\frac{1}{2}}_{\infty,\infty}\cap L^2\big)}
  + \|u\|_{L^1_t(L^6)} \|\theta\|_{L^\infty_t(L^{\frac{6}{5}})} \Big) \\
  &\leq  C  \Big(\|\nabla u\|_{L^1_t(L^\infty)} 
  \|\theta\|_{L^\infty_t (L^\infty\cap L^2 )}
  + \|\nabla u\|_{L^1_t(L^2)} \|\theta\|_{L^\infty_t(L^{\frac{6}{5}})} \Big) \\
  &\leq C  e^{\exp\{Ct\}},
\end{align*}
which, combined with estimate \eqref{eq:W-Linf-es-2}, leads to
\begin{align*}
  \left\|\partial_\WW\left(\left[\mathcal{R}_{-1}, u \cdot \nabla\right] \theta\right)
  \right\|_{L_t^1(B_{\infty, \infty}^{-1})}
  & = \|\mathrm{div}\,(\mathcal{W}\,[\mathcal{R}_{-1},u\cdot\nabla]\theta)\|_{L^1_t(B^{-1}_{\infty,\infty})} \\
  & \leq  C  \|\WW\|_{L_t^\infty(L^\infty)} \left\|\left[\mathcal{R}_{-1}, u \cdot \nabla\right]
  \theta\right\|_{L_t^1 (B_{\infty, 1}^0)} \leq  C e^{\exp\{Ct\}}.
\end{align*}
%In light of \eqref{eq.v.GamThe0}, \eqref{es:nabu-Gam2} together with the embedding
%$\widetilde{L}^2_t(B^1_{r,\infty}) \hookrightarrow L^2_t(B^{\gamma'-1}_{\infty,1})$ for every $\gamma'\in (1,2-\frac{3}{r})$,
%we find
%\begin{align}\label{es.F12nv}
%  \|\nabla u\|_{L^2_t(B^{\gamma'-1}_{\infty,1})} 
%  &\leq \|\Lambda^{-2}\nabla \nabla\wedge \Gamma\|_{L^2_t(B^{\gamma'-1}_{\infty,1})}
%  + \|\nabla^2 \partial_3 \Lambda^{-4}\theta \|_{L^2_t(B^{\gamma'-1}_{\infty,1})} 
%  + \|\nabla \Lambda^{-2}\theta\|_{L^2_t(B^{\gamma'-1}_{\infty,1})} \nonumber \\
%  &\lesssim  \| \Gamma\|_{L^2_t(L^r\cap B^{\gamma'-1}_{\infty,1})} + \| \Lambda^{-1} \theta\|_{L^2(L^p)} 
%  + \|\theta\|_{L^2_t(B^{\gamma'-2}_{\infty,1})} \nonumber \\
%  &\lesssim  \| \Gamma\|_{\widetilde{L}^2_t(B^1_{r,\infty})} + \| \theta\|_{L^2_t (L^{\frac{3r}{r+3}})} 
%  + \|\theta\|_{L^2_t(L^\infty)} \nonumber \\
%  & \lesssim e^{\exp\{Ct\}}.
%\end{align}
From \eqref{es:nabu-Gam3} (noting that even for $\frac{3}{2-\gamma}<p\leq3$, there exists
$r>3$ so that $W^{2,p}(\R^3) \hookrightarrow W^{1,r}(\R^3)$) and \eqref{eq:W-Linf-es-2}, we similarly get
\begin{align*}
  \left\|\partial_\WW\left(\Omega\cdot\nabla u\right) 
  \right\|_{L^1_t(B_{\infty, \infty}^{-1})}
  & = \|\mathrm{div}\,(\mathcal{W}\, \Omega\cdot\nabla u)
  \|_{L^1_t(B^{-1}_{\infty,\infty})} \\
  &\leq C\|\WW\|_{L^\infty_t(L^\infty)} 
  \left\|\Omega\cdot\nabla u\right\|_{L^1_t(L^\infty)} \\
  & \leq C\|\WW\|_{L^\infty_t(L^\infty)} 
  \left\|\nabla u\right\|^2_{L^2_t(L^\infty)} 
  \leq C e^{\exp\{Ct\}}.
\end{align*}
Collecting the above estimates and \eqref{eq:u-Gam-es2} yields
\begin{align*}
  \Big(\sup_{j\in \mathbb{N}} 2^j  \|\Delta_j F^{(1)}\|_{L^1_t(L^\infty)} \Big)
  +  \frac{1}{\sqrt{\PPr}}\, \Big(\sup_{j\in \mathbb{N}} \|\Delta_j F^{(1)}\|_{L^2_t(L^\infty)} \Big)
  \leq C e^{\exp\{Ct\}},
\end{align*}
where $C>0$ depends on $\PPr_*$ but is independent of $\PPr$, which immediately implies 
\begin{align}\label{eq:F1-es}
  \sum_{j\in \mathbb{N}} 2^{j\gamma}  \|\Delta_j F^{(1)}\|_{L^1_t(L^\infty)} 
  +  \frac{1}{\sqrt{\PPr}}\,\sum_{j\in \mathbb{N}} 2^{j(\gamma-1)}
  \|\Delta_j F^{(1)}\|_{L^2_t(L^\infty)} \leq C e^{\exp\{Ct\}}.
\end{align}
For the estimation of $F^{(2)}$ satisfying \eqref{eq:F2}, 
according to \eqref{TD-sm-es}-\eqref{eq:TD-sm5} in Lemma \ref{lem:TD-sm2}, 
we see that if $\frac{3}{2-\gamma}< p \leq 3$,
\begin{align*}
  \PPr \, \sum_{j\in\mathbb{N}} 2^{j(\gamma+1)} \|\Delta_j F^{(2)}\|_{L^1_t(L^3)}
  + \sqrt{\PPr}\, \sum_{j\in\mathbb{N}} 2^{j\gamma} \|\Delta_j F^{(2)}\|_{L^2_t(L^3)}
  \leq C\|\partial_{\WW_0}\Gamma_0\|_{B^{\gamma-1}_{3,1}}e^{C\int_0^t\|\nabla u\|_{L^\infty}\dd\tau},
\end{align*}
and if $p>3$,
\begin{align*}
  \PPr \, \Big(\sup_{j\in \mathbb{N}} 2^j  \|\Delta_j F^{(1)}\|_{L^1_t(L^\infty)} \Big)
  + \sqrt{\PPr}\, \Big(\sup_{j\in \mathbb{N}} \|\Delta_j F^{(1)}\|_{L^2_t(L^\infty)} \Big) 
  \leq C \|\partial_{\mathcal{W}}\Gamma_0\|_{B^{-1}_{\infty,\infty}} e^{C \int_0^t \|u\|_{B^1_{\infty,1}} \dd \tau}.
\end{align*}
In view of the relation $\Gamma_0=\Omega_0+\mathcal{R}_{-1} \theta_0$, the product estimate \eqref{eq:prod-es} and the embedding 
\begin{align*}
  \textrm{$L^p(\mathbb{R}^3) \hookrightarrow B_{3, 1}^{\gamma-1}(\mathbb{R}^3)$\quad for \,$\tfrac{3}{2-\gamma} <p \leq 3$},
  \quad\quad
  \textrm{$W^{1,p}(\mathbb{R}^3) \hookrightarrow B_{\infty,1}^0(\mathbb{R}^3)$\quad for \,$p > 3$},
\end{align*}
we infer that if $\frac{3}{2-\gamma}< p \leq 3$,
\begin{align*}
  \left\|\partial_{\WW_0} \Gamma_0\right\|_{B_{3,1}^{\gamma-1}}
  & \leq\left\|\partial_{\WW_0} \nabla u_0\right\|_{B_{3,1}^{\gamma-1}}
  + \left\|\partial_{\WW_0} \mathcal{R}_{-1} \theta_0\right\|_{B_{3,1}^{\gamma-1}} \\
   & \leq C \left\|\partial_{\WW_0} \nabla u_0\right\|_{L^p}
  + C \left\|\partial_{\WW_0} \mathcal{R}_{-1} \theta_0\right\|_{L^p} \\
  & \leq  C \left\|\WW_0\right\|_{L^{\infty}}\left\|\nabla^2 u_0\right\|_{L^p}
  + C\left\|\WW_0\right\|_{L^\infty}\left\|\nabla \mathcal{R}_{-1} \theta_0\right\|_{L^p} \\
  & \leq C \left\|\WW_0\right\|_{L^\infty}
  \left\|u_0\right\|_{W^{2, p}} + C\left\|\WW_0\right\|_{L^{ \infty}}\left\|\theta_0\right\|_{L^p}<\infty ,
\end{align*}
and if $p > 3$,
\begin{align*}
  \left\|\partial_{\WW_0} \Gamma_0\right\|_{B_{\infty, \infty}^{-1}}
  & \leq\left\|\partial_{\WW_0} \nabla u_0\right\|_{B_{\infty, \infty}^{-1}} 
  + \left\|\partial_{\WW_0} \mathcal{R}_{-1} \theta_0\right\|_{B_{\infty, \infty}^{-1}} \\
  & \leq C \left\|\WW_0 \, \nabla u_0\right\|_{L^{\infty}}
  + C \|\WW_0 \, \mathcal{R}_{-1}\theta_0 \|_{L^\infty} \\
  & \leq C\left\|\WW_0\right\|_{L^\infty}
  \left\|u_0\right\|_{W^{2, p}} + C\left\|\WW_0\right\|_{L^{ \infty}}\left\|\theta_0\right\|_{L^2 \cap L^{\infty}}<\infty .
\end{align*}
Consequently, based on \eqref{eq:u-Gam-es2} and the above estimates, we have
\begin{equation}\label{eq:F2-es}
\begin{split}
  & \sum_{j\in\mathbb{N}}2^{j\gamma} \|\Delta_j F^{(2)}\|_{L^1_t(L^\infty)}
  +  \frac{1}{\sqrt{\PPr}} \sum_{j\in\mathbb{N}} 2^{j(\gamma-1)} 
  \|\Delta_j F^{(2)}\|_{L^2_t(L^\infty)} \\
  & \leq 
  \begin{cases}
    C \sum\limits_{j\in\mathbb{N}} 2^{j(\gamma+1)} \|\Delta_j F^{(2)}\|_{L^1_t(L^3)}
    + \frac{C}{\sqrt{\PPr}} \sum\limits_{j\in\mathbb{N}} 2^{j\gamma} 
    \|\Delta F^{(2)}\|_{L^2_t(L^3)},\quad & \textrm{if}\;\; \frac{3}{2-\gamma}< p\leq 3, \\
    C \sup\limits_{j\in\mathbb{N}} 2^j \|\Delta_j F^{(2)}\|_{L^1_t(L^\infty)}
    + \frac{C}{\sqrt{\PPr}} \sup\limits_{j\in\mathbb{N}} 
    \|\Delta F^{(2)}\|_{L^2_t(L^\infty)},\quad & \textrm{if}\;\; p> 3,
  \end{cases} \\
  & \leq C e^{\exp\{Ct\}}.
\end{split}
\end{equation}
Hence, noting that 
\begin{align*}
  \|\Delta_{-1}\partial_{\mathcal{W}}\Gamma\|_{L^1_t\cap L^2_t(L^\infty)} 
  & \leq C (1+t)\|\Delta_{-1}\mathrm{div}\,(\mathcal{W}\,\Gamma)\|_{L^\infty_t (L^p)} \\
  & \leq C (1+t) \|\mathcal{W}\|_{L^\infty_t(L^\infty)} \|\Gamma\|_{L^\infty_t(L^p)} 
  \leq C e^{\exp\{Ct\}},
\end{align*}
and in combination with \eqref{eq:F1-es} and \eqref{eq:F2-es}, we have
\begin{equation}\label{es:parW-Gam2}
\begin{split}
  \left\|\partial_\WW \Gamma\right\|_{L_t^1\big(B_{\infty, 1}^{\gamma}\big)}
  & + \frac{1}{\sqrt{\PPr}}\left\|\partial_\WW \Gamma\right\|_{L_t^2\big(B_{\infty, 1}^{\gamma-1}\big)} 
  \leq C \left\|\partial_\WW \Gamma\right\|_{\widetilde{L}_t^1\big(B_{\infty, 1}^{\gamma}\big)} 
  + \frac{C}{\sqrt{\PPr}}\left\|\partial_\WW \Gamma\right\|_{\widetilde{L}_t^2\big(B_{\infty, 1}^{\gamma-1}\big)} \\
  &\leq   C \left\|\Delta_{-1}(\partial_\WW \Gamma)\right\|_{L_t^1 \cap L_t^2(L^\infty)} 
  + C \sum_{j\in \mathbb{N}} 2^{j\gamma} \|(\Delta_j F^{(1)},
  \Delta_j F^{(2)}\|_{L^1_t(L^\infty)} \\
  & \quad + \frac{C}{\sqrt{\PPr}} \sum_{j\in\mathbb{N}} 
  2^{j(\gamma-1)} \|(\Delta_j F^{(1)},\Delta_j F^{(2)})\|_{L^2_t(L^\infty)} \\
  & \leq C e^{\exp\{C t\}}.
\end{split}
\end{equation}

%\begin{remark}
%Here we cannot get
%\begin{align*}
%  \varepsilon\|W(t)\|_{L^\infty} \leq  C e^{\exp(Ct)},
%  \end{align*}
%which implies we cannot get
%\begin{align*}
%  & \varepsilon\|W\|_{L_T^{\infty}\left(\mathcal{C}_W^{ 1 +\gamma,0}\right)}
%  +\varepsilon\|\nabla u\|_{L_T^1\left(\mathcal{C}_W^{\gamma, 1}\right)}
%  +\varepsilon\|\Gamma\|_{L_T^1\left(\mathcal{B}_W^{\gamma^{\prime},1}\right)} \leq C e^{\exp\,\exp (CT)}.
%\end{align*}
%\end{remark}d
%Notice that by virtue of \eqref{es:nabu-Gam2-2},
%\begin{align}\label{es:Gamma1+}
%  \|\Gamma\|_{L_t^1\left(B_{\infty, 1}^{\gamma^{\prime}+1}\right)}
%  \leq C\|\Gamma\|_{\widetilde{L}_t^1\left(B_{p, \infty}^2\right)} \leq Ce^{CE_\varepsilon(t)}.
%\end{align}
Therefore, gathering \eqref{eq:nabW-Cgam}-\eqref{es:parWGa} and \eqref{es:parW-Gam2}
yields
\begin{align*}
  &\|\WW(t)\|_{C^{1,\gamma}}  + \|\partial_\WW \Gamma\|_{L_t^1(B_{\infty, 1}^{\gamma})}
  + \frac{1}{\sqrt{\PPr}}\left\|\partial_\WW \Gamma
  \right\|_{L_t^2\big(B_{\infty, 1}^{\gamma-1}\big)} 
  + \left\|\partial_\WW \nabla u\right\|_{L_t^1\left(C^\gamma\right)} \\
  & \leq C e^{\exp\{Ct\}} + C \int_0^t \|\WW(\tau)\|_{C^{1,\gamma}}
  \|\nabla u(\tau)\|_{C^\gamma} \dd\tau.
\end{align*}
Gr\"onwall's inequality and estimate \eqref{es:nabu-Gam4} guarantee that
\begin{equation}\label{eq:W-C2+gam-2}
\begin{split}
  & \|\WW\|_{L_T^{\infty}\left(C^{1,\gamma}\right)} 
  +\left\|\partial_\WW \Gamma\right\|_{L_t^1\big(B_{\infty, 1}^{\gamma}\big)}
  + \frac{1}{\sqrt{\PPr}}\|\partial_\WW \Gamma\|_{L_t^2(B_{\infty, 1}^{\gamma-1})} 
  +\left\|\partial_\WW \nabla u\right\|_{L_T^1\left(C^\gamma\right)} \\
  &\leq C e^{\exp\{CT\}} \exp\Big\{ C \|\nabla u\|_{L^1_T(C^\gamma)} \Big\} 
  \leq  C e^{\exp\{CT\}},
\end{split}
\end{equation}
where $C>0$ is independent of $\PPr$. This concludes the desired result \eqref{eq:targ5} 
and we complete the proof for the uniform-in-$\PPr$ persistence of the 
$C^{2,\gamma}$-boundary regularity of the temperature patch.
\end{proof}
%%%%%%%%%%%%%%%%%%%%%%%%%%%%%%%%%
\section{Infinite Prandtl number limit}\label{sec:limit}

This section is devoted to the proof of Theorem \ref{thm:limit}.
As a starting point, we present the following convergence result from the 3D Boussinesq system \eqref{eq:BousEq}
to the 3D Stokes-transport system \eqref{eq:ST} associated with the general well-prepared initial data.
\begin{proposition}\label{prop:weak-limit}
%Let $E_6$ be the subspace of probability measures in $L^6(\DD)$, which implies that $E_6$ is a subspace of $L^1\cap L^6(\DD)$. Assume that $(u_0, \theta_0) \in (H^1 \times E_6)(\DD)$ satisfies $\nabla\cdot u_0=0$.
Let $T>0$ be any given. %the constant given by \eqref{eq:data-cond}. 
For every $\mathrm{Pr} \in [\Pr_*,\infty)$ with $\PPr_*>0$ large enough 
($\PPr_*$ can be chosen to be universal due to \eqref{eq:data-cond} and the strong convergence of $(u_0^{\PPr},\theta_0^{\PPr})$), 
consider $(u^\mathrm{Pr} , \theta^\mathrm{Pr})$ the unique global regular solution of the 3D Boussinesq system
\eqref{eq:BousEq} with initial data $(u_0^{\PPr}, \theta_0^{\PPr})\in H^1(\mathbb{R}^3)
\times \big(L^1\cap L^6(\mathbb{R}^3)\big)$.
In addition, assume that by passing $\PPr\rightarrow \infty$, $(u_0^{\PPr},\theta_0^{\PPr})$ 
strongly converges to $(u_0,\theta_0) \in H^1(\mathbb{R}^3)
\times \big(L^1\cap L^6(\mathbb{R}^3)\big)$ which satisfies the compatibility condition
$-\Delta u_0 + \nabla p_0 = \theta_0 e_3$ with some function $p_0$.

Then, as $\mathrm{Pr}\rightarrow \infty$, up to the extraction of a subsequence, 
$(u^\mathrm{Pr}, \theta^\mathrm{Pr})$
converges (weakly) to the global unique solution
$(u, \theta)\in C([0,T]; H^2\cap W^{2,6}(\mathbb{R}^3)) \times 
C([0,T]; L^1\cap L^6 (\mathbb{R}^3))$
of the 3D Stokes-transport system  \eqref{eq:ST} with initial data $\theta_0$. 
\begin{comment}
More precisely, $(u^\mathrm{Pr}, \theta^\mathrm{Pr})$
converges weakly to 
$(u, \theta)$ in $L^{3/2}([0,T]; W^{1,6}(\DD))\cap L^r([0,T]; W^{1,\infty}(\DD)) \times L^\infty([0,T];L^1\cap L^6)$ with $r\in[1,\,\frac43)$.
\end{comment}
\end{proposition}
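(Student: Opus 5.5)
The plan is to combine the uniform-in-$\PPr$ bounds of Proposition~\ref{prop:ap-es1} with a compactness argument, identify the limit equation, and then prove uniqueness for \eqref{eq:ST} so that the limit does not depend on the subsequence. Since $(u_0^{\PPr},\theta_0^{\PPr})$ converges strongly, the norms in \eqref{eq:data-cond} and in Proposition~\ref{prop:ap-es1} are bounded uniformly for $\PPr\ge\PPr_*$. Hence \eqref{es0.thet} and \eqref{eq:u-Gam-es2} give, uniformly in $\PPr$:
\[
\theta^{\PPr}\in L^\infty_T(L^1\cap L^6),\qquad \nabla u^{\PPr}\in L^\infty_T(L^2)\cap L^r_T(L^\infty),\quad r<\tfrac43,
\]
and, via $\dot H^1\hookrightarrow L^6$, also $u^{\PPr}\in L^\infty_T(L^6)$. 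The $L^2$ bound \eqref{es.v.L2} grows with $\PPr$, so I avoid it and work only with homogeneous quantities.

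After extracting a subsequence, $u^{\PPr}\rightharpoonup u$ weakly-$*$ in $L^\infty_T(\dot H^1)$ and $\theta^{\PPr}\rightharpoonup\theta$ weakly-$*$ in $L^\infty_T(L^6)$. To pass to the limit in $\mathrm{div}(u\theta)$ I need strong convergence of one factor.
\begin{itemize}
\item For $\theta$: $\partial_t\theta^{\PPr}=-\mathrm{div}(u^{\PPr}\theta^{\PPr})$ is bounded in $L^\infty_T(W^{-1,3})$ since $u\theta\in L^3$. Aubin--Lions (Lemma~\ref{lem:Aubin-Lions}) with $L^6\subset H^{-1}_{loc}$ (compact) then gives $\theta^{\PPr}\to\theta$ in $C([0,T];H^{-1}_{loc})$.
\item For $u$: it is bounded in $L^\infty_T(H^1_{loc})$, so the pairing $\int u^{\PPr}\theta^{\PPr}\cdot\nabla\varphi$ converges for every test function $\varphi$.
\end{itemize}
For the momentum equation, the term $\frac1{\PPr}(\partial_t u+u\cdot\nabla u)$ tested against a divergence-free $\varphi$ is $O(\PPr^{-1})$. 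Indeed $\frac1{\PPr}\int u\cdot\partial_t\varphi$ uses $u\in L^6$, and $\frac1{\PPr}\int u\otimes u:\nabla\varphi$ uses $u\otimes u\in L^3$. Both vanish in the limit, and the initial-data term $\frac1{\PPr}\int u_0^{\PPr}\cdot\varphi(0)$ vanishes too. Thus $-\Delta u+\nabla p=\theta e_3$ holds in $\mathcal D'$, and $(u,\theta)$ is a weak solution of \eqref{eq:ST}.

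For the regularity of the limit I use the Stokes law directly: $u=(-\Delta)^{-1}\mathbb P(\theta e_3)$. Since $\theta\in L^\infty_T(L^1\cap L^6)$, Calder\'on--Zygmund gives $\nabla^2u\in L^\infty_T(L^2\cap L^6)$, and Hardy--Littlewood--Sobolev gives $u\in L^\infty_T(L^2)$, using $\theta\in L^{6/5}$. So $u\in L^\infty_T(H^2\cap W^{2,6})$, hence Lipschitz. Renormalization (DiPerna--Lions) makes $\theta$ the Lagrangian transport and gives $\theta\in C([0,T];L^1\cap L^6)$; the identity $u=(-\Delta)^{-1}\mathbb P(\theta e_3)$ then yields $u\in C([0,T];H^2\cap W^{2,6})$.

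The main obstacle is uniqueness in this class. I would prove it by a Loeper-type argument rather than redo \cite{MS22}. Take two solutions and their flows $X_1,X_2$, and set $Q(t)=\int|X_1-X_2|^2\theta_0\,dx$ (for signed $\theta_0$, use $|\theta_0|$). Then $\dot Q$ is controlled by $\|\nabla u_1\|_{L^\infty}Q$ plus $\|u_1-u_2\|$ measured in $L^2(|\theta|)$. By the Stokes kernel, $u_1-u_2=K*(\theta_1-\theta_2)$ with $|\nabla K|\lesssim|x|^{-2}$, so the difference is bounded by $C\|\theta\|_{L^1\cap L^6}\,Q^{1/2}$ via a $\dot H^{-1}$/Wasserstein estimate. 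Gr\"onwall then gives $Q\equiv0$. This in turn identifies the whole family's limit and justifies the phrase ``unique solution'' in the statement.
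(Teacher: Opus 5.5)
Your compactness and limit passage are essentially the paper's proof:

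- the $\PPr$-uniform bounds of Proposition~\ref{prop:ap-es1};
- Aubin--Lions for $\theta^{\PPr}$ in $C([0,T];H^{-1}(B_R))$;
- the weak--strong pairing for $u^{\PPr}\theta^{\PPr}$, and the $O(\PPr^{-1})$ bound on the inertial terms;
- then the Lagrangian representation of $\theta$ and elliptic regularity.

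Testing with divergence-free fields to remove the pressure, and using an $L^\infty_T(H^1(B_R))$ bound instead of $L^{3/2}_T(W^{1,6})$, are harmless improvements. Two steps need fixing.

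\textbf{The $L^2$ bound on $u$.} Your claim that Hardy--Littlewood--Sobolev gives $u\in L^\infty_T(L^2)$ from $\theta\in L^{6/5}$ is false. The operator $(-\Delta)^{-1}\mathbb{P}(\cdot\,e_3)$ has order $-2$, so HLS maps $L^{6/5}$ into $L^6$; no $L^p\to L^2$ bound exists for an order $-2$ operator in $\R^3$. In fact the Stokeslet decays like $|x|^{-1}$, so $u\notin L^2(\R^3)$ whenever $\int\theta\neq0$. The $L^2$ part of the class has to come from hypotheses you never use: $u_0\in H^1$ together with the compatibility condition (these two together force $\int\theta_0=0$). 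Since $u(t)=(-\Delta)^{-1}\mathbb{P}(\theta(t)e_3)$ for every $t$ and $u(0)=u_0$, you have $u(t)=u_0-\int_0^t(-\Delta)^{-1}\mathbb{P}\big(\mathrm{div}(u\theta)\,e_3\big)\,\dd\tau$. The integrand is an order $-1$ operator applied to $u\theta$, so HLS applies correctly:
\[
\|u(t)\|_{L^2}\le\|u_0\|_{L^2}+C\int_0^t\|u\theta\|_{L^{6/5}}\,\dd\tau\le\|u_0\|_{L^2}+C\int_0^t\|u\|_{L^6}\|\theta\|_{L^{3/2}}\,\dd\tau .
\]
The paper's appeal to ``classical elliptic theory'' passes over the same point. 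Nothing else in your argument depends on this step: $\nabla u\in L^\infty$ already follows from $\nabla u\in L^2$ and $\nabla^2 u\in L^6$.

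\textbf{Uniqueness.} The paper does not prove uniqueness; it relies on \cite{MS22}. Your Loeper-type argument is a reasonable self-contained substitute, and the bound you state has the right form, but the tool you name does not produce it. Loeper's estimate $\|\theta_1-\theta_2\|_{\dot H^{-1}}\lesssim\max_i\|\theta_i\|_{L^\infty}^{1/2}W_2$ needs $L^\infty$ densities and $L^\infty$ control of the interpolated measures. Neither is available here: the data are only $L^1\cap L^6$, and $sX_1+(1-s)X_2$ interpolates two unrelated flows.

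The argument closes if you use the extra derivative in the Stokes kernel and avoid interpolation altogether:
\begin{enumerate}
\item By H\"older and Sobolev, $\int|u_1-u_2|^2|\theta_2|\le\|u_1-u_2\|_{L^{12/5}}^2\|\theta_0\|_{L^6}$ and $\|u_1-u_2\|_{L^{12/5}}\lesssim\|\theta_1-\theta_2\|_{\dot H^{-7/4}}$.
\item For $\phi\in\dot H^{7/4}$, use $|\phi(a)-\phi(b)|\lesssim|a-b|\big(M|\nabla\phi|(a)+M|\nabla\phi|(b)\big)$, with $M$ the maximal function. Combined with the fact that $X_1,X_2$ are measure-preserving and $\nabla\phi\in L^4$, this gives $\|\theta_1-\theta_2\|_{\dot H^{-7/4}}\lesssim\|\theta_0\|_{L^2}^{1/2}Q^{1/2}$.
\end{enumerate}
Gr\"onwall then applies as you intended. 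A Hauray-type estimate on $\sup|X_1-X_2|$ would also work, or you can simply cite \cite{MS22} as the paper does.
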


\begin{remark}
One can refer to recent work \cite[Theorem 2.1]{MS22} 
for the existence and uniqueness of the global regular solution for the 3D Stokes-transport system \eqref{eq:ST}
associated with initial data $\theta_0 \in L^p(\mathbb{R}^3)$, $p\geq 3$.
\end{remark}

\begin{proof}[Proof of Proposition~\ref{prop:weak-limit}]
Taking advantage of Proposition~\ref{prop:ap-es1}, the H\"older inequality and interpolation inequality, 
one can get that for every $\PPr\in [\PPr_*,\infty)$,
\begin{align*}
  \|\nabla u^\mathrm{Pr}\|_{L_T^{3/2}(L^6)}\leq C\|\nabla u^\mathrm{Pr} 
  \|_{L_T^{\infty}(L^2)}^{\frac13}\|\nabla u^\mathrm{Pr}\|_{L_T^{1}(L^\infty)}^{\frac23}\leq Ce^{CT},
\end{align*}
and 
\begin{align*}
  \| u^\mathrm{Pr}\|_{L_T^{3/2}(L^6)}\leq C\|\nabla u^\mathrm{Pr}\|_{L_T^{3/2}(L^2)}
  \leq CT^{\frac23}\|\nabla u^\mathrm{Pr}\|_{L_T^{\infty}(L^2)}\leq Ce^{CT},
\end{align*}
with $C>0$ independent of $\PPr$,
which implies $u^\mathrm{Pr}$ is uniformly bounded in $L_T^{3/2}(W^{1,6}(\mathbb{R}^3))$. 
We can also obtain that
$\theta^\mathrm{Pr}$ is uniformly bounded in $L_T^\infty(L^1\cap L^6(\DD))$.
Thus, up to extraction of subsequences, we have the weak (or weak-$*$) convergence of 
$(u^{\PPr}, \theta^{\PPr})$ to some function $(u,\theta)$ 
as $\mathrm{Pr}\rightarrow \infty$, that is,
\begin{align}
   u^\mathrm{Pr} \rightharpoonup u \quad\quad &\text{in}\;\; L^{3/2}_T\big(W^{1,6}(\DD)\big), \label{eq:u-weak-conv} \\
  \theta^\mathrm{Pr}\rightharpoonup^* \theta \quad\quad &\text{in}\;\; L_T^\infty \big(L^{2}(\DD)\big)~~~ 
  \text{and}~~~L_T^\infty \big(L^{6}(\DD)\big). \label{eq:the-weak-conv}
\end{align}
It follows from the lower semicontinuity of weak limit that
$u\in L^{3/2}_T(W^{1,6}(\DD))$ and $\theta\in L^{\infty}_T(L^{2}\cap L^6(\DD))$.
From the equation $\partial_t \theta^\mathrm{Pr} = -\mathrm{div}\,(u^\mathrm{Pr}\,\theta^\mathrm{Pr})$ and the uniform estimate
\begin{align*}
  \|\mathrm{div}\,(u^\mathrm{Pr}\,\theta^\mathrm{Pr})\|_{L^\infty_T(H^{-1}(B_R))} 
  \leq C \|u^\mathrm{Pr}\,\theta^\mathrm{Pr}\|_{L^\infty_T(L^2(B_R))}
  \leq C \|u^\mathrm{Pr}\|_{L^\infty_T(L^6(B_R))} \|\theta^\mathrm{Pr}\|_{L^\infty_T(L^3(B_R))} \leq C,
\end{align*}
we know that 
\begin{align*}
  \textrm{$\partial_t \theta^\mathrm{Pr}$\;\; is uniformly bounded in\;\; $L^\infty_T(H^{-1}(B_R))$\;\; for all $R>0$}.
\end{align*}
Since $L^{2}(B_R) \hookrightarrow H^{-1}(B_R)$ is compact for all $R>0$, 
we apply the Aubin-Lions lemma %or the Rellich compactness theorem 
(see Lemma \ref{lem:Aubin-Lions}) and a standard diagonal process to deduce the strong convergence of $\theta^{\PPr}$:
\begin{align}\label{eq:the-str-conv}
  \theta^\mathrm{Pr} \rightarrow \theta\quad \textrm{in}\;\; C\big([0,T]; H^{-1}(B_R)\big)~~~\text{for all}~R>0.
\end{align}

We only need to show the convergence of the nonlinear terms, %$u^\mathrm{Pr} \cdot \nabla \theta^\mathrm{Pr}$,
since the linear ones can be dealt with in a standard way.
For all $\zeta\in C_c^\infty(\DD\times[0,T])$, we have that as $\mathrm{Pr} \rightarrow \infty$,
\begin{align*}
  \frac{1}{\mathrm{Pr}} \Big|\int_0^T \int_{\DD} \big(u^\mathrm{Pr} \cdot\nabla u^\mathrm{Pr}\big) \zeta\dd x \dd t \Big|
  & \leq \frac{1}{\mathrm{Pr}} \|u^\mathrm{Pr}\|_{L^\infty_T(L^6)} 
  \|\nabla u^\mathrm{Pr}\|_{L^\infty_T(L^2)} \|\zeta\|_{L^1_T(L^3)} \\
  & \leq \frac{C }{\mathrm{Pr}} \|\nabla u^\mathrm{Pr}\|_{L^\infty_T(L^2)}^2 \|\zeta\|_{L^1_T(L^3)} \leq \frac{C}{\PPr} \rightarrow 0,
\end{align*}
and
\begin{align}\label{eq:converg-nonl}
  & \Big|\int_0^T \int_{\DD} \mathrm{div}\, (u^\mathrm{Pr}\,\theta^\mathrm{Pr})\, \zeta\, \dd x \dd t -
  \int_0^T \int_{\DD} \mathrm{div}\, (u\,\theta)\, \zeta\, \dd x \dd t  \Big| \nonumber \\
  & \leq \Big|\int_0^T \int_{\DD} (u^\mathrm{Pr} - u)\cdot \nabla \zeta \, \theta \, \dd x \dd t\Big|
  + \Big|\int_0^T \int_{\DD} u^\mathrm{Pr} \cdot \nabla \zeta\,(\theta^\mathrm{Pr} -\theta) \dd x \dd t \Big| \nonumber \\
  & \leq \Big|\int_0^T \int_{\DD} (u^\mathrm{Pr} - u)\cdot \nabla \zeta \, \theta \, \dd x \dd t\Big|
  + \|u^\mathrm{Pr}\cdot \nabla \zeta\|_{L^1_T(H_0^1(B_R))} \|\theta^\mathrm{Pr} -\theta\|_{L^\infty_T(H^{-1}(B_R))} \rightarrow 0,
\end{align}
where the convergence in \eqref{eq:converg-nonl} follows from \eqref{eq:u-weak-conv}, \eqref{eq:the-str-conv}
and the estimate that $$\|u^\mathrm{Pr}\cdot\nabla \zeta\|_{L^1_T(H_0^1(B_R))}
\leq C\|u^\mathrm{Pr}\|_{L^{3/2}_T(W^{1,6}(B_R))} \|\zeta\|_{L^3_T(W^{2,3}(B_R))} \leq C.$$
Hence, by letting $\mathrm{Pr}\rightarrow \infty$,
we can arrive at the limiting system and show that $(u,\theta)$ 
solves the 3D Stokes transport system \eqref{eq:ST} in the sense of distribution.

Referring to Proposition~\ref{prop:ap-es1}, $\nabla u^\mathrm{Pr}$ is uniformly bounded in $L_T^r(L^\infty)$ for $r\in(1,\frac43)$, 
one can get that $\nabla u^\mathrm{Pr} \rightharpoonup^* \nabla u~~~ \text{in}~~~ L^{r}_T\big(L^\infty(\DD)\big)$, 
which yields $\nabla u\in L_T^r(L^\infty)$. Let $X_t$ be the flow map generated by $u$, 
which means that $X_t$ solves
\begin{align}\label{eq:X_t}
  \frac{\dd}{\dd t} X_t(x) = u\big(X_t(x),t\big),\quad \mathrm{div}\,u =0,\quad
  X_t(x) |_{t=0} = x.
\end{align}
Taking advantage of Lemma~\ref{lem:flow}, the system \eqref{eq:X_t} has a unique solution
$X_t(\cdot):\DD \rightarrow \DD$ on $[0,T]$ which is a measure-preserving bi-Lipschitzian homeomorphism. 
Since $\theta$ solves the transport equation $\eqref{eq:ST}_1$ in the sense of distribution, 
we can infer that $\theta(t,x)=\theta_0(X_t^{-1}(x))$, which implies $\theta\in C([0,T]; L^1\cap L^6(\mathbb{R}^3))$. 
Since $u$ solves the second equation $\eqref{eq:ST}_2$ in the sense of distribution, 
we have $-\Delta u = \mathbb{P} (\theta e_3)$ with $\mathbb{P} = \mathrm{Id} - \nabla \Delta^{-1} \mathrm{div}$ 
the Leray projection operator, consequently, $u\in C([0,T]; H^2\cap W^{2,6}(\mathbb{R}^3))$ by the classical elliptic theory.
\begin{comment}
Finally, we show $\theta\in C(0,T; E_6)$, which directly implies that $u \in C(0,T; W^{2,6})$ since $u=(-\Delta)^{-1}\mathbb{P}\,\theta\, e_3$.
Moreover thanks to \cite{MS22}, we have for all
$0 \le t \le t' \le T$,
\begin{equation}\label{eq:W1-estimate}
\begin{aligned}
W_1\bigl(\theta(t),\theta(t')\bigr)
&\le \int_{\mathbb{R}^3} \bigl|X_{t'}(x)-X_t(x)\bigr|\,\theta_0(\dd x) \\
&\le (t'-t)\,\|u\|_{L^\infty(0,T;L^\infty(\mathbb{R}^3))}
   \,\|\theta_0\|_{L^1(\mathbb{R}^3)},
\end{aligned}\end{equation}
this shows that
$\theta \in C\bigl([0,T];\mathcal{P}(\mathbb{R}^3)\cap L^1(\mathbb{R}^3)\cap
L^6(\mathbb{R}^3)\bigr)$ for any $T>0$, which completes the proof of  $\theta\in C(0,T; E_6)$.
\end{comment}
\end{proof}

Now we give the proof of Theorem \ref{thm:limit}.
\begin{proof}[Proof of Theorem \ref{thm:limit}]
By applying Proposition \ref{prop:weak-limit}, we immediately get the convergence of $(u^{\PPr},\theta^{\PPr})$ solving system \eqref{eq:BousEq}
to the unique global solution $(u,\theta)$ of the 3D Stokes-transport system \eqref{eq:ST}, 
and also $\theta(x, t) = \overline{\theta}_0(X_t^{-1}(x)) \mathbf{1}_{D(t)}(x) $ is of the patch type. 
Below, we show the regularity persistence result of patch boundary $\partial D(t)$.

Thanks to Proposition \ref{prop:ap-es2}, we have
$u^\mathrm{Pr}\in L^r_T(C^{1,\gamma}(\DD))$ for $r\in [1,\frac{2p}{3+\gamma p})$
uniformly in $\mathrm{Pr}\in [\PPr_*,\infty)$, one can obtain that as $\PPr\rightarrow \infty$,
\begin{align*}
  \textrm{$u^\mathrm{Pr} \rightharpoonup^* u\quad \text{in}\;\; L^{r}_T\big(C^{1,\gamma}(\DD)\big)$}, 
\end{align*}
and the lower semicontinuity of weak limit also yields $u\in L_T^r(C^{1,\gamma})$.
Hence, using Lemma~\ref{lem:flow}, we find that the system \eqref{eq:X_t} has a unique solution
$X_t(\cdot):\DD \rightarrow \DD$ on $[0,T]$ that is a measure-preserving homeomorphism that satisfies both 
$X_t$ and its inverse $X_t^{-1}$ belong to $L^\infty_T(C^{1,\gamma}(\DD))$.
The regularity of $X_t^{\pm1}$ implies the global persistence of $C^{1,\gamma}$-regularity of $\partial D(t)$,
which yields the estimate of \eqref{eq:limit-targ-1}.

Next we turn to the proof of \eqref{eq:limit-targ-2}. Recalling \eqref{eq:Wi} and 
$\WW_0 = \{W^i_0\}_{1\leq i\leq 5} \in W^{1,\infty}(\DD)$ satisfying \eqref{W-i-0}, we have that $\WW^\mathrm{Pr}=\{W^{\PPr,i}\}_{1\leq i\leq 5}$ satisfies
\begin{equation*}
  \partial_t \WW^\mathrm{Pr} + u^\mathrm{Pr} \cdot\nabla \WW^\mathrm{Pr} = \WW^\mathrm{Pr}\cdot\nabla u^\mathrm{Pr} 
  = \partial_{\WW^\mathrm{Pr}}u^\mathrm{Pr}, \quad\quad \WW^\mathrm{Pr}|_{t=0}(x)=\WW_0(x),
\end{equation*}
Referring to \eqref{eq:W-W1inf-bdd} and \eqref{es:nabu-Gam3}, 
we find that $\WW^\mathrm{Pr}$ is uniformly bounded in 
$L_T^\infty(W^{1,\infty}(\DD))$ and $u^\mathrm{Pr}$ is uniformly bounded in 
$L_T^2(W^{1,\infty}(\DD))$ with respect to $\PPr\in [\PPr_*,\infty)$,
which implies that $\partial_t\WW^\mathrm{Pr}$ belongs to $L_T^2(L^\infty(\DD))$ uniformly in $\mathrm{Pr}$.
Since $C^\beta(B_R)\hookrightarrow L^{\infty}(B_R)$ with $0<\beta<1$ 
is compact for any $R>0$,
Aubin-Lions's lemma (Lemma~\ref{lem:Aubin-Lions}) yields that, up to the extraction of a subsequence,
\begin{align}\label{eq:W-conv}
  \WW^\mathrm{Pr}\rightarrow \WW\quad \textrm{in}\;\; 
  C\big([0,T];C^\beta(B_R)\big),\, 0<\beta<1 ~~~\text{ for all}~R>0.
\end{align}
By letting $\mathrm{Pr}\rightarrow \infty$ in the equation of $\WW^\mathrm{Pr}$, 
we infer that $\WW(x,t)$ solves the equation (in the sense of distribution)
%(in the sense of distribution)
\begin{equation}\label{eq:WW}
  \partial_t \WW + u \cdot\nabla \WW = \WW\cdot\nabla u = \partial_{\WW}u, \quad\quad \WW|_{t=0}(x)=\WW_0(x).
\end{equation}
Besides, it follows from the weak-$*$ limit of $\WW^\mathrm{Pr}$ that $\WW\in L_T^\infty(W^{1,\infty}(\DD))$.
In light of Lemma~\ref{lem:sr-cond}, we conclude the global persistence of the $W^{2,\infty}$-regularity of temperature patch boundary $\partial D(t)$.

Finally, we prove \eqref{eq:limit-targ-3}. According to \eqref{eq:W-C2+gam-2} and
\eqref{es:nabu-Gam3} and the embedding 
$W^{2,p}(\mathbb{R}^3)\hookrightarrow W^{1,r}(\mathbb{R}^3)$ for some $r>3$, 
we obtain that $\WW^\mathrm{Pr}$ is uniformly bounded in $L_T^\infty(C^{1,\gamma}(\DD))$ and 
$u^\mathrm{Pr}$ is uniformly bounded in $L_T^2(W^{1,\infty}(\DD))$ with respect to 
$\PPr\in [\PPr_*,\infty)$,
which implies that $\partial_t\WW^\mathrm{Pr}$ belongs to $L_T^2(L^\infty(\DD))$ uniformly in $\mathrm{Pr}$.
Since $C^{1,\gamma_1}(B_R)\hookrightarrow L^\infty(B_R)$ ($0<\gamma_1<\gamma$) 
is compact for any $R>0$,
Lemma~\ref{lem:Aubin-Lions} guarantees that, up to the extraction of a subsequence,
\begin{align}\label{eq:W-conv2}
  \WW^\mathrm{Pr}\rightarrow \WW\quad \textrm{in}\;\; 
  C\big([0,T];C^{1, \gamma_1}(B_R)\big),\, 
  0< \gamma_1<\gamma~~~~\text{ for all}~R>0.
\end{align}
It is clear that $\WW(x,t)$ solves \eqref{eq:WW} (in the sense of distribution).
Besides, it follows from the weak-$*$ limit of $\WW^\mathrm{Pr}$ that $\WW\in L_T^\infty(C^{1,\gamma}(\DD))$.
In view of Lemma~\ref{lem:sr-cond}, we conclude the global persistence of the 
$C^{2,\gamma}$-regularity of temperature patch boundary $\partial D(t)$.
Therefore, we finish the proof of Theorem \ref{thm:limit}.
\end{proof}

\noindent\textbf{Data availability}
Data sharing is not applicable to this article as no datasets were generated or analyzed during the current study.

\noindent\textbf{Conflicts of Interest} The authors declare that they have no conflicts of interest.

\end{document}